\documentclass{amsart}
\usepackage{amsmath,amssymb,amsthm,mathtools}
\usepackage{enumitem, tikz}
\usepackage{hyperref}

\usepackage{nicematrix}
\newtheorem{theorem}{Theorem}[section]
\newtheorem{lemma}[theorem]{Lemma}
\newtheorem{proposition}[theorem]{Proposition}
\newtheorem{corollary}[theorem]{Corollary}
\numberwithin{equation}{section}

\theoremstyle{definition}
\newtheorem{definition}[theorem]{Definition}
\newtheorem{example}[theorem]{Example}

\theoremstyle{remark}
\newtheorem{remark}[theorem]{Remark}

\newcommand{\Z}{\mathbb Z}
\newcommand{\R}{\mathbb R}
\newcommand{\Q}{\mathbb Q}
\newcommand{\cF}{\mathcal F}
\newcommand{\RZ}{\mathbb R\mathcal Z}

\newcommand{\one}{\mathbf 1}
\newcommand{\field}{\Bbbk}
\def\genus{\mathfrak g}

\DeclareMathOperator{\join}{\ast}
\DeclareMathOperator{\supp}{supp}
\DeclareMathOperator{\row}{row}
\DeclareMathOperator{\im}{im}
\DeclareMathOperator{\id}{id}
\DeclareMathOperator{\link}{lk}
\DeclareMathOperator{\rank}{rank}

\title{Symplectic small covers in dimension four}

\author[S. Choi]{Suyoung Choi}
\address{Department of mathematics, Ajou University, 206, World cup-ro, Yeongtong-gu, Suwon 16499, Republic of Korea}
\email{schoi@ajou.ac.kr}

\date{\today}
\thanks{This work was supported by the National Research Foundation of Korea Grant funded by the Korean Government (RS-2025-00521982).}

\keywords{small covers, real moment-angle manifolds, toric topology, symplectic four-manifolds, products of polygons, factor-compatible small covers}

\subjclass[2020]{Primary 57S12; Secondary 53D05, 57K43, 52B11}

\begin{document}

\begin{abstract}
    We study symplectic structures on four-dimensional small covers.
    Our main result shows that every symplectic four-dimensional small cover is aspherical.
    We then classify symplectic small covers over products of two polygons, proving that symplecticity is equivalent to factor-compatibility.
    We also classify them up to diffeomorphism.
    Finally, we construct a symplectic four-dimensional small cover whose orbit polytope is not combinatorially equivalent to a product of two polygons.
\end{abstract}

\maketitle


\section{Introduction}

\emph{Small covers}, introduced by Davis and Januszkiewicz~\cite{Davis-Januszkiewicz1991}, are real analogues of quasitoric manifolds.
They are closed manifolds equipped with locally standard $\Z_2^n$-actions whose orbit spaces are simple convex polytopes.
They form one of the basic classes of manifolds in toric topology, and their topology is controlled by a simple polytope together with a mod~$2$ characteristic map.
For general background on toric topology, we refer the reader to~\cite{Buchstaber-Panov2015book}.

From the viewpoint of symplectic geometry, small covers are often regarded as real or Lagrangian objects associated with toric geometry.
This point of view is natural, but it leaves open the question of when a small cover itself carries a symplectic structure.
A fundamental positive result in this direction is due to Ishida~\cite{Ishida2011}.
For \emph{real Bott manifolds}, equivalently small covers over cubes, he proved that cohomological symplecticity is equivalent to the existence of a symplectic structure.

The goal of the present paper is to study symplectic structures on four-dimensional small covers.
In dimension four, c-symplecticity is easy to detect.
For a closed connected smooth $4$-manifold, it is equivalent to orientability together with $b_2>0$.
Thus c-symplecticity is flexible, whereas symplecticity is much more rigid. 

The symplecticity problem for four-dimensional small covers is related to the geography of aspherical and hyperbolic four-manifolds.
For instance, small covers of the right-angled $120$-cell give many closed hyperbolic four-manifolds, but it is unknown whether any of them is symplectic.
LeBrun conjectured that closed hyperbolic four-manifolds have vanishing Seiberg--Witten invariants~\cite{LeBrun2002}; together with Taubes' non-vanishing theorem~\cite{Taubes1994}, this would rule out symplectic structures on such manifolds.
This viewpoint also motivates recent work on hyperbolic four-manifolds with vanishing Seiberg--Witten invariants~\cite{Agol-Lin2020,Stover2026}.

The main results of the paper may be summarized as follows.
Our first main result is a flagness obstruction.
We prove that if a four-dimensional small cover is symplectic, then the dual simplicial sphere of its orbit polytope must be flag.
Consequently, every symplectic four-dimensional small cover is aspherical.
See Corollary~\ref{cor:only_flag_support_symplectic_small}.

We also obtain a corresponding statement for real moment-angle manifolds.
If a four dimensional real moment-angle manifold $\RZ_K$ is symplectic, then either $K$ is flag or $K \cong \partial(P_3 \times P_{m-3})^\ast$.
Equivalently, apart from the exceptional case $\RZ_K \cong S^2 \times \Sigma$, where $\Sigma$ is a closed orientable surface, symplecticity forces $K$ to be flag.
See Theorem~\ref{thm:only_flag_support_symplectic_RZ}.

We then give a sharp classification for products of two polygons.
A small cover over a product of two polygons is symplectic if and only if it is factor-compatible.
See Theorem~\ref{thm-two-polygons-factor-compatible}.
We further enumerate the symplectic small covers over products of two polygons, and show that their diffeomorphism type is determiend by their mod~$2$ cohomology rings.
See Theorem~\ref{thm:two-stage-mod2-rigidity} and Corollary~\ref{cor:number_of_diffeo_types}.

Finally, we show that symplectic four-dimensional small covers are not exhausted by the examples over products of two polygons.
We construct a symplectic four-dimensional small cover whose orbit polytope is not combinatorially equivalent to a product of two polygons.
See Theorem~\ref{thm:q10-product-compatible-example}.

We also include a computer-assisted verification showing that, among small covers whose orbit polytopes have at most ten facets, every symplectic example arises from a product of two polygons and is factor-compatible.

\section{Preliminaries}\label{sec-preliminaries}

In this section we collect the basic definitions and facts that will be used throughout the paper.
Throughout the paper, we write $\Z_2=\Z/2\Z=\{0,1\}$.
For a field~$\field$ and a topological space~$X$, we write $b_i(X;\field)=\dim_\field H^i(X; \field)$.
When $\field=\Q$, we simply write $b_i(X)$.
All vector spaces over $\Z_2$ are written additively.

If $K$ is a simplicial complex on the vertex set $[m]=\{1,\dots,m\}$ and $\omega\subset [m]$, then $K_\omega$ denotes the full subcomplex of $K$
on the vertex set $\omega$.
Throughout the paper, we identify a subset of $[m]$ with its indicator vector in $\Z_2^m$, and also identify a vector $u\in\Z_2^m$ with its support
$$
    \supp(u)=\{i\in [m]\mid u_i=1\}.
$$
Thus $K_u$ means $K_{\supp(u)}$ for $u\in\Z_2^m$.
We also write $u^c$ for the complementary subset $[m]\setminus\supp(u)$, or equivalently for the vector $\one_m+u$.

\subsection{Small covers and characteristic maps} \label{subsec:smallcover}
Let $P$ be a simple polytope of dimension $n$, and let $\cF(P)=\{F_1,\dots,F_m\}$ be the set of its facets.
A mod~$2$ \emph{characteristic map} on $P$ is a map
$$
    \lambda \colon \cF(P)\to \Z_2^n
$$
such that whenever $F_{i_1}\cap \cdots\cap F_{i_k}\neq \varnothing$, the vectors $\lambda(F_{i_1}),\dots,\lambda(F_{i_k})$ are linearly independent over $\Z_2$.
We use the same symbol $\lambda$ for the associated matrix
$$
    \lambda= \begin{pmatrix} \lambda(F_1) & \cdots & \lambda(F_m) \end{pmatrix} \in M_{n\times m}(\Z_2).
$$
We also write $\row(\lambda)\subset \Z_2^m$ for the row space of $\lambda$.
With this convention, the defining condition means that the columns corresponding to the facets meeting at each vertex form a basis of $\Z_2^n$.

The \emph{small cover} $M(P,\lambda)$ over $P$ determined by $\lambda$ is defined by
$$
    M(P,\lambda):=(\Z_2^n\times P)/\sim,
$$
where $(g,p)\sim (h,q)$ if $p=q$ and $g+h\in G_{F(p)}$.
Here $F(p)$ denotes the unique face of $P$ whose relative interior contains $p$, and
$$
    G_{F}:=\langle \lambda(F_i)\mid F\subset F_i\rangle \subset \Z_2^n
$$
is the subgroup generated by the characteristic vectors of the facets containing $F$.
See \cite{Davis-Januszkiewicz1991} for details.

The group $\Z_2^n$ acts naturally on $M(P,\lambda)$ by $a\cdot [g,p]=[a+g,p]$, and the orbit space of this action is naturally identified with $P$.
Two small covers over $P$ are said to be \emph{Davis--Januszkiewicz equivalent} (or simply \emph{D--J equivalent}) if there is a homeomorphism between them
which is weakly $\Z_2^n$-equivariant with respect to an automorphism of $\Z_2^n$ and which covers the identity on $P$.
Throughout the paper, unless otherwise specified, small covers are considered up to D--J equivalence.

Elementary row operations on $\lambda$ correspond to changing the basis of $\Z_2^n$, and hence do not change the small cover up to D--J equivalence.
After relabeling the facets so that $F_1,\dots,F_n$ meet at a chosen vertex and then applying elementary row operations over $\Z_2$, we may assume that
$$
    \lambda=(I_n\mid A).
$$

We also recall the Davis--Januszkiewicz description of the mod $2$ cohomology ring.
Let $v_i\in H^1(M;\Z_2)$ be the Poincar\'e dual of the characteristic submanifold over the facet $F_i$.
Then
\begin{equation} \label{eq:mod2cohom_smallcover}
    H^\ast(M;\Z_2)\cong \Z_2[v_1,\dots,v_m]/(I_P+J_\lambda),  
\end{equation}
where $\deg v_i=1$ for all $i$.
Here, $I_P$ denotes the Stanley--Reisner ideal of $P$, that is, the ideal generated by all square-free monomials $v_{i_1}\cdots v_{i_r}$ for which the corresponding facets $F_{i_1},\dots,F_{i_r}$ have empty intersection, and $J_\lambda$ is the ideal generated by the linear forms
$$
    \sum_{i=1}^m \lambda_{ji}v_i \qquad (j=1,\dots,n),
$$
where $\lambda_{ji}$ denotes the $(j,i)$-entry of the characteristic matrix $\lambda$.

Let $f_i$ denote the number of codimension-$(i+1)$ faces of $P$.
Thus $f_{-1}=1$ corresponds to the unique codimension-$0$ face, namely $P$ itself.
Its $h$-vector $h(P)=(h_0,\ldots,h_n)$ is defined by
$$
    \sum_{i=0}^n h_i t^{n-i} = \sum_{i=0}^n f_{i-1}(t-1)^{n-i}.
$$
The $i$th mod~$2$ Betti number $b_i(M;\Z_2)$ is completely determined by face numbers of $P$, that is, 
\begin{equation} \label{eq:mod2betti}
    b_i(M;\Z_2)=h_i(P),  
\end{equation}
where $h(P)=(h_0,\dots,h_n)$ is the $h$-vector of $P$; see \cite{Davis-Januszkiewicz1991}.

\subsection{Real moment-angle complexes and real toric spaces} \label{subsection:RZK}
Let $K$ be a simplicial complex on the vertex set $[m]$.
The \emph{real moment-angle complex} $\RZ_K$ is defined by
$$
    \RZ_K = \bigcup_{\sigma\in K}(D^1,S^0)^\sigma \subset (D^1)^m,
$$
where
$$
    (D^1,S^0)^\sigma = \{(x_1,\dots,x_m)\in (D^1)^m \mid x_i\in S^0 \text{ for } i\notin \sigma\}.
$$

When $K$ is a piecewise linear~(PL)~$(n-1)$-sphere, $\RZ_K$ is a simplicial~$n$-manifold; see~\cite[Theorem~2.3]{Cai2017}.
In particular, if $n \leq 7$, $\RZ_K$ admits a smooth structure.
Throughout this paper, all simplicial spheres are finite.
In dimension three, every simplicial sphere is PL.

\begin{example} \label{ex:RZ_polygon}
    Let $K=\partial P_m^\ast$ be the boundary complex dual to an $m$-gon.
        
    We use the cubewise cell structure on $(D^1)^m=[-1,1]^m$.
    A cell of $\RZ_{K}$ is determined by a simplex $\sigma\in K$ together with a choice of signs on the complementary coordinates.
    Since $K$ has one empty simplex, $m$ vertices, and $m$ edges, the induced cell decomposition on $\RZ_{K}$ has $f_0=2^m$, $f_1=m2^{m-1}$, $f_2=m2^{m-2}$ cells in dimensions $0,1,2$, respectively.

    Since $K \cong S^1$, $\RZ_K$ is a closed surface.
    Since $\RZ_K$ is a smooth complete intersection of real quadrics in $\R^m$, its normal bundle is canonically trivialized by the gradients of the defining equations, and hence, $\RZ_K$ is stably parallelizable, in particular orientable.
     
    Let $\genus(m)$ denote the genus of $\RZ_K$.
    Since the Euler characteristic is $2^{m-2}(4 - m)$, we obtain $\genus(m) = 1 + (m-4) 2^{m-3}$.
    In particular, $m=3$ gives $S^2$, $m=4$ gives $T^2$, and $m\ge 5$ gives a higher-genus surface.

    If $K=\partial (P_{m_1} \times P_{m_2})^\ast = \partial P_{m_1}^\ast \join \partial P_{m_2}^\ast$, then
    $$
        \RZ_K \cong \RZ_{\partial P_{m_1}^\ast}\times \RZ_{ \partial P_{m_2}^\ast},
    $$
    thus $\RZ_K$ is a product of two closed orientable surfaces.
\end{example}

We say that a linear map $\lambda \colon \Z_2^m\to \Z_2^\ell$ is \emph{nonsingular} over $K$ if, for every simplex~$\sigma\in K$, the vectors $\lambda(e_i)$ for $i\in \sigma$ are linearly independent over $\Z_2$.
Such a map $\lambda$ is also called a \emph{mod~$2$ characteristic} map over~$K$.
If $P$ is a simple $n$-polytope and $K=\partial P^\ast$, then the vertices of $K$ are naturally identified with the facets of~$P$.
With this identification and with $\ell=n$, a mod-$2$ characteristic map over $K$ is precisely a mod-$2$ characteristic map over $P$ in the sense of Subsection~\ref{subsec:smallcover}.

If $\lambda$ is a mod~$2$ characteristic map over~$K$, then $\ker\lambda$ acts freely on $\RZ_K$ by~\cite{Choi-Kaji-Theriault2017}.
The quotient
$$
    M^\R(K,\lambda):=\RZ_K/\ker\lambda
$$
is called a \emph{real toric space}.

When $\lambda = \id_{\Z_2^m}$, one has $M^\R(K,\lambda)=\RZ_K$.
When $K=\partial P^\ast$ is the dual complex of a simple polytope $P$ and $\lambda\colon \Z_2^m\to \Z_2^n$ is a mod~$2$ characteristic map on $P$, this construction recovers the small cover $M(P,\lambda)$.

Note that not every simplicial~sphere is the dual complex of a simple polytope.
Thus the above construction contains all small covers in the classical sense.
However, the mod $2$ cohomology presentation~\eqref{eq:mod2cohom_smallcover} and the mod~$2$ Betti number formula~\eqref{eq:mod2betti} extend to the generalized Davis--Januszkiewicz setting; see \cite[Theorem~5.12]{Davis-Januszkiewicz1991}.
Therefore, when $K$ is a simplicial $(n-1)$-sphere and $\lambda\colon \Z_2^m\to \Z_2^n$ is nonsingular over $K$, we also call $M^\R(K,\lambda)$ a \emph{small cover over $K$}.
This agrees with the classical small cover when $K=\partial P^\ast$ for a simple polytope $P$.

One of the main cohomological tools in this paper is the cohomology formula for real toric spaces.
\begin{theorem}[{\cite{Choi-Park2020}}]\label{thm:CP}
    Let $\field$ be a commutative ring in which $2$ is a unit.
    There is a $(\Z\oplus \row(\lambda))$-graded $\field$-algebra isomorphism
    $$
        H^\ast\bigl(M^\R(K,\lambda);\field \bigr) \cong \bigoplus_{\omega\in \row(\lambda)} \widetilde H^{\ast-1}(K_\omega;\field).
    $$ 
    More precisely, the product structure on $\bigoplus_{\omega\in \row(\lambda)}\widetilde H^\ast(K_\omega;\field)$ is given by the canonical maps
    $$
        \widetilde H^{i-1}(K_{\omega_1};\field)\otimes \widetilde H^{j-1}(K_{\omega_2};\field) \to \widetilde H^{i+j-1}(K_{\omega_1+\omega_2};\field),
    $$
    which are induced by the simplicial maps
    $$
        K_{\omega_1+\omega_2}\to K_{\omega_1} \join K_{\omega_2},
    $$
    where $\join$ denotes the simplicial join.
\end{theorem}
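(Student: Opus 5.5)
The plan is to deduce the statement from the corresponding decomposition for the real moment-angle complex $\RZ_K$, and then pass to the quotient by $\ker\lambda$ using a transfer argument, which works because $2$ is a unit in $\field$.

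\emph{Step 1: decomposing the cochains of $\RZ_K$.} I would use the cubewise cell structure on $\RZ_K\subset[-1,1]^m$ from Example~\ref{ex:RZ_polygon}. A cell is a pair $(\sigma,\varepsilon)$, where $\sigma\in K$ and $\varepsilon$ is a choice of signs on the coordinates in $[m]\setminus\sigma$. The group $\Z_2^m$ acts by coordinatewise reflections and permutes these cells. Since $2$ is invertible, the cellular cochain complex $C^\ast(\RZ_K;\field)$ splits into isotypic pieces indexed by the characters $\chi_\omega(g)=(-1)^{\langle g,\omega\rangle}$, for $\omega\in\Z_2^m$.

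For each $\omega$, I would write down an explicit basis of the $\chi_\omega$-piece. For a fixed $\sigma$, take the signed sum over $\varepsilon$ twisted by $\chi_\omega$. This sum is nonzero only when the reflections in the coordinates of $\sigma$ act compatibly with $\chi_\omega$; tracking the orientation of the interval factor $D^1$ in each coordinate shows this happens exactly when $\sigma\subset\omega$. Identifying these basis elements with simplices of $K_\omega$, with a degree shift by one, the cellular coboundary becomes the simplicial coboundary of the augmented cochain complex of $K_\omega$. This yields the Hochster-type decomposition
$$
H^\ast(\RZ_K;\field)\cong\bigoplus_{\omega\subset[m]}\widetilde H^{\ast-1}(K_\omega;\field),
$$
in which $\Z_2^m$ acts on the $\omega$-summand by $\chi_\omega$. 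I expect the sign bookkeeping in this identification to be fiddly, but it is standard.

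\emph{Step 2: passing to the quotient.} The group $\ker\lambda$ acts freely on $\RZ_K$ and is a finite $2$-group. Since $|\ker\lambda|$ is invertible in $\field$, the transfer gives
$$
H^\ast(M^\R(K,\lambda);\field)\cong H^\ast(\RZ_K;\field)^{\ker\lambda}.
$$
By Step 1, this invariant subring is the sum of those summands with $\chi_\omega|_{\ker\lambda}$ trivial. The condition $\chi_\omega|_{\ker\lambda}=1$ means $\omega\in(\ker\lambda)^\perp=\row(\lambda)$, which gives the additive statement. The transfer isomorphism is induced by the quotient map, so it is a ring map, and the grading by $\row(\lambda)$ is inherited from the character grading.

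\emph{Step 3: the product.} This is the main obstacle. The cup product respects the character grading, since $\chi_{\omega_1}\chi_{\omega_2}=\chi_{\omega_1+\omega_2}$. To identify the product with the map induced by $K_{\omega_1+\omega_2}\to K_{\omega_1}\join K_{\omega_2}$, I would use the standard cubical diagonal approximation on $[-1,1]^m$, which is a product of the interval diagonals.

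On an interval coordinate $i$, the diagonal pairs the $1$-cell with a vertex, and the character analysis forces the following: for $i\in\omega_1+\omega_2$, coordinate $i$ contributes to exactly one of the two factors; for $i\in\omega_1\cap\omega_2$, it contributes to neither factor in a nonzero way. Consequently a simplex $\sigma$ of $K_{\omega_1+\omega_2}$ splits as $(\sigma\cap\omega_1)\sqcup(\sigma\cap\omega_2)$, which is exactly the simplicial map to the join. The cochain-level product therefore matches the composite
$$
\widetilde C^{\ast}(K_{\omega_1})\otimes\widetilde C^{\ast}(K_{\omega_2})\to\widetilde C^{\ast+1}(K_{\omega_1}\join K_{\omega_2})\to\widetilde C^{\ast+1}(K_{\omega_1+\omega_2}),
$$
where the first arrow is the join (suspension) isomorphism, which accounts for the degree shift $i-1,\,j-1\mapsto i+j-1$. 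The remaining care is with signs.
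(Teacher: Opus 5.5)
The paper does not prove this theorem; it quotes it from Choi--Park, with Cai's integral result covering $\lambda=\id_{\Z_2^m}$. So there is no in-paper argument to compare with. Your outline is a standard and viable route: split the cubical cochains of $\RZ_K$ into $\Z_2^m$-eigenspaces, pass to $\ker\lambda$-invariants by transfer, and compute products with a cubical diagonal. Steps~1 and~2 are sound. One small correction to Step~1: write $u_\pm$ for the vertex cochains and $t$ for the $1$-cochain of an interval factor, with the $1$-cell $e$ oriented from $-1$ to $+1$. Your twisted sums carry $a=u_+-u_-$ in the coordinates of $\omega\setminus\sigma$, and $\delta a=2t$. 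Hence the coboundary is twice the augmented simplicial coboundary of $K_\omega$, and you should rescale the basis element for $\sigma$ by $2^{|\sigma|}$.

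The genuine problem is in Step~3. The cup product respects the character grading in cohomology, where each $g^\ast$ is a ring automorphism, but not on cochains. The standard interval diagonal $e\mapsto[-1]\otimes e+e\otimes[+1]$ is not reflection-equivariant: it gives $a\cup t=-t$ and $t\cup a=t$. Both are nonzero and have sign character, whereas a coordinate $i\in\omega_1\cap\omega_2$ must carry the trivial character in the product. So, contrary to what you assert, coordinates in $\omega_1\cap\omega_2$ do contribute nonzero terms at the cochain level. The product of two basis cochains need not lie in the $\chi_{\omega_1+\omega_2}$-summand, and it does not match the cochain map of $K_{\omega_1+\omega_2}\to K_{\omega_1}\join K_{\omega_2}$ on the nose.

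The repair is easy, and either of two versions works:
\begin{enumerate}
\item Apply the idempotent $e_{\omega_1+\omega_2}=2^{-m}\sum_{g\in\Z_2^m}\chi_{\omega_1+\omega_2}(g)\,g$ to the cochain product. It is a cochain map inducing the projection onto the $\omega_1+\omega_2$-summand, where the product class already lies.
\item Replace the interval diagonal by its reflection average $e\mapsto\tfrac12\bigl([-1]\otimes e+[+1]\otimes e+e\otimes[-1]+e\otimes[+1]\bigr)$. This is an equivariant cellular diagonal approximation, using once more that $\tfrac12\in\field$.
\end{enumerate}
Either way, only $a\cdot a=u_++u_-$ survives in coordinates of $\omega_1\cap\omega_2$. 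The product of the basis elements for $\sigma_1\in K_{\omega_1}$ and $\sigma_2\in K_{\omega_2}$ is then $\pm$ the basis element for $\sigma_1\sqcup\sigma_2$ when $\sigma_1\cap\omega_2=\sigma_2\cap\omega_1=\varnothing$ and $\sigma_1\cup\sigma_2\in K$, and $0$ otherwise. This is precisely the cochain map induced by $K_{\omega_1+\omega_2}\to K_{\omega_1}\join K_{\omega_2}$, composed with the join isomorphism on cochains, which gives the product formula in the statement.
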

In the special case $\lambda=\id_{\Z_2^m}$, this gives the corresponding formula for $\RZ_K$. 
For this case, Cai~\cite{Cai2017} proved the formula with integral coefficients.

In particular, for a real toric space $M=M^\R(K,\lambda)$, we have
\begin{equation} \label{eq:Hochster_decom}
    H^i(M;\Q) \cong \bigoplus_{\omega\in \row(\lambda)} \widetilde H^{i-1}(K_\omega;\Q),  
\end{equation}
and the multiplicative structure is given by Theorem~\ref{thm:CP}.
We will refer to \eqref{eq:Hochster_decom} as the \emph{Hochster-type decomposition}.

We now recall the orientability criterion for real toric spaces over simplicial~spheres.
Let $\one_m=(1,\dots,1)\in \Z_2^m$.
If $M=M(P,\lambda)$ is a small cover, then $K=\partial P^\ast$ is an $(n-1)$-sphere, so $\widetilde H^{n-1}(K;\Q)\ne 0$.
By Theorem~\ref{thm:CP}, this top class contributes to $H^n(M;\Q)$ exactly when $\one_m\in \row(\lambda)$.
Since $\one_m$ is the only weight that can contribute in top degree, one recovers the following criterion, originally due to~\cite{Nakayama-Nishimura2005} in the classical case.

\begin{corollary}\label{cor:NN}
    Let $K$ be a simplicial~sphere on $[m]$, and let $M = M^\R(K,\lambda)$ be a real toric space.
    Then $M$ is orientable if and only if the all-one vector
    $$
        \one_m=(1,\dots,1) \in \Z_2^m
    $$
    belongs to $\row(\lambda)$.
    In particular, $\RZ_K$ is orientable.
\end{corollary}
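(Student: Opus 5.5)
The plan is to apply Theorem~\ref{thm:CP} with $\field=\Q$ in top degree $n$, where $n-1=\dim K$. Since $K$ is a simplicial $(n-1)$-sphere, $M=M^\R(K,\lambda)$ is a closed connected $n$-manifold; connectedness follows because $H^0(M;\Q)$ is the summand indexed by $\omega=\varnothing$, namely $\widetilde H^{-1}(K_\varnothing;\Q)=\Q$. For such a manifold, orientability is equivalent to $H^n(M;\Q)\neq 0$, that is, to $H^n(M;\Q)\cong\Q$. By the Hochster-type decomposition \eqref{eq:Hochster_decom},
$$
H^n(M;\Q)\cong\bigoplus_{\omega\in\row(\lambda)}\widetilde H^{n-1}(K_\omega;\Q).
$$
So it suffices to identify which full subcomplexes $K_\omega$ have nonzero top rational cohomology.

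The key step is the following claim: for $\omega\subsetneq[m]$ we have $\widetilde H^{n-1}(K_\omega;\Q)=0$, while $\widetilde H^{n-1}(K;\Q)\cong\Q$. The second half holds because $K$ is a rational homology $(n-1)$-sphere. For the first half, I will pick a vertex $v\notin\omega$. Then $K_\omega$ is contained in $K\setminus\{v\}$, the deletion of the open star of $v$. I will then argue that $K_\omega$ is a proper subcomplex of the sphere $K$ missing at least one facet, namely any facet containing $v$. By Alexander duality in the sphere $K$,
$$
\widetilde H^{n-1}(K_\omega)\cong\widetilde H_{-1}(|K|\setminus|K_\omega|)=0,
$$
since the complement is nonempty. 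Equivalently, a proper subcomplex of a pseudomanifold has no nonzero rational top cocycle class: top-dimensional homology is spanned by cycles, and the only $(n-1)$-cycle in a strongly connected pseudomanifold uses all facets. This is the one step that needs care. If the paper's notion of simplicial sphere is only a homology sphere, I will use Alexander duality for homology manifolds, or else the pseudomanifold argument, which only needs strong connectivity and the fact that each codimension-one face lies in exactly two facets.

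Combining these, $H^n(M;\Q)\cong\Q$ if $\one_m\in\row(\lambda)$ and $0$ otherwise, which gives the stated criterion. For $\RZ_K$, where $\lambda=\id$, the row space is all of $\Z_2^m$, so $\one_m$ lies in it and $\RZ_K$ is orientable. I expect no real obstacle beyond justifying the vanishing for proper full subcomplexes.
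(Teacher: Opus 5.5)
Your proposal is correct and follows the paper's own argument: apply Theorem~\ref{thm:CP} in top degree and observe that $\one_m$ is the only weight that can contribute, so $H^n(M;\Q)\neq 0$ exactly when $\one_m\in\row(\lambda)$. You also supply two details the paper leaves implicit, namely the vanishing of $\widetilde H^{n-1}(K_\omega;\Q)$ for proper $\omega$ (via Alexander duality or the pseudomanifold argument) and the connectedness of $M$ via the weight-zero summand in degree $0$.
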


The following is an elementary linear-algebra observation.
We state it separately because it will be used several times in the sequel.

\begin{lemma}\label{lem:row-kernel-duality}
    Let $\lambda\colon \Z_2^m\to \Z_2^\ell$ be a linear map, and let $G_\lambda=\ker\lambda$.
    For $u\in \Z_2^m$, $u \not\in \row(\lambda)$ if and only if there exists $g \in G_\lambda$ such that 
    $$
        \langle u, g \rangle=1,
    $$
    where $\langle-,-\rangle$ denotes the standard pairing on $\Z_2^m$.
\end{lemma}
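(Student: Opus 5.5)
The plan is to identify $\row(\lambda)$ with the annihilator of $G_\lambda=\ker\lambda$ under the standard pairing on $\Z_2^m$, and then read off the statement as its contrapositive. Concretely, I will show
$$
    \row(\lambda)=G_\lambda^{\perp}:=\{u\in\Z_2^m \mid \langle u,g\rangle=0 \text{ for all } g\in G_\lambda\}.
$$
The lemma then says exactly that $u\notin\row(\lambda)$ if and only if $u\notin G_\lambda^\perp$, which by definition means $\langle u,g\rangle=1$ for some $g\in G_\lambda$ (over $\Z_2$, a nonzero pairing equals $1$).

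For the inclusion $\row(\lambda)\subseteq G_\lambda^\perp$, I would use the defining property of the kernel. If $r_j$ denotes the $j$th row of the matrix of $\lambda$, then $\langle r_j,g\rangle$ is the $j$th coordinate of $\lambda(g)$. This coordinate vanishes for every $g\in\ker\lambda$. By linearity, every element of the row space pairs to zero with $G_\lambda$.

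For the reverse inclusion I will argue by dimension count. The standard pairing on $\Z_2^m$ is nondegenerate, so for any subspace $W$ one has $\dim W^\perp=m-\dim W$. This holds over any field, even though over $\Z_2$ a subspace can meet its own orthogonal complement. Rank--nullity gives $\dim G_\lambda=m-\rank\lambda$, hence $\dim G_\lambda^\perp=\rank\lambda=\dim\row(\lambda)$. Combined with the first inclusion, this forces equality.

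I do not expect any real obstacle. The only point needing care is to avoid any intuition from positive definiteness: one should use only nondegeneracy and the resulting dimension formula for annihilators, not a direct-sum decomposition $\Z_2^m=W\oplus W^\perp$, which can fail in characteristic $2$.
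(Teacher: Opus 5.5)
Your proposal is correct and follows the same approach as the paper, which derives the lemma from the identity $\row(\lambda)=(\ker\lambda)^\perp$ under the standard pairing on $\Z_2^m$. You also supply the proof of that identity, via the inclusion $\row(\lambda)\subseteq G_\lambda^\perp$ plus the dimension count from nondegeneracy and rank--nullity, which the paper leaves implicit.
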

\begin{proof}
    This follows from the elementary identity
    $$
        \row(\lambda)=(\ker\lambda)^\perp
    $$
    with respect to the standard pairing on $\Z_2^m$.
\end{proof}

\subsection{Symplectic and basic complex-geometric terminology}
Let $N$ be a closed smooth manifold of real dimension $2n$.
A \emph{symplectic structure} on $N$ is a closed nondegenerate differential $2$-form $\Omega$ on $N$, and a manifold equipped with such a form is called
\emph{symplectic}.
If $(N,\Omega)$ is symplectic, then
$$
    [\Omega]^n\neq 0\in H^{2n}(N;\R),
$$
since $\Omega^n$ is a nowhere vanishing top-degree form.
Accordingly, $N$ is called \emph{cohomologically symplectic}, or \emph{c-symplectic}, if there exists a class $a\in H^2(N;\Q)$ such that
$$
    a^n\neq 0\in H^{2n}(N;\Q).
$$
Since rational classes are dense in $H^2(N;\R)$ and the condition $x^n\neq 0$ is open, every symplectic manifold is c-symplectic.
The converse is false in general. 

We begin with the elementary criterion for c-symplecticity in dimension four.

\begin{proposition}\label{prop-four-manifold-csymp}
    Let $N$ be a closed connected smooth $4$-manifold.
    Then $N$ is c-symplectic if and only if $N$ is orientable and $b_2(N)>0$.
    
    In particular, for a $4$-dimensional small cover~$M=M^\R(K,\lambda)$ over a simplicial~$3$-sphere~$K$, $M$ is c-symplectic if and only if  $\one_m \in \row(\lambda)$ and there exists $\omega\in\row(\lambda)$ such that $\widetilde H^1(K_\omega;\Q)\ne 0$.
\end{proposition}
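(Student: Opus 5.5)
The plan has two parts: first the general criterion, then its translation into the combinatorics of $K$ and $\lambda$.

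For the general criterion, we prove each direction separately.

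\emph{Necessity.} Suppose $a\in H^2(N;\Q)$ satisfies $a^2\neq 0$. Then $H^2(N;\Q)\neq 0$, so $b_2(N)>0$. Also $H^4(N;\Q)\neq 0$, and for a closed connected $4$-manifold this forces orientability, since a nonorientable closed connected manifold has vanishing top rational cohomology.

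\emph{Sufficiency.} Suppose $N$ is orientable with $b_2(N)>0$. By Poincar\'e duality the cup product pairing
$$
H^2(N;\Q)\times H^2(N;\Q)\to H^4(N;\Q)\cong\Q
$$
is a nondegenerate symmetric bilinear form. Over $\Q$, with $2$ invertible, a nonzero symmetric bilinear form is not alternating, so its quadratic form is not identically zero. Concretely, polarization gives
$$
2\,(x\smile y)=(x+y)^2-x^2-y^2.
$$
If $x^2=0$ held for every $x$, the form would vanish identically. That contradicts nondegeneracy, since $H^2(N;\Q)\neq 0$. Hence some $a$ has $a^2\neq0$.

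For the small cover statement, we apply the criterion to $M$, which is a closed connected smooth $4$-manifold; smoothness holds since $n=4\le 7$ and $K$ is PL, being a simplicial $3$-sphere.
\begin{itemize}
\item Orientability is equivalent to $\one_m\in\row(\lambda)$ by Corollary~\ref{cor:NN}.
\item The Hochster-type decomposition~\eqref{eq:Hochster_decom} with $i=2$ gives
$$
H^2(M;\Q)\cong\bigoplus_{\omega\in\row(\lambda)}\widetilde H^1(K_\omega;\Q).
$$
Thus $b_2(M)>0$ if and only if some $\omega\in\row(\lambda)$ has $\widetilde H^1(K_\omega;\Q)\neq0$.
\end{itemize}

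Connectedness of $M$ is the one point not covered by the cited results; it follows from $b_0$. By the decomposition in degree $0$, only those $\omega\in\row(\lambda)$ with $\widetilde H^{-1}(K_\omega)\neq 0$ contribute. By the usual convention, that happens only for the empty complex, i.e.\ $\omega=0$. Hence $b_0(M)=1$. Alternatively, $b_0(M;\Z_2)=h_0=1$ by~\eqref{eq:mod2betti}.

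No step here is a serious obstacle. The only care needed is the nonalternating argument in the sufficiency direction, which is where $\Q$-coefficients (rather than $\Z_2$) are used.
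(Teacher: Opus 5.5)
Your proposal is correct and follows the paper's proof: Poincar\'e duality plus polarization for the general criterion, then Corollary~\ref{cor:NN} and the Hochster-type decomposition~\eqref{eq:Hochster_decom} for the small-cover statement. Your extra checks fill in points the paper's proof leaves implicit: that $H^4(N;\Q)\neq 0$ forces orientability, and that $M$ is connected via $b_0(M)=1$.
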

\begin{proof}
    The ``only if'' part is clear.
    Indeed, a c-symplectic class $a$ and $a^2$ are nonzero in $H^2(N;\Q)$ and $H^4(N;\Q)$, respectively.
    Conversely, assume that $N$ is orientable and $b_2(N)>0$.
    By Poincar\'e duality, the intersection pairing
    $$
        Q(x,y)=\langle xy,[N]\rangle
    $$
    on $H^2(N;\Q)$ is nondegenerate.
    If every $x\in H^2(N;\Q)$ satisfied $x^2=0$, then the polarization identity would give
    $$
        Q(x,y) = \frac12\left\langle (x+y)^2-x^2-y^2,[N]\right\rangle=0
    $$
    for all $x,y$, contradicting nondegeneracy.
    Thus there exists $x\in H^2(N;\Q)$ such that $x^2\ne 0$.
    The final assertion follows by combining the first statement with Theorem~\ref{thm:CP} and Corollary~\ref{cor:NN}.
\end{proof}

\section{Basic obstructions} \label{sec-basic_obstructions}
Throughout this section, $K$ is a simplicial~$3$-sphere on the vertex set $[m]$, and $\lambda \colon \Z_2^m \to \Z_2^4$ a mod~$2$ characteristic map over $K$.
Let $M = M^\R(K,\lambda) = \RZ_K/{G_\lambda}$ be the corresponding small cover, where $G_\lambda=\ker\lambda$.

The purpose of this section is to collect several basic obstructions to the existence of symplectic structures on $M$ or $\RZ_K$.
Some of them depend only on $K$, while others depend on the canonical cover $\RZ_K$ or on the action of $G_\lambda$.

The first obstruction is elementary but fundamental.
Since every small cover is a finite quotient of its real moment-angle cover, symplecticity of the small cover forces symplecticity of the canonical cover.

\begin{proposition} \label{prop:canonical-cover-obstruction}
    Let $M = \RZ_K/G_\lambda$ be a $4$-dimensional small cover.
    If $M$ admits a symplectic structure, then $\RZ_K$ admits a $G_\lambda$-invariant symplectic structure.
    Conversely, if $\RZ_K$ admits a $G_\lambda$-invariant symplectic structure, then $M$ is symplectic.
    
    In particular, if $\RZ_K$ is not symplectic, then no small cover over $K$ is symplectic.
\end{proposition}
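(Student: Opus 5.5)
The statement follows from covering space theory: the quotient map $\pi\colon \RZ_K\to M=\RZ_K/G_\lambda$ is a finite regular smooth covering whose deck group is $G_\lambda$. The plan is to make this precise and then pull back, or push down, a symplectic form.

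First, I would record the smooth setup. Here $K$ is a simplicial $3$-sphere, hence PL, so $\RZ_K$ is a closed smoothable $4$-manifold. By \cite{Choi-Kaji-Theriault2017}, $G_\lambda=\ker\lambda$ acts freely on $\RZ_K$, and the action is by coordinate sign changes. Since $G_\lambda$ is finite and acts freely, $\pi$ is a regular covering with deck group $G_\lambda$. The smooth structure on $M$ is the one for which $\pi$ is a local diffeomorphism. This is compatible with the construction of small covers, since $\RZ_K$ carries a $\Z_2^m$-invariant smooth structure (for instance, in the polytopal case, as a transverse intersection of quadrics). If needed, one can instead take $M$'s smooth structure and lift it; either way $\pi$ is a smooth $G_\lambda$-regular covering.

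For the forward direction, let $\Omega$ be a symplectic form on $M$ and set $\widetilde\Omega=\pi^\ast\Omega$. This form is closed because $d$ commutes with pullback. It is nondegenerate because $d\pi$ is a linear isomorphism at every point. It is $G_\lambda$-invariant because $\pi\circ g=\pi$ for every deck transformation $g$, so $g^\ast\widetilde\Omega=(\pi\circ g)^\ast\Omega=\widetilde\Omega$.

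For the converse, let $\widetilde\Omega$ be a $G_\lambda$-invariant symplectic form on $\RZ_K$. I would define $\Omega$ on $M$ locally. Around any point of $M$, choose an evenly covered open set $U$ and a section $s\colon U\to\RZ_K$, and put $\Omega|_U=s^\ast\widetilde\Omega$. Two sections differ by a deck transformation, and $\widetilde\Omega$ is invariant under deck transformations, so these local forms agree on overlaps. Each local form is closed and nondegenerate because $s$ is a local diffeomorphism, and by construction $\pi^\ast\Omega=\widetilde\Omega$.

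The final assertion follows from the forward direction: if some small cover $M=\RZ_K/G_\lambda$ over $K$ were symplectic, then $\RZ_K$ would be symplectic. There is no real obstacle here. The only point needing care is the first step, namely checking that the smooth structures on $M$ and $\RZ_K$ are compatible, so that $\pi$ is a smooth covering. Once that is in place, everything else is formal.
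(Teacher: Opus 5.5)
Your proof is correct and takes the same approach as the paper. You pull the symplectic form back along the finite free quotient map $\pi\colon \RZ_K\to M$, and for the converse you descend a $G_\lambda$-invariant form using local sections of the regular covering; your attention to the compatibility of the smooth structures on $\RZ_K$ and $M$ makes explicit a point the paper leaves implicit.
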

\begin{proof}
    Let $\pi \colon \RZ_K \to M$ be the finite covering map. 
    If $\Omega$ is a symplectic form on $M$, then $\widetilde\Omega:=\pi^\ast \Omega$ is a closed nondegenerate $2$-form on $\RZ_K$. 
    Moreover, $\widetilde\Omega$ is $G_\lambda$-invariant because $\pi$ is the quotient map by $G_\lambda$.
    
    Conversely, suppose that $\RZ_K$ carries a $G_\lambda$-invariant symplectic form $\widetilde\Omega$. 
    Since the action of $G_\lambda$ is free and finite, $G_\lambda$-invariant differential forms on $\RZ_K$ are naturally identified with differential forms on the quotient $\RZ_K/G_\lambda=M$. 
    Hence $\widetilde\Omega$ descends to a closed nondegenerate $2$-form on $M$. 
    Thus $M$ is symplectic.
\end{proof}

We shall also use the following consequence.

\begin{lemma} \label{lem:canonical-class-invariant}
    Assume that $M=\RZ_K/G_\lambda$ is symplectic, and let $\widetilde\Omega=\pi^\ast\Omega$ be the pulled-back symplectic form on $\RZ_K$. 
    Then every element of $G_\lambda$ preserves the symplectic canonical class
    $$
        K_{\widetilde\Omega}\in H^2(\RZ_K;\Z).
    $$
\end{lemma}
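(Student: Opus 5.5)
The plan is to use the fact that the symplectic canonical class is natural under symplectomorphisms and depends only on the symplectic form up to homotopy of compatible almost complex structures. Fix $g\in G_\lambda$, viewed as a deck transformation of the covering $\pi\colon\RZ_K\to M$. Since $\pi\circ g=\pi$, we have
$$
    g^\ast\widetilde\Omega=g^\ast\pi^\ast\Omega=(\pi\circ g)^\ast\Omega=\pi^\ast\Omega=\widetilde\Omega,
$$
so $g$ is a symplectomorphism of $(\RZ_K,\widetilde\Omega)$. We then need the fact that a symplectomorphism pulls back the canonical class to the canonical class, i.e.\ $g^\ast K_{\widetilde\Omega}=K_{g^\ast\widetilde\Omega}=K_{\widetilde\Omega}$.

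To justify naturality, recall that $K_{\widetilde\Omega}=-c_1(T\RZ_K,J)$ for any $\widetilde\Omega$-compatible almost complex structure $J$. The space of such $J$ is nonempty and contractible, so $c_1(T\RZ_K,J)$ is independent of the choice. If $J$ is compatible with $\widetilde\Omega$, then $g^\ast J:=dg^{-1}\circ J\circ dg$ is compatible with $g^\ast\widetilde\Omega=\widetilde\Omega$. Moreover, $dg$ gives a complex bundle isomorphism $(T\RZ_K,g^\ast J)\cong g^\ast(T\RZ_K,J)$. Naturality of Chern classes then yields
$$
    c_1(T\RZ_K,g^\ast J)=g^\ast c_1(T\RZ_K,J).
$$
Combining this with the independence of the choice of $J$ gives $g^\ast K_{\widetilde\Omega}=K_{\widetilde\Omega}$. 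The same statement for $(g^{-1})^\ast$ follows symmetrically, although it is not needed.

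A cleaner alternative is to choose a compatible $J$ on $M$ and lift it to $\widetilde J=\pi^\ast J$ on $\RZ_K$. This lift is $G_\lambda$-invariant and compatible with $\widetilde\Omega$, and then
$$
    K_{\widetilde\Omega}=-c_1(T\RZ_K,\widetilde J)=\pi^\ast K_\Omega.
$$
Hence $g^\ast K_{\widetilde\Omega}=(\pi\circ g)^\ast K_\Omega=\pi^\ast K_\Omega=K_{\widetilde\Omega}$. I would present this second version, since it avoids the contractibility argument apart from the well-definedness of $K_\Omega$.

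There is no real obstacle. The only point needing care is that ``preserves'' means $g^\ast K_{\widetilde\Omega}=K_{\widetilde\Omega}$ in integral cohomology, including torsion. Since pullback of Chern classes is natural integrally, this holds.
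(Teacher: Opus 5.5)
Your proposal is correct. Your first argument is essentially the paper's proof. Each $g$ is a symplectomorphism of $(\RZ_K,\widetilde\Omega)$, $g^\ast J$ is again $\widetilde\Omega$-compatible, and independence of $-c_1(T\RZ_K,J)$ from the compatible $J$ gives invariance. You also make explicit the step $c_1(T\RZ_K,g^\ast J)=g^\ast c_1(T\RZ_K,J)$, via the complex bundle isomorphism induced by $dg$, which the paper leaves implicit.

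The version you say you would present takes a mildly different route. Instead of naturality of the canonical class under symplectomorphisms, you lift a compatible $J$ from $M$ to get $K_{\widetilde\Omega}=\pi^\ast K_\Omega$. Invariance then follows from $\pi\circ g=\pi$ alone. This gives a slightly stronger conclusion: the canonical class of the cover is pulled back from $M$, which over $\Z$ is more than $G_\lambda$-invariance. The paper's argument is more general in the other direction, since it applies to any symplectomorphism of $(\RZ_K,\widetilde\Omega)$, not only deck transformations.

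One small correction to your remark. The descent version does not really avoid the independence-of-$J$ argument: you still need $K_{\widetilde\Omega}$ to be well defined on $\RZ_K$ to identify $-c_1(T\RZ_K,\widetilde J)$ with the canonical class. This is harmless, since the statement presupposes it anyway.
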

\begin{proof}
    Every $g\in G_\lambda$ is a symplectomorphism of $(\RZ_K,\widetilde\Omega)$. 
    Choose an almost complex structure $J$ compatible with $\widetilde\Omega$. 
    Then $g^\ast J$ is also compatible with $\widetilde\Omega$. 
    Since the space of compatible almost complex structures is contractible, the class~$K_{\widetilde\Omega}:=-c_1(T\RZ_K,J)$ is independent of the choice of~$J$. 
    Therefore $g^\ast K_{\widetilde\Omega}=K_{\widetilde\Omega}$ for every $g\in G_\lambda$.
\end{proof}

\begin{remark} \label{rem:cover-vs-quotient}
    Proposition~\ref{prop:canonical-cover-obstruction} separates two different questions. 
    First, one may ask whether the canonical cover $\RZ_K$ is symplectic at all. 
    Second, even if $\RZ_K$ is symplectic, one must ask whether it carries a $G_\lambda$-invariant symplectic form.
    This distinction will be important later.
\end{remark}

The next obstruction depends only on the face numbers of $K$.
\begin{proposition}\label{prop-wu-obstruction}
    If a $4$-dimensional small cover $M=M^\R(K,\lambda)$ admits a symplectic structure, then
    $$
        \chi(M)\equiv 0\pmod 4.
    $$
    Equivalently, the number $f_3(K)$ of facets of $K$ is divisible by $4$.
\end{proposition}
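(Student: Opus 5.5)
The plan is to combine the standard almost-complex obstruction with the observation that the signature of a $4$-dimensional small cover vanishes. A symplectic structure on $M$ yields a compatible almost complex structure $J$. Put $c=c_1(TM,J)\in H^2(M;\Z)$. Then $c$ reduces mod $2$ to $w_2(M)$, so by Wu's formula $c$ is a characteristic element for the intersection form. By van der Blij's lemma,
$$
c^2\equiv \sigma(M)\pmod 8.
$$
Hirzebruch's formula gives $c^2=2\chi(M)+3\sigma(M)$. Subtracting, $2\chi(M)+2\sigma(M)\equiv 0 \pmod 8$, that is,
$$
\chi(M)+\sigma(M)\equiv 0\pmod 4.
$$
It therefore suffices to show that $\sigma(M)=0$ and that $\chi(M)\equiv f_3(K)\pmod 4$.

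For the signature, I will use the Hochster-type decomposition \eqref{eq:Hochster_decom}. It gives $H^2(M;\Q)=\bigoplus_{\omega\in\row(\lambda)}V_\omega$ with $V_\omega=\widetilde H^1(K_\omega;\Q)$, and the decomposition is multiplicative with respect to the $\row(\lambda)$-grading by Theorem~\ref{thm:CP}. Since $M$ is symplectic, it is orientable, so Corollary~\ref{cor:NN} gives $\one_m\in\row(\lambda)$. Moreover, $H^4(M;\Q)$ is concentrated in weight $\one_m$, because $\widetilde H^3(K_\omega;\Q)\neq 0$ only for $\omega=\one_m$. Consequently the cup product $V_\omega\otimes V_{\omega'}\to H^4$ vanishes unless $\omega+\omega'=\one_m$. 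Since $\omega\neq\omega+\one_m$, the intersection form pairs $V_\omega$ only with the distinct summand $V_{\omega+\one_m}$. Grouping the weights into pairs $\{\omega,\omega+\one_m\}$ exhibits the form as a block sum of forms of the shape $\begin{pmatrix}0&B\\B^T&0\end{pmatrix}$. Each such block has signature zero, so $\sigma(M)=0$.

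For the Euler characteristic, I will use \eqref{eq:mod2betti}: $\chi(M)=\sum_i(-1)^ih_i$. Substituting $t=-1$ into the defining relation of the $h$-vector gives
$$
\chi(M)=\sum_{i=0}^4 f_{i-1}(-2)^{4-i}=16-8f_0+4f_1-2f_2+f_3,
$$
where $f_j$ now counts the $j$-simplices of $K$. Reducing mod $4$ leaves $\chi(M)\equiv f_3-2f_2$. In the simplicial $3$-sphere $K$, every tetrahedron has four triangles and every triangle lies in exactly two tetrahedra, so $f_2=2f_3$. Hence $\chi(M)\equiv -3f_3\equiv f_3\pmod 4$, which proves the stated equivalence.

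The main point requiring care is the signature step. One must check that the product structure of Theorem~\ref{thm:CP} really places the top class only in weight $\one_m$, so that no weight space pairs with itself. Once that is confirmed, the remaining steps are standard.
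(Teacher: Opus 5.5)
Your proposal is correct and follows the paper's proof essentially step for step: a compatible almost complex structure, the Hirzebruch--Hopf formula combined with van der Blij's congruence, and $\sigma(M)=0$ because the Hochster weight $\omega$ pairs only with $\omega+\one_m\neq\omega$. The only cosmetic difference is in the face-number step. You reduce $16-8f_0+4f_1-2f_2+f_3$ modulo $4$ using just $f_2=2f_3$, whereas the paper first rewrites $\chi(M)=f_1-5f_0+16$ and then uses $f_1-f_0=f_3$.
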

\begin{proof}
    Let $\Omega$ be a symplectic form on $M$, and choose an $\Omega$-compatible almost complex structure $J$.
    Then the first Chern class $c_1(TM,J)$ is a characteristic element for the intersection form, and the Hirzebruch--Hopf formula~\cite{Hirzebruch-Hopf1958} gives
    \begin{equation}\label{eq:almost_complex_identity}
        c_1(TM,J)^2=2\chi(M)+3\sigma(M),      
    \end{equation}
    where $\sigma(M)$ is the signature of $M$.
    
    Since $M$ is symplectic, it is orientable. 
    Hence $\one_m\in\row(\lambda)$ by Corollary~\ref{cor:NN}.
    By the Hochster-type product structure in Theorem~\ref{thm:CP}, the intersection pairing on $H^2(M;\Q)$ pairs the summand of weight $\omega$ only with the summand of weight $\omega^c=\one_m+\omega$.
    Since no weight satisfies $\omega=\omega^c$, the intersection form is a direct sum of hyperbolic pairs. 
    Thus $\sigma(M)=0$.
    Thus
    $$
        c_1(TM,J)^2=2\chi(M).
    $$
    
    By van der Blij's congruence~\cite{Blij1959}, the square of a characteristic element is congruent to the signature modulo $8$.
    Therefore
    $$
        2\chi(M) = c_1(TM,J)^2 \equiv 0\pmod 8,
    $$
    which proves the first statement.
    
    For a small cover over a simplicial $3$-sphere, by~\eqref{eq:mod2betti} we have
    $$
        \chi(M)=h_0-h_1+h_2-h_3+h_4=f_1 - 5f_0 + 16.
    $$
    For a simplicial~$3$-sphere, we have $f_1-f_0 = f_3$.
    These prove the equivalent form.
\end{proof}

Let $K$ be a simplicial~$3$-sphere on the vertex set~$[m]$.
If $K_\omega\cong S^1$, then, after fixing signs on the complementary coordinates, we obtain an embedded coordinate surface $\RZ_{K_\omega} \subset \RZ_K$.

\begin{lemma}\label{lem:coordinate-surface-dual}
    Let $K$ be a simplicial~$3$-sphere on~$[m]$, and let $\omega\subset [m]$ be such that $K_\omega\cong S^1$.
    Then the coordinate surface $\RZ_{K_\omega}\subset \RZ_K$ represents a nonzero rational homology class. 
    Moreover, its rational self-intersection is zero.
    
    In particular, for $q \geq 3$, if $K_\omega\cong \partial P_q^\ast$, then $\RZ_{K_\omega}\cong \Sigma_{\genus(q)}$ is an embedded closed orientable surface of square zero in $\RZ_K$.
\end{lemma}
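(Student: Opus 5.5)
The plan is to find a dual class via the Hochster-type decomposition (Theorem~\ref{thm:CP} with $\lambda=\id$), and to get the square-zero claim by displacing the surface.

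\textbf{Nonzero class.} Write $\omega^c=[m]\setminus\omega$. Since $K$ is a simplicial $3$-sphere and $K_\omega\cong S^1$, Alexander duality gives
$$
    \widetilde H^1(K_{\omega^c};\Q)\cong \widetilde H_1(K_\omega;\Q)\cong\Q .
$$
Take a generator $\beta\in\widetilde H^1(K_{\omega^c};\Q)$. It gives a class $b\in H^2(\RZ_K;\Q)$ of weight $\omega^c$. Let $\alpha$ generate $\widetilde H^1(K_\omega;\Q)$; it gives a class $a$ of weight $\omega$. The product $ab$ lies in $\widetilde H^3(K;\Q)$, and it is the image of $\alpha\otimes\beta$ under the map induced by $K\to K_\omega\join K_{\omega^c}$. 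By Alexander duality this map pairs $\widetilde H^1(K_\omega)$ and $\widetilde H^1(K_{\omega^c})$ perfectly onto $\widetilde H^3(K)$. This is the linking pairing, identified with the join map, and it is the step I expect to need the most care. Hence $ab\neq0$.

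\textbf{Restriction to the surface.} Consider the inclusion $\iota\colon \RZ_{K_\omega}\hookrightarrow\RZ_K$ obtained by fixing the signs $x_i=\pm1$ for $i\notin\omega$. The restriction $\iota^*$ is compatible with the Hochster decomposition: it sends the summand of weight $u\subset\omega$ identically to itself, and it kills summands with $u\not\subset\omega$. This can be seen via the cellular description of Cai/Choi--Park, or via the retraction $(D^1)^m\to(D^1)^\omega$. Therefore $\iota^*a$ is the top class of $\RZ_{K_\omega}$, coming from $\widetilde H^1(K_\omega)$ in weight $\omega$. So $\langle a,\iota_*[\RZ_{K_\omega}]\rangle\neq0$, and the class $\iota_*[\RZ_{K_\omega}]$ is nonzero.

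\textbf{Self-intersection zero.} Choose $j\notin\omega$; such a $j$ exists because $K_\omega\neq K$. Flip the sign of the fixed coordinate $x_j$. The resulting surface $\RZ_{K_\omega}'$ is disjoint from $\RZ_{K_\omega}$. It is homologous to $\RZ_{K_\omega}$, for the following reason. Since $\omega\notin K$ as a simplex (its full subcomplex is a circle), there is a facet-free vertex; more simply, $\omega\cup\{j\}$ need not be a simplex, but $K_{\omega\cup\{j\}}$ gives a $3$-chain. Alternatively, and more cleanly, compute
$$
    [\Sigma]\cdot[\Sigma]=\langle (PD\Sigma)^2,[\RZ_K]\rangle .
$$
By the previous step, $PD\Sigma$ pairs nontrivially only with classes of weight $\omega$. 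Hence $PD\Sigma$ has pure weight $\omega^c$. Its square then has weight $\omega^c+\omega^c=0$. The weight-$0$ summand of $H^4$ is $\widetilde H^3(K_\varnothing)=0$, since the top class has weight $\one_m\neq0$. So the square vanishes. I would use this weight argument rather than the displacement.

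For the final assertion, $K_\omega\cong\partial P_q^\ast$ gives $\RZ_{K_\omega}\cong\Sigma_{\genus(q)}$ by Example~\ref{ex:RZ_polygon}, and this surface is orientable.
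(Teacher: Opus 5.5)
Your nonvanishing argument is sound, but the self-intersection step rests on a false claim. It is not true that $\iota^\ast$ kills every weight-$u$ summand with $u\not\subset\omega$, and so $\mathrm{PD}(\Sigma)$ need not have pure weight $\omega^c$. The retraction $\RZ_K\to\RZ_{K_\omega}$ only gives the statement for $u\subseteq\omega$. In general $\iota$ is only $\Z_2^{\omega}$-equivariant, so $\iota^\ast$ sends weight $u$ to weight $u\cap\omega$. On cellular cochains it is, up to sign, the restriction $\widetilde C^{\ast-1}(K_u)\to\widetilde C^{\ast-1}(K_{u\cap\omega})$, and this can be nonzero in cohomology for $u\supsetneq\omega$.

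For a concrete check, label $\partial P_5^\ast$ cyclically by $1,\dots,5$. In $\RZ_{\partial P_5^\ast}$, the coordinate circles for $\{1,3\}$ and $\{2,5\}$ meet transversally in exactly one vertex when their fixed $x_4$-signs agree, and are disjoint otherwise. If their duals had pure weights $\{2,4,5\}$ and $\{1,3,4\}$, their product would have weight $\{1,2,3,5\}\neq\one_5$ and would vanish. Taking products with the analogous circles for $\{1',3'\}$ and $\{2',5'\}$ in a second pentagon gives a counterexample for $K=\partial(P_5\times P_5)^\ast$ and $\omega=\{1,3,1',3'\}$. There $K_\omega\hookrightarrow K_{\omega\cup\{4\}}$ is a homotopy equivalence, so $\iota^\ast$ is nonzero on weight $\omega\cup\{4\}$. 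The same example shows that the sign-flipped copy in your discarded displacement argument need not be homologous to $\Sigma$. The paper's proof makes the same pure-weight assertion without justification, so you are following its route, flaw included.

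The conclusion survives if you use only what equivariance gives. Since $H^2(\RZ_{K_\omega};\Q)$ is concentrated in weight $\omega$, $\iota^\ast$ vanishes in degree $2$ on every weight $u\not\supseteq\omega$. The cup pairing on $H^2(\RZ_K;\Q)$ pairs weight $u$ perfectly with weight $u^c$, because $H^4(\RZ_K;\Q)$ lives in weight $\one_m$. Hence $\mathrm{PD}(\Sigma)$ has components only in weights $v\subseteq\omega^c$. Then $\iota^\ast\mathrm{PD}(\Sigma)$ lies in weight $v\cap\omega=\varnothing$, where $H^2(\RZ_{K_\omega};\Q)$ is zero, and therefore
\[
    [\Sigma]\cdot[\Sigma]=\langle \iota^\ast\mathrm{PD}(\Sigma),[\Sigma]\rangle=0.
\]
Equivalently, every product of two components has weight contained in $\omega^c\neq[m]$, whereas $H^4(\RZ_K;\Q)$ lives in weight $\one_m$.

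Your nonvanishing step needs only the weight-$\omega$ summand, where $\iota^\ast$ is an isomorphism by the retraction. This is more careful than the paper, which infers $\mathrm{PD}(\Sigma)\neq 0$ merely from the target summand being nonzero. The linking computation $ab\neq 0$ is not needed.
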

\begin{proof}
    The rational cohomology of $\RZ_K$ is given by the Hochster-type decomposition~\eqref{eq:Hochster_decom}.
    Under Poincar\'e duality for $\RZ_K$, the fundamental class of the coordinate submanifold $\RZ_{K_\omega}$ has rational Poincar\'e dual in the summand indexed by the complementary subset~$\omega^c$.
    
    Since $K_\omega\cong S^1$, combinatorial Alexander duality for the simplicial~$3$-sphere $K$ gives $\widetilde H^1(K_{\omega^c};\Q)\cong \Q$.
    Thus the Poincar\'e dual of $\RZ_{K_\omega}$ is nonzero, and hence $\RZ_{K_\omega}$ represents a nonzero rational homology class.
    
    The square of this Poincar\'e dual has Hochster weight $\omega^c+\omega^c=0$.
    The weight-zero summand in degree four is $\widetilde H^3(K_\varnothing;\Q)=0$.
    Therefore the square of the Poincar\'e dual is zero by Theorem~\ref{thm:CP}. 
    Equivalently, the coordinate surface has rational self-intersection zero.
\end{proof}

The following obstruction depends only on the real moment-angle complex.
It is especially useful for non-flag spheres with induced missing triangles.
The source of this obstruction is Li's theorem on embedded spheres in symplectic $4$-manifolds~\cite{Li1999}, which states that a symplectic $4$-manifold containing an infinite-order smoothly embedded sphere of nonnegative self-intersection must be rational or ruled.

\begin{corollary}\label{cor:coordinate-sphere-obstruction}
Let $K$ be a simplicial~$3$-sphere. 
Suppose that there is a subset $\omega\subset [m]$ such that $K_\omega\cong \partial P^\ast_3$.
Assume also that $\RZ_K$ is neither rational nor ruled. 
Then $\RZ_K$ is not symplectic.
\end{corollary}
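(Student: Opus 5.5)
The plan is to argue by contradiction, combining Lemma~\ref{lem:coordinate-surface-dual} with Li's theorem~\cite{Li1999}. Suppose $\RZ_K$ admits a symplectic form, and fix $\omega\subset[m]$ with $K_\omega\cong\partial P_3^\ast$.

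\textbf{Step 1: produce the sphere.} Fix signs $\epsilon_j\in S^0$ on the complementary coordinates $j\notin\omega$ and consider
$$
\RZ_{K_\omega}\times\{\epsilon\}\subset\RZ_K .
$$
By Example~\ref{ex:RZ_polygon} with $m=3$ (equivalently $\genus(3)=0$), this is a $2$-sphere. It is a coordinate subcomplex of $\RZ_K$, namely the intersection of $\RZ_K$ with the coordinate plane obtained by fixing $x_j=\epsilon_j$ for $j\notin\omega$.

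\textbf{Step 2: check it is a smooth embedding.} We need the sphere to be smoothly embedded for the smooth structure on $\RZ_K$ in which the symplectic form lives. Working in the standard smooth model of $\RZ_K$ (or its PL structure, which is unique up to isotopy in dimension four), the sphere is the fixed-point set of the reflections in the coordinates $j\notin\omega$ intersected with a component. Its link condition comes from the fact that $K_\omega$ is a full subcomplex, so the inclusion is locally flat and can be smoothed. I expect this to be the only delicate point. The fallback is that Li's theorem applies to any smoothly embedded sphere representing the class, and the fixed-point set of a smooth involution is a smooth submanifold, which settles it.

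\textbf{Step 3: verify the hypotheses of Li's theorem.} By Lemma~\ref{lem:coordinate-surface-dual}, the sphere represents a nonzero rational homology class, so it has infinite order in $H_2(\RZ_K;\Z)$. Its self-intersection is $0$, which is nonnegative.

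\textbf{Step 4: conclude.} Li's theorem now says that the symplectic $4$-manifold $\RZ_K$ must be rational or ruled. This contradicts the hypothesis, so $\RZ_K$ is not symplectic.
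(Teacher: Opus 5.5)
Your Steps~1, 3 and~4 are the paper's proof: the coordinate sphere $S=\RZ_{K_\omega}\times\{\epsilon\}$ of Lemma~\ref{lem:coordinate-surface-dual} is non-torsion with square zero, and Li's theorem then forces $\RZ_K$ to be rational or ruled. The paper takes a smoothly embedded sphere from Lemma~\ref{lem:coordinate-surface-dual} without further comment. Your Step~2 addresses exactly this point, but the justification you give is wrong.

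\textbf{The fixed-point claim is false.} $S$ is not a fixed-point set of the reflections $r_j\colon x_j\mapsto -x_j$, $j\notin\omega$. The reflection $r_j$ fixes $\{x_j=0\}\cap\RZ_K$, the codimension-one characteristic submanifold over $F_j$. It moves $S$ entirely off itself, onto the slice $x_j=-\epsilon_j$. So your ``fallback'' has nothing to apply to.

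\textbf{Fullness of $K_\omega$ does not give local flatness.} At each of the eight vertices of $S$ (points of $\{\pm1\}^m$), the local pair $(\RZ_K,S)$ is the cone on $(|K|,|K_\omega|)$. A missing triangle can be knotted in $|K|\cong S^3$. For example, take a triangulated $3$-ball with a knotted spanning edge $ab$ and cone its boundary from a new vertex $c$. The result is a $3$-sphere in which $abc$ is a missing, knotted triangle. Then $S$ is not locally flat at its vertices, so it is not a smooth submanifold for any smooth structure on $\RZ_K$. Li's theorem therefore cannot be applied to $S$ itself.

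Interior points of edges and squares of $S$ do have standard local pairs. So if $|K_\omega|$ is unknotted in $|K|$, $S$ is a PL locally flat sphere, it can be smoothed, and your argument is fine.

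\textbf{A repair in the knotted case.}
\begin{enumerate}
\item The reflections $x_i\mapsto -x_i$, $i\in\omega$, preserve $S$ and reverse the orientations of both $\RZ_K$ and $S$.
\item Hence, if $k$ is the local knot at one vertex, the local knots at the eight vertices are four copies of $k$ and four copies of its reversed mirror image $-\bar k$.
\item Let $N\cong B^4$ be a regular neighbourhood of a tree in $S$ through the eight vertices. Then $\partial N\cap S$ is $(k\#-\bar k)^{\#4}$, which is slice.
\item Replace $S\cap N$ by a slice disc. This gives a PL locally flat, hence smoothable, sphere.
\item Its class equals $[S]$, since the difference is a $2$-cycle in $N$ and $H_2(N)=0$. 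So it is still non-torsion of square zero, and Li's theorem applies to it.
\end{enumerate}
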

\begin{proof}
    Suppose that $\RZ_K$ is symplectic.
    Since $\RZ_{K_\omega}\cong S^2$, Lemma~\ref{lem:coordinate-surface-dual} gives a smoothly embedded sphere in $\RZ_K$ which represents a nonzero rational homology class and has self-intersection zero.
    Li's theorem~\cite{Li1999} would then imply that $\RZ_K$ is rational or ruled, which is a contradiction.
\end{proof}

We also prepare one obstruction used later.
A closed manifold is called \emph{algebraically simply connected} if its fundamental group has no nontrivial finite quotients.
\begin{proposition}[{\cite[Proposition~1]{Kotschick-Morgan-Taubes1995}}] \label{prop:KMT-connected-sum}
    Let $X$ be a closed symplectic $4$-manifold.
    Suppose that $X$ decomposes as a nontrivial smooth connected sum.
    Then one of the summands has negative definite intersection form and is algebraically simply connected.
\end{proposition}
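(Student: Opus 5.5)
This is the result of Kotschick--Morgan--Taubes, and the plan is to derive it from two Seiberg--Witten facts: Taubes' non-vanishing theorem~\cite{Taubes1994} and the vanishing theorem for connected sums. Taubes' theorem says that a closed symplectic $4$-manifold $X$ with $b_2^+(X)>1$ has nonzero Seiberg--Witten invariant for the canonical $\mathrm{Spin}^c$ structure. The vanishing theorem says that if $X=X_1\# X_2$ with $b_2^+(X_1)>0$ and $b_2^+(X_2)>0$, then all Seiberg--Witten invariants of $X$ vanish. The latter is proved by stretching the neck along the separating $S^3$: the positive-scalar-curvature round metric on the neck and the $b_2^+$ of each side force the glued moduli space to be empty for a generic perturbation. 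Since $b_2^+(X)\ge 2$ in that situation, there is no chamber dependence. I will assume both theorems.

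\emph{Step 1: one summand is negative definite.} Write $X=X_1\# X_2$. Since $[\Omega]^2\neq 0$ and $[\Omega]^2>0$ on $[X]$, we have $b_2^+(X)\ge 1$. Additivity gives $b_2^+(X)=b_2^+(X_1)+b_2^+(X_2)$. If both summands had $b_2^+>0$, then $b_2^+(X)\ge 2$, so Taubes' theorem and the vanishing theorem would contradict each other. Hence one summand, say $X_2$, has $b_2^+(X_2)=0$, i.e.\ its intersection form is negative definite. This includes the case $b_2(X_2)=0$. Then $b_2^+(X_1)=b_2^+(X)\ge 1$.

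\emph{Step 2: $X_2$ is algebraically simply connected.} Suppose instead that $\pi_1(X_2)$ has a nontrivial finite quotient $G$, with $d=|G|\ge 2$. Since $\pi_1(X)\cong \pi_1(X_1)\ast\pi_1(X_2)$, the map $\pi_1(X)\to\pi_1(X_2)\to G$ is onto, so it defines a connected $d$-sheeted cover $\widetilde X\to X$. This cover restricts to the trivial cover over $X_1$ minus a ball and to the $G$-cover $\widetilde X_2$ over $X_2$ minus a ball. Therefore
\begin{equation*}
\widetilde X\cong \widetilde X_2\# \underbrace{X_1\#\cdots\# X_1}_{d}.
\end{equation*}
The pulled-back form makes $\widetilde X$ symplectic, and
\begin{equation*}
b_2^+(\widetilde X)\ge d\, b_2^+(X_1)\ge 2.
\end{equation*}
Group the decomposition as $X_1\#\bigl(\widetilde X_2\# X_1\#\cdots\# X_1\bigr)$. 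Both pieces have $b_2^+\ge 1$, because the second piece contains at least one more copy of $X_1$ since $d\ge 2$. The vanishing theorem then contradicts Taubes' theorem for $\widetilde X$. Hence $\pi_1(X_2)$ has no nontrivial finite quotient.

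\emph{Main obstacle.} Once the two Seiberg--Witten theorems are granted, the only delicate point is the covering-space bookkeeping in Step 2. One must check that the induced cover is connected, which uses surjectivity onto $G$. One must also check that it splits as stated, with $d$ copies of $X_1$ rather than a single lifted copy. The role of ``nontrivial'' in the hypothesis is only to make the decomposition meaningful; the argument itself never uses it.
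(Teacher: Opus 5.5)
Your argument is correct. The paper does not prove this statement itself; it only cites Kotschick--Morgan--Taubes. Your route is essentially their argument: Taubes' non-vanishing theorem against the connected-sum vanishing theorem forces $b_2^+=0$ on one summand, and a finite quotient of $\pi_1(X_2)$ gives a symplectic finite cover $\widetilde X_2\# X_1\#\cdots\# X_1$, containing at least two copies of $X_1$, with $b_2^+\ge 2$, which yields the contradiction.
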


\section{The flagness obstruction in dimension four}
In this section, we prove the main flagness obstruction in dimension four.
The only exceptional non-flag case for the real moment-angle cover is $\partial(P_3\times P_{m-3})^\ast$, and this case will be excluded later for
small covers by a c-symplectic argument.

Let $K$ be a simplicial~$3$-sphere.
A simplicial complex $K$ is called \emph{flag} if every finite set of vertices which are pairwise joined by edges spans a simplex of $K$.
Equivalently, $K$ is flag if and only if every missing face of $K$ has cardinality two. 
Here a subset~$\omega\subset V(K)$ is called a missing face if $\omega\notin K$ and every proper subset of $\omega$ belongs to $K$.

Since $K$ is a simplicial~$3$-sphere, a non-flag $K$ has a missing face of cardinality~$3$, $4$, or~$5$. 
If $K$ has a missing face of cardinality~$5$, then $K=\partial\Delta^4$.
In this case $\RZ_K\cong S^4$, so $\RZ_K$ is not symplectic.
Moreover, every small cover over $K$ is diffeomorphic to $\R P^4$, hence is non-orientable and not symplectic.
Therefore, it remains to consider the case where $K$ has a missing triangle or a missing tetrahedron.

\subsection{Missing-triangle case}
We first prepare two elementary combinatorial facts.

\begin{lemma} \label{lem:boundary_of_polygon}
    Let $L$ be a finite simplicial complex on the vertex set $V$. 
    Assume that $\widetilde H_1(L;\Q)\cong \Q$ and that $\widetilde H_1(L_\upsilon;\Q)=0$ for every proper subset $\upsilon\subsetneq V$. 
    Then $L$ is the boundary of a polygon.
\end{lemma}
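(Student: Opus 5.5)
Since $\widetilde H_1(L;\Q)\cong\Q$, $L$ is nonempty and contains a cycle in its $1$-skeleton. We pass to the graph level: a rational $1$-cycle generating $\widetilde H_1(L;\Q)$ is supported on the $1$-skeleton, and the $1$-skeleton of $L$ has a simple cycle $C$ (an embedded circle) on some vertex set $\upsilon\subseteq V$. We first show that we may choose $C$ so that its class in $H_1(L;\Q)$ is nonzero. Cycle space of the $1$-skeleton $L^{(1)}$ is spanned by simple cycles. $H_1(L)$ is the quotient of $Z_1(L^{(1)})$ by boundaries of $2$-simplices. If every simple cycle were null-homologous, then $H_1(L;\Q)=0$, a contradiction. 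So fix a simple cycle $C$ with $[C]\neq 0$, with vertex set $\upsilon$.

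Next we apply the minimality hypothesis. The class $[C]$ is already nonzero in $\widetilde H_1(L_\upsilon;\Q)$, since the inclusion $L_\upsilon\hookrightarrow L$ sends it to a nonzero class. Hence $\widetilde H_1(L_\upsilon;\Q)\neq 0$, and so $\upsilon=V$ by the hypothesis. Thus $C$ is a Hamiltonian cycle through all vertices of $L$.

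It remains to show that $L=C$, i.e.\ $L$ has no extra edges (chords) and no $2$-simplices. Suppose $L$ contains a $2$-simplex. Then it has three pairwise adjacent vertices, and some edge of it is a chord of $C$, unless $C$ is itself a triangle. In the triangle case, $L$ would be the full $2$-simplex with $H_1=0$, a contradiction. So it suffices to rule out a chord $e=\{a,b\}$ of $C$. Such a chord splits $C$ into two paths $\gamma_1,\gamma_2$ from $a$ to $b$, each with at least one interior vertex. The cycles $C_1=\gamma_1\cup e$ and $C_2=\gamma_2\cup e$ are simple cycles on proper vertex subsets $\upsilon_1,\upsilon_2\subsetneq V$. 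Over $\Q$, with suitable orientations, $[C]=[C_1]+[C_2]$ in $H_1(L)$. Each $[C_i]$ is the image of a class in $\widetilde H_1(L_{\upsilon_i};\Q)=0$, so $[C_i]=0$ and therefore $[C]=0$, a contradiction. Hence $C$ has no chords, $L$ has no $2$-simplices, and no higher simplices either, since those contain triangles. The $1$-skeleton is exactly $C$, every vertex lies on $C$, and $L=C$ is the boundary of a polygon with $|V|\ge 3$ vertices.

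The main obstacle is the first step: producing a simple cycle that is nonzero in homology. This rests on the standard fact that simple cycles span the cycle space of a graph, which we will state and use. The chord-splitting step is routine once orientations are chosen consistently.
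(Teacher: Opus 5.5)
Your proposal is correct and follows essentially the same route as the paper: pick a homologically nontrivial simple cycle $C$, use the full-subcomplex hypothesis to force $V(C)=V$, rule out chords by splitting $C$ into two cycles on proper vertex subsets, and then exclude $2$-simplices (a chord if $|V|>3$, a filled triangle killing $H_1$ if $|V|=3$). Your only addition is the explicit justification, via simple cycles spanning the cycle space of the $1$-skeleton, that such a $C$ exists, which the paper takes for granted.
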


\begin{proof}    
    Choose a simple edge cycle $C$ in $L$ which represents a nonzero class in~$H_1(L;\Q)$.
    Let $V(C)$ be the vertex set of $C$.
    If $V(C)\subsetneq V$, then the class of $C$ is nonzero in~$H_1(L_{V(C)};\Q)$, since otherwise it would also vanish in~$H_1(L;\Q)$.
    This contradicts the assumption. 
    Hence $V(C)=V$.

    The cycle $C$ has no chord.
    Indeed, a chord would split $C$ into two cycles supported on proper subsets of $V$.
    Each of those cycles would bound in the corresponding full subcomplex, and hence $C$ would also bound in $L$, a contradiction.
    
    Thus the $1$-skeleton of $L$ is exactly the cycle $C$. 
    Moreover $L$ has no higher-dimensional simplices. 
    If $|V|>3$, any $2$-simplex would give a chord of $C$.
    If $|V|=3$, filling the triangle would kill $H_1(L;\Q)$.
    Therefore $L=C$, the boundary of a polygon.
\end{proof}

\begin{lemma} \label{lem:missing_triangle}
    Let $K$ be a simplicial~$3$-sphere on~$[m]$.
    Suppose that there exists a subset $\omega\subset [m]$ such that $K_\omega \cong \partial P_3^\ast$.
    Then $b_2(\RZ_K)\ge 2$.
    Moreover, if $b_2(\RZ_K)=2$, then $K \cong \partial (P_3 \times P_{m-3})^\ast$.
\end{lemma}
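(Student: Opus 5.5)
We will work with the Hochster-type decomposition~\eqref{eq:Hochster_decom} for $\RZ_K$ itself (so $\lambda=\id$ and every $\omega\subset[m]$ is a weight). This gives
$$
b_2(\RZ_K)=\sum_{\upsilon\subset[m]}\dim\widetilde H^1(K_\upsilon;\Q).
$$
The missing triangle $\omega$ contributes $1$ through $K_\omega\cong S^1$. By combinatorial Alexander duality for the $3$-sphere $K$, $\widetilde H^1(K_{\omega^c};\Q)\cong\widetilde H_1(K_\omega;\Q)\cong\Q$. Since $|\omega|=3$ and $m\ge 5$ (a simplicial $3$-sphere has at least $5$ vertices), we have $\omega^c\ne\omega$. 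Hence $b_2(\RZ_K)\ge 2$.

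Now assume $b_2(\RZ_K)=2$. Then $\omega$ and $\omega^c$ are the only subsets $\upsilon$ with $\widetilde H^1(K_\upsilon;\Q)\ne0$, and each contributes exactly one dimension. In particular $\widetilde H_1(K_{\omega^c};\Q)\cong\Q$, while $\widetilde H_1(K_\upsilon;\Q)=0$ for every proper subset $\upsilon\subsetneq\omega^c$; such a $\upsilon$ cannot equal $\omega$, because $\omega\cap\omega^c=\varnothing$ and $\upsilon$ nonempty (the empty set contributes nothing in degree $1$). Lemma~\ref{lem:boundary_of_polygon} therefore shows that $K_{\omega^c}$ is the boundary of an $(m-3)$-gon, i.e.\ $K_{\omega^c}\cong\partial P_{m-3}^\ast$.

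It remains to show that $K=K_\omega\join K_{\omega^c}$. This is the step I expect to be the main obstacle. The join $\partial P_3^\ast\join\partial P_{m-3}^\ast$ is itself a simplicial $3$-sphere on the same vertex set $[m]$. Since a simplicial $3$-sphere contains no proper subcomplex that is a $3$-sphere (pseudomanifold argument), it suffices to show $K_\omega\join K_{\omega^c}\subset K$, that is, every edge $e$ of the triangle $\omega$ together with every edge $f$ of the polygon $\omega^c$ spans a tetrahedron of $K$.

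For this I would use the link structure. Take $e=\{a,b\}\subset\omega$ with third vertex $c$. The link $\link_K(e)$ is a circle. Since $\omega$ is a missing face, $c\notin\link_K(e)$, so $\link_K(e)\subset K_{\omega^c}$. A circle contained in the cycle $K_{\omega^c}$ (which has no chords) must equal it. Hence every edge $f$ of $K_{\omega^c}$ satisfies $e\cup f\in K$. This proves $K\supseteq K_\omega\join K_{\omega^c}$, and therefore equality, giving $K\cong\partial(P_3\times P_{m-3})^\ast$.
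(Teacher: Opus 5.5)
Your proof is correct and follows the paper's route: the Hochster decomposition plus Alexander duality give $b_2(\RZ_K)\ge 2$, Lemma~\ref{lem:boundary_of_polygon} makes $K_{\omega^c}$ a polygon, and then $\link_K(e)=K_{\omega^c}$ for every edge $e\subset\omega$. The differences are minor. You get $\link_K(e)=K_{\omega^c}$ directly, since a circle subcomplex of a cycle graph is the whole cycle, so the paper's preliminary step using $\widetilde H_1(K_{\omega\cup\{x\}};\Q)=0$ is not needed. You also finish with a pseudomanifold argument, whereas the paper uses the automatic inclusion $K\subseteq K_\omega\join K_{\omega^c}$: every simplex splits into parts lying in the two full subcomplexes.
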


\begin{proof}
    Recall the Hochster-type decomposition~\eqref{eq:Hochster_decom} for $\RZ_K$
    $$
        H^2(\RZ_K;\Q) \cong \bigoplus_{I\subset [m]}\widetilde H^1(K_I;\Q).
    $$
    Since $K_\omega\cong \partial P_3^\ast \cong S^1$, the summand indexed by $\omega$ contributes one dimension. 
    By Alexander duality for full subcomplexes of the simplicial~$3$-sphere $K$, the complementary full subcomplex $K_{\omega^c}$ also satisfies $\widetilde H^1(K_{\omega^c};\Q)\cong \Q$.
    Hence $b_2(\RZ_K)\ge 2$.

    Assume now that $b_2(\RZ_K)=2$. 
    Then the only nonzero Hochster summands in degree $2$ are those indexed by $\omega$ and ${\omega^c}$. 
    In particular, $\widetilde H_1(K_I;\Q)=0$ for every $I\ne \omega,\omega^c$.
    
    Applying Lemma~\ref{lem:boundary_of_polygon} to $L=K_{\omega^c}$, we obtain $K_{\omega^c}\cong \partial P_q^\ast$ for $q=|{\omega^c}|$.
    
    It remains to see that $K$ is the join of $K_\omega$ and $K_{\omega^c}$. 
    For $x\in \omega^c$, we have $\widetilde H_1(K_{\omega\cup\{x\}};\Q)=0$.
    Since $K_\omega$ is the boundary of a missing triangle, the only possible $2$-simplices in $K_{\omega\cup\{x\}}$ which can bound this cycle are the three triangles containing $x$.
    Thus all these triangles must be present.
    Hence, for every edge $e\subset \omega$, the triangle $e\cup\{x\}$ is a face of $K$.
    
    Therefore, for every edge $e\subset \omega$, the link $\link_K(e)$ is a simplicial $1$-sphere whose vertex set is exactly $\omega^c$. 
    Since $K_{\omega^c}$ is a polygonal cycle, and since every edge of~$\link_K(e)$ is an edge of $K_{\omega^c}$, we must have $\link_K(e)=K_{\omega^c}$.
    Thus $e \cup f$ is a simplex of~$K$ for every edge $e\subset \omega$ and every edge $f\subset \omega^c$ of $K_{\omega^c}$. 
    Equivalently, all facets of $K_\omega \join K_{\omega^c}$ belong to $K$. 
    
    Since $K_\omega$ and $K_{\omega^c}$ are full subcomplexes and both are $1$-dimensional cycles, no additional simplex can occur. 
    Hence $K=K_\omega \join K_{\omega^c} \cong \partial P_3^\ast  \join \partial P_{m-3}^\ast$.
\end{proof}

\begin{lemma} \label{lem:rational-ruled-exclusions}
    The following hold.
    \begin{enumerate}
        \item An orientable four-dimensional small cover is neither rational nor ruled.
        \item Let $K$ be a simplicial~$3$-sphere. 
            If $b_2(\RZ_K)>2$, then $\RZ_K$ is neither rational nor ruled.
    \end{enumerate}
\end{lemma}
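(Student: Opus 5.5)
The plan is to show that in both situations the signature vanishes, list the rational or ruled $4$-manifolds with signature zero, and then exclude that short list using Betti numbers. We use the standard classification: a rational or ruled $4$-manifold is diffeomorphic to $\mathbb{CP}^2\#k\overline{\mathbb{CP}^2}$, or to $S^2\times S^2$, or to a blow-up $E\#k\overline{\mathbb{CP}^2}$ of an $S^2$-bundle $E$ over a closed orientable surface $\Sigma_\genus$.

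\textbf{Step 1: vanishing signature.} For both $\RZ_K$ and an orientable small cover $M$, we repeat the argument in the proof of Proposition~\ref{prop-wu-obstruction}. By Corollary~\ref{cor:NN}, $\one_m\in\row(\lambda)$; for $\RZ_K$ take $\lambda=\id$, so that $\row(\lambda)=\Z_2^m$. Theorem~\ref{thm:CP} then shows that the intersection form pairs weight $\omega$ only with weight $\omega^c\neq\omega$. Hence the form is a sum of hyperbolic pairs and $\sigma=0$.

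\textbf{Step 2: the signature-zero list.} Among the manifolds above, the signatures are $1-k$, $0$, and $-k$ respectively, since $\sigma(E)=0$. So $\sigma=0$ forces the manifold to be $\mathbb{CP}^2\#\overline{\mathbb{CP}^2}$, or $S^2\times S^2$, or an $S^2$-bundle over $\Sigma_\genus$. Every one of these has $b_2=2$. Part~(2) follows at once, because $b_2(\RZ_K)>2$.

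\textbf{Step 3: part (1).} Now let $M$ be an orientable small cover. The three candidates have $b_1(-;\Z_2)=2\genus$ and $b_2(-;\Z_2)=2$, with $\genus=0$ for $\mathbb{CP}^2\#\overline{\mathbb{CP}^2}$ and $S^2\times S^2$. These mod~$2$ Betti numbers are diffeomorphism invariants, so by \eqref{eq:mod2betti} a diffeomorphism would give
$$
    h_1(K)=m-4=2\genus,\qquad h_2(K)=2 .
$$
The lower bound theorem for simplicial $3$-spheres gives $h_2\ge h_1$, hence $\genus\le 1$.

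Since $K$ has $m\ge 5$ vertices, $h_1\ge 1$, which rules out $\genus=0$. It remains to treat $\genus=1$, that is, $m=6$ and $h_2=h_1=2$. This is the equality case of the lower bound theorem, so $K$ is a stacked $3$-sphere. A small cover over a stacked sphere is obtained from a small cover over $\partial\Delta^4$ by repeated equivariant connected sums with $\R P^4$ (vertex truncation). Concretely it is $\R P^4\#\R P^4\#\R P^4$, which is non-orientable. This contradicts orientability.

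\textbf{Expected obstacle.} The main point needing care is this last step. It requires two ingredients: that equality $h_2=h_1$ characterizes stacked spheres, which is the equality case of the lower bound theorem for homology $3$-spheres and should be cited, and that every small cover over a stacked polytope is a connected sum of copies of $\R P^4$ and therefore non-orientable. If citing the second fact is awkward, a direct alternative is available: with $m=6$ the stacked $K$ has the explicit form $\partial\Delta^4$ with one vertex truncated twice. One can then check that $\one_6\notin\row(\lambda)$ for every characteristic matrix $(I_4\mid A)$ on it, and apply Corollary~\ref{cor:NN}.
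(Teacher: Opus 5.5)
\textbf{There is a genuine gap in the final step of part (1).} Steps~1 and~2 are fine. The Hochster-pairing argument gives $\sigma=0$ both for $\RZ_K$ and for orientable small covers; for part~(2) it replaces the paper's stable-parallelizability/Hirzebruch argument and even covers non-polytopal $K$. Your reduction in Step~3 to $\genus=1$, $m=6$ and $K$ stacked is also correct.

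The last step, however, rests on a false claim. With $m=6$ there is exactly one stacking, so $K=\partial\Delta^3\join S^0=\partial(\Delta^3\times\Delta^1)^\ast$. The dual polytope is $\Delta^4$ with one vertex cut off, a prism, so ``truncated twice'' and $\R P^4\#\R P^4\#\R P^4$ are miscounts. More seriously, this $K$ carries orientable small covers. Assign columns $(0,e_1),(0,e_2),(0,e_3),(0,e_1+e_2+e_3)$ to the four side facets and $(1,0,0,0)$ to both end facets. This satisfies the characteristic condition, since each vertex is three side facets plus one end. The four rows sum to $\one_6$, and the small cover is $\R P^3\times S^1$, which is orientable.

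So neither ``every small cover over a stacked polytope is a connected sum of copies of $\R P^4$'' nor your proposed check $\one_6\notin\row(\lambda)$ holds. A characteristic map on $P_1\# P_2$ need not restrict to characteristic maps on $P_1$ and $P_2$: here the four side facets have empty intersection, so their vectors may be dependent. Mod~$2$ Betti numbers cannot rescue this case either. $\R P^3\times S^1$ has mod~$2$ Betti numbers $(1,2,2,2,1)$, exactly those of an $S^2$-bundle over $T^2$.

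\textbf{How to close it.} The statement itself survives, but you need a different invariant. By Theorem~\ref{thm:CP},
$b_1(M;\Q)=\sum_{\omega\in\row(\lambda)}\dim\widetilde H^0(K_\omega;\Q)$. In $\partial\Delta^3\join S^0$ the only disconnected full subcomplex is the suspension pair $S^0$, so $b_1(M;\Q)\le 1$. An $S^2$-bundle over $T^2$ has $b_1=2$, which gives the contradiction.

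The paper avoids the whole signature, lower-bound-theorem and stacked-sphere analysis. By \eqref{eq:mod2cohom_smallcover}, $H^\ast(M;\Z_2)$ is generated in degree one. A rational surface has $H^1(-;\Z_2)=0$ but nonzero $H^2$. In a ruled surface over $\Sigma_\genus$, every degree-one class is pulled back from the base, so the subring they generate cannot reach $H^4(-;\Z_2)\neq 0$. That argument is uniform in $\genus$ and needs no case analysis.
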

\begin{proof}
    (1) Let $M$ be an orientable four-dimensional small cover. 
    By~\eqref{eq:mod2cohom_smallcover}, the ring~$H^\ast(M;\Z_2)$ is generated by $H^1(M;\Z_2)$.
    This is impossible for a rational surface, since then $H^1(-;\Z_2)=0$ but $H^2(-;\Z_2)\neq 0$.
    It is also impossible for a ruled surface over a positive-genus base, since the subring generated by degree-one classes comes from the base and does not contain the fiber class or the exceptional classes.
    
    (2) Since a real moment-angle manifold is stably parallelizable by~\cite{Buchstaber-Panov2015book}, its rational Pontryagin classes vanish.
    Hence, by the Hirzebruch signature theorem, its signature is zero.
    A rational or ruled four-manifold with signature zero has $b_2 = 2$, so the condition $b_2(\RZ_K)>2$ excludes both possibilities.
\end{proof}

\begin{proposition}\label{prop:no-missing-triangle}
    Let $K$ be a simplicial $3$-sphere with a missing triangle.
    If $K$ is not isomorphic to $\partial(P_3\times P_{m-3})^\ast$, then $\RZ_K$ is not symplectic.
\end{proposition}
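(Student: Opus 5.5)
The plan is to combine Lemma~\ref{lem:missing_triangle}, Lemma~\ref{lem:rational-ruled-exclusions}(2) and Corollary~\ref{cor:coordinate-sphere-obstruction} directly.

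Let $\omega\subset[m]$ be the vertex set of a missing triangle of $K$. Because $\omega$ is a missing face of cardinality three, all three edges of $\omega$ lie in $K$ but the $2$-simplex $\omega$ does not. Hence the full subcomplex $K_\omega$ is exactly the boundary of a triangle, that is, $K_\omega\cong\partial P_3^\ast$. This puts us in the hypotheses of Lemma~\ref{lem:missing_triangle}, which gives $b_2(\RZ_K)\ge 2$. It also says that equality $b_2(\RZ_K)=2$ forces $K\cong\partial(P_3\times P_{m-3})^\ast$. Since we assume $K$ is not of this form, we conclude $b_2(\RZ_K)>2$.

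Next we apply Lemma~\ref{lem:rational-ruled-exclusions}(2). Since $b_2(\RZ_K)>2$, the manifold $\RZ_K$ is neither rational nor ruled. Note that $\RZ_K$ is a closed orientable smooth $4$-manifold: it is a PL manifold because $K$ is a PL $3$-sphere (every simplicial $3$-sphere is PL), it is smoothable because we are in dimension at most $7$, and it is orientable by Corollary~\ref{cor:NN}. So Li's theorem applies in this setting.

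Finally, Corollary~\ref{cor:coordinate-sphere-obstruction} applies with the subset $\omega$ chosen above. The embedded coordinate sphere $\RZ_{K_\omega}\cong S^2$ has nonzero rational class and square zero by Lemma~\ref{lem:coordinate-surface-dual}. Together with the fact that $\RZ_K$ is neither rational nor ruled, this shows that $\RZ_K$ is not symplectic.

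There is no real obstacle here, because the substantive work is already contained in the lemmas. The one point to check is that ``missing triangle'' really yields an induced $\partial P_3^\ast$. This holds because fullness of $K_\omega$ means its simplices are exactly the simplices of $K$ contained in $\omega$, and among these the $2$-simplex $\omega$ itself is absent.
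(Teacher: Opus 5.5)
Your proof is correct and is the paper's argument: Lemma~\ref{lem:missing_triangle} together with the non-isomorphism hypothesis gives $b_2(\RZ_K)\ge 3$, Lemma~\ref{lem:rational-ruled-exclusions}(2) rules out rational and ruled, and Corollary~\ref{cor:coordinate-sphere-obstruction} concludes. Your additional checks, that a missing triangle gives an induced $\partial P_3^\ast$ and that $\RZ_K$ is a closed orientable smooth $4$-manifold, are details the paper leaves implicit.
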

\begin{proof}
    Since $K$ has a missing triangle, by Lemma~\ref{lem:missing_triangle} together with the assumption, we have $b_2(\RZ_K) \geq 3$.
    Then $\RZ_K$ is neither rational nor ruled by Lemma~\ref{lem:rational-ruled-exclusions}.
    By Corollary~\ref{cor:coordinate-sphere-obstruction}, $\RZ_K$ is not symplectic.
\end{proof}

\subsection{Missing-tetrahedron case}
We recall the connected-sum formula for real moment-angle manifolds under a simplicial connected sum.
\begin{lemma}[\cite{Buchstaber-Panov2015book}]\label{lem:RZ-connected-sum}
    Let $K_1$ and $K_2$ be simplicial $(n-1)$-spheres, and suppose that they contain a common facet $\sigma$ with $|\sigma|=n$.  
    Let $K=K_1\#_\sigma K_2$ be their simplicial connected sum along $\sigma$.  
    Put $a=|V(K_1)\setminus \sigma|$ and $b=|V(K_2)\setminus \sigma|$.
    Then 
    $$
        \RZ_K \cong \left(\#_{2^b}\RZ_{K_1}\right) \# \left(\#_{2^a}\RZ_{K_2}\right) \# \left(\#_{(2^a-1)(2^b-1)} S^1\times S^{n-1}\right).
    $$
\end{lemma}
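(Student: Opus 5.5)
The plan is to realize $\RZ_K$ for $K=K_1\#_\sigma K_2$ as a gluing of pieces of $\RZ_{K_1}$ and $\RZ_{K_2}$, and then read off the connected-sum decomposition. Write $V(K_1)=\sigma\sqcup A$ and $V(K_2)=\sigma\sqcup B$ with $|A|=a$, $|B|=b$. Then $K$ lives on $\sigma\sqcup A\sqcup B$, and $K=(K_1\setminus\sigma)\cup_{\partial\sigma}(K_2\setminus\sigma)$. Since $\RZ_K\subset(D^1)^{|\sigma|+a+b}$, I will split the coordinates into the $\sigma$-coordinates and the rest.

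\textbf{Step 1: local structure near the removed facet.} Removing the open facet $\sigma$ from $K_1$ corresponds to deleting the cells $(D^1)^\sigma\times\{\pm1\}^A$ of $\RZ_{K_1}$. These are $2^a$ disjoint open $n$-discs, so $\RZ_{K_1\setminus\sigma}$ is $\RZ_{K_1}$ with $2^a$ small open balls removed, and its boundary consists of $2^a$ copies of $S^{n-1}=\partial(D^1)^\sigma$.

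\textbf{Step 2: decompose $\RZ_K$.} A simplex of $K$ lies either in $K_1\setminus\sigma$ (using vertices in $\sigma\sqcup A$) or in $K_2\setminus\sigma$. Accordingly,
\[
\RZ_K=\bigl(\RZ_{K_1\setminus\sigma}\times\{\pm1\}^B\bigr)\cup\bigl(\RZ_{K_2\setminus\sigma}\times\{\pm1\}^A\bigr).
\]
The first part is $2^b$ copies of $\RZ_{K_1}$ minus $2^a$ balls, and the second is $2^a$ copies of $\RZ_{K_2}$ minus $2^b$ balls. They intersect in $\RZ_{\partial\sigma}\times\{\pm1\}^{A\sqcup B}$, which is $2^{a+b}$ spheres $S^{n-1}$. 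The sphere labelled $(\epsilon_A,\epsilon_B)$ bounds the hole $\epsilon_A$ in copy $\epsilon_B$ of $\RZ_{K_1}$, and the hole $\epsilon_B$ in copy $\epsilon_A$ of $\RZ_{K_2}$.

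\textbf{Step 3: gluing graph.} This is the key combinatorial step. Form the complete bipartite graph $K_{2^b,2^a}$ whose vertices are the punctured manifolds and whose edges are the $2^{a+b}$ gluing spheres. Gluing two punctured manifolds along a sphere gives a connected sum. Choose a spanning tree, which has $2^a+2^b-1$ edges; gluing along these edges yields $\#_{2^b}\RZ_{K_1}\#\#_{2^a}\RZ_{K_2}$. Each of the remaining $2^{a+b}-2^a-2^b+1=(2^a-1)(2^b-1)$ edges is a self-gluing of a connected manifold along two boundary spheres, which adds an $S^1\times S^{n-1}$ summand. This gives the stated formula.

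\textbf{Main obstacle.} The delicate point is orientation. A self-gluing produces $S^1\times S^{n-1}$ rather than the twisted bundle $S^1\tilde\times S^{n-1}$ only if the gluing map is orientation-compatible. Corollary~\ref{cor:NN} shows that every $\RZ_L$ is orientable, and the result of the gluing, $\RZ_K$, is orientable too. Because the total space is orientable, each self-gluing must be orientation-reversing relative to an orientation of the connected piece, which gives the untwisted product $S^1\times S^{n-1}$. I would also check smoothness and PL compatibility at the corners of the cubes, using that the decomposition is by PL manifolds glued along collared spheres.
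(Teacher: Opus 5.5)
Your argument is correct, and it is the standard cut-and-paste proof behind the Buchstaber--Panov citation; the paper gives no proof of its own. You cut $\RZ_K$ along the $2^{a+b}$ spheres $\partial(D^1)^\sigma\times\{\pm1\}^{A\sqcup B}$, read off the complete bipartite gluing graph, take a spanning tree for the connected sums, and use orientability of $\RZ_K$ to get untwisted $S^1\times S^{n-1}$ handles from the remaining $(2^a-1)(2^b-1)$ gluings.

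One small point should be added: the spanning-tree gluings are also insensitive to orientation, since each coordinate reflection is an orientation-reversing self-diffeomorphism of $\RZ_{K_i}$, so the unoriented iterated connected sums in the formula are well defined.
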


We now apply this to missing tetrahedra.

\begin{proposition}\label{prop:no-missing-tetrahedron}
    Let $K$ be a simplicial~$3$-sphere with a missing tetrahedron.
    Then $\RZ_K$ is not symplectic.
\end{proposition}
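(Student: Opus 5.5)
A missing tetrahedron $\sigma=\{a,b,c,d\}$ in a simplicial $3$-sphere $K$ means that $\partial\sigma\subset K$ is an embedded $2$-sphere which is not filled. By the Jordan--Alexander separation in the $3$-sphere $|K|$ (combinatorially, for a simplicial $3$-sphere), $\partial\sigma$ splits $K$ into two closed $3$-balls $B_1,B_2$ with common boundary $\partial\sigma$. The plan is to write $K=K_1\#_\sigma K_2$ with $K_i=B_i\cup\sigma$, each a simplicial $3$-sphere containing the facet $\sigma$. Since $\sigma\notin K$, neither ball is a single tetrahedron, so $a=|V(K_1)\setminus\sigma|\ge1$ and $b=|V(K_2)\setminus\sigma|\ge1$. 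Lemma~\ref{lem:RZ-connected-sum} then gives
$$
    \RZ_K\cong\left(\#_{2^b}\RZ_{K_1}\right)\#\left(\#_{2^a}\RZ_{K_2}\right)\#\left(\#_{(2^a-1)(2^b-1)}S^1\times S^3\right),
$$
with $(2^a-1)(2^b-1)\ge1$. In particular $\RZ_K$ is a nontrivial smooth connected sum: at least $2^b+2^a\ge4$ summands appear, together with at least one $S^1\times S^3$.

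Next I would apply Proposition~\ref{prop:KMT-connected-sum}. Suppose $\RZ_K$ were symplectic. Group the decomposition as $X\#Y$ with $X$ one summand and $Y$ the rest, in all possible ways. For every such grouping, one side must be negative definite and algebraically simply connected. The key point is that every summand has vanishing signature and is not algebraically simply connected.

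For the signature: each $\RZ_{K_i}$ is stably parallelizable, so $\sigma=0$, as in the proof of Lemma~\ref{lem:rational-ruled-exclusions}, and $S^1\times S^3$ has $\sigma=0$. Hence any negative definite side must have $b_2=0$. For the fundamental group: $\pi_1(S^1\times S^3)=\Z$ surjects onto $\Z_2$, and any side containing an $S^1\times S^3$ summand has $\pi_1$ a free product with $\Z$, hence a $\Z_2$ quotient. For $\RZ_{K_i}$, I would use that $H_1(\RZ_{K_i};\Z)$ is nonzero whenever $K_i$ is not a simplex boundary of the right size; by Hochster, $H^1$ corresponds to disconnected full subcomplexes $K_{\{u,v\}}$ with $u,v$ nonadjacent. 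Since $a\ge1$, a vertex outside $\sigma$ exists, and as $K_i\ne\partial\Delta^4$ in the relevant cases there are nonadjacent pairs.

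To avoid this case analysis, I would choose the splitting $X=S^1\times S^3$ and $Y$ the remainder. $X$ has $\pi_1=\Z$, so it is not algebraically simply connected. $Y$ contains another copy of some $\RZ_{K_1}$ or of $S^1\times S^3$; more simply, $\pi_1(Y)$ surjects onto $\pi_1$ of any single summand, and in particular $Y$ contains at least $2^b\ge2$ copies of $\RZ_{K_1}$ plus further pieces. If $a=b=1$, then $K_1,K_2$ are $\partial\Delta^4$, so $\RZ_{K_i}=S^4$. Then the decomposition is $\#_2S^4\#_2S^4\# (S^1\times S^3)$, i.e.\ $\RZ_K\cong S^1\times S^3$, which has $b_2=0$ and hence is not symplectic. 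Otherwise $(2^a-1)(2^b-1)\ge3$, so $Y$ still contains an $S^1\times S^3$ summand and $\pi_1(Y)$ surjects onto $\Z$. Neither side is algebraically simply connected, contradicting Proposition~\ref{prop:KMT-connected-sum}.

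The main obstacle I expect is the first step, namely justifying rigorously that a missing tetrahedron yields a simplicial connected sum decomposition with $a,b\ge1$. This needs the separation of $|K|$ by the embedded PL $2$-sphere $\partial\sigma$ into two balls, which uses that $3$-dimensional simplicial spheres are PL (as noted earlier) together with the PL Schoenflies theorem in dimension $3$ (Alexander). The closing of each ball by $\sigma$ must give a simplicial sphere in which $\sigma$ is a facet. A minor point is checking that the $(K_1,K_2)$ vertex sets intersect exactly in $\sigma$, which follows because $\partial\sigma$ is the common boundary.
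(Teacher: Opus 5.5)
Your proof is correct and follows the paper's route: you split $K=K_1\#_\sigma K_2$ along the missing tetrahedron with $a,b\ge1$, use Lemma~\ref{lem:RZ-connected-sum} to split off an $S^1\times S^3$ summand, and contradict Proposition~\ref{prop:KMT-connected-sum}. The only difference is how you exclude the complementary summand $Y$: the paper notes $b_2(Y)>0$ and that $[\Omega]^2>0$ keeps $Y$ from being negative definite, with no case split, whereas you show $Y$ is not algebraically simply connected because it contains further $S^1\times S^3$ summands when $(a,b)\ne(1,1)$, and you handle $(a,b)=(1,1)$, where $\RZ_K\cong S^1\times S^3$, via $b_2=0$. (Your earlier remark that every summand fails to be algebraically simply connected is false when some $K_i=\partial\Delta^4$, since then $\RZ_{K_i}\cong S^4$, but your final argument does not use it.)
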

\begin{proof}
    Let $\omega=\{v_1,v_2,v_3,v_4\}\subset V(K)$ be a missing tetrahedron of $K$.
    The subcomplex $K_\omega\cong S^2$ is a simplicial embedded $2$-sphere in $|K|\cong S^3$.  
    Hence it separates $|K|$ into two simplicial $3$-balls.
    Filling $K_\omega$ by the simplex $\Delta_\omega^3$ on each side gives two simplicial~$3$-spheres, say $K_+$ and $K_-$, such that
    $$
        K=K_+\#_\omega K_-.
    $$
    
    Put $A=V(K_+)\setminus \omega$ and $B=V(K_-)\setminus \omega$.
    Both $A$ and $B$ are non-empty.  
    Indeed, if one side had no vertices outside $\omega$, then that side would be a triangulated $3$-ball with boundary $\partial\Delta_\omega^3$ and with no vertices other than those of $\omega$.  
    The only such simplicial $3$-ball is $\Delta_\omega^3$, which would imply $\omega\in K$, contradicting the assumption that $\omega$ is a missing tetrahedron.
    
    Applying Lemma~\ref{lem:RZ-connected-sum} with $n=4$, $a=|A|\geq 1$, and $b=|B|\geq 1$, we obtain 
    $$
        \RZ_K\cong\left(\#_{2^b}\RZ_{K_+}\right) \# \left(\#_{2^a}\RZ_{K_-}\right) \# \left(\#_{(2^a-1)(2^b-1)} S^1\times S^3\right) \cong Y\#(S^1\times S^3),
    $$
    for some closed oriented smooth $4$-manifold $Y$.
    
    Suppose that $\RZ_K$ is symplectic with symplectic form~$\Omega$.
    Then $[\Omega]^2>0$, so $b_2(Y)>0$, since $H^2(S^1\times S^3;\R)=0$.
    By Proposition~\ref{prop:KMT-connected-sum}, one summand in this connected-sum decomposition must be negative definite and algebraically simply connected.
    
    The summand $S^1\times S^3$ is not algebraically simply connected, since $\pi_1(S^1\times S^3)\cong\Z$ has nontrivial finite quotients.
    On the other hand, $Y$ cannot be negative definite, because the symplectic class has positive square and $H^2(\RZ_K;\R)\cong H^2(Y;\R)$.
    This is a contradiction.
\end{proof}

Combining Propositions~\ref{prop:no-missing-triangle} and~\ref{prop:no-missing-tetrahedron}, together with the case $K=\partial\Delta^4$ discussed above, we obtain the following theorem.
\begin{theorem}\label{thm:only_flag_support_symplectic_RZ}
    Let $K$ be a simplicial~$3$-sphere on $[m]$.
    If $\RZ_K$ is symplectic, then $K$ is either isomorphic to $\partial (P_3 \times P_{m-3})^\ast$ or flag.
\end{theorem}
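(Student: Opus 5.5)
The theorem follows by contraposition from a case analysis on the minimal non-faces of $K$, using the three results already established in this section. Assume $\RZ_K$ is symplectic and $K$ is not flag. We must show $K\cong\partial(P_3\times P_{m-3})^\ast$. Since $K$ is not flag, it has a missing face $\omega$ with $|\omega|\ge 3$. Because $K$ is a simplicial $3$-sphere, $|\omega|\in\{3,4,5\}$: a missing face spans a full subcomplex isomorphic to $\partial\Delta^{|\omega|-1}$, which is a sphere of dimension at most $3$ inside $|K|\cong S^3$. We treat the three cardinalities in turn.

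\emph{Case $|\omega|=5$.} Here $K_\omega\cong\partial\Delta^4$ is a $3$-sphere embedded in the $3$-sphere $|K|$, so $K=\partial\Delta^4$. Then $\RZ_K\cong S^4$, which has $b_2=0$ and so is not even c-symplectic, by Proposition~\ref{prop-four-manifold-csymp}. This contradicts the assumption.

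\emph{Case $|\omega|=4$.} Then $K$ has a missing tetrahedron, and Proposition~\ref{prop:no-missing-tetrahedron} says $\RZ_K$ is not symplectic. This again contradicts the assumption.

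\emph{Case $|\omega|=3$.} Then $K$ has a missing triangle, that is, $K_\omega\cong\partial P_3^\ast$. By Proposition~\ref{prop:no-missing-triangle}, if $K\not\cong\partial(P_3\times P_{m-3})^\ast$ then $\RZ_K$ is not symplectic. Hence symplecticity forces $K\cong\partial(P_3\times P_{m-3})^\ast$, which is the exceptional alternative in the statement.

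These cases are exhaustive, so the theorem follows. The substantive work has already been done in Propositions~\ref{prop:no-missing-triangle} and~\ref{prop:no-missing-tetrahedron}. The only step needing care here is the bound $|\omega|\le 5$ together with the identification of the $|\omega|=5$ case. Both rest on the standard fact that a full subcomplex of a simplicial $3$-sphere cannot be a sphere of dimension greater than $3$, and that a $3$-sphere embedded in $S^3$ must be all of it, by invariance of domain.
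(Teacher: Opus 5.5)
Your proposal is correct and is the paper's own argument: the paper likewise splits a non-flag $K$ by the cardinality of a missing face. Cardinality $5$ forces $K=\partial\Delta^4$ and $\RZ_K\cong S^4$; cardinality $4$ is handled by Proposition~\ref{prop:no-missing-tetrahedron} and cardinality $3$ by Proposition~\ref{prop:no-missing-triangle}, and the theorem is the combination of these three cases.
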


\begin{corollary}\label{cor:only_flag_support_symplectic_small}
    Every symplectic four-dimensional small cover is aspherical.
\end{corollary}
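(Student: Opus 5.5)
Let $M=M^\R(K,\lambda)=\RZ_K/G_\lambda$ be a symplectic four-dimensional small cover over the simplicial $3$-sphere $K$. The strategy is to show that $K$ is flag. Asphericity then follows from the theorem of Davis--Januszkiewicz (via Gromov's link condition): for flag $K$ the right-angled cubical structure on $\RZ_K$ is nonpositively curved. Hence $\RZ_K$ is aspherical, and so is its finite quotient $M$, because the universal cover of $M$ coincides with that of $\RZ_K$.

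To show flagness, first apply Proposition~\ref{prop:canonical-cover-obstruction}: since $M$ is symplectic, $\RZ_K$ is symplectic. Theorem~\ref{thm:only_flag_support_symplectic_RZ} then says that $K$ is either flag or isomorphic to $\partial(P_3\times P_{m-3})^\ast$. It therefore suffices to exclude the case $K=\partial P_3^\ast\join\partial P_{q}^\ast$ with $q=m-3$, where $M$ is a small cover over $\Delta^2\times P_q$.

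To exclude this case, I will show that no small cover over $\Delta^2\times P_q$ is even c-symplectic, using Proposition~\ref{prop-four-manifold-csymp}. Label the vertex set as $\omega_0\sqcup\omega_0^c$, with $\omega_0$ the triangle and $\omega_0^c$ the $q$-cycle. For a join, the full subcomplex on $\omega\subset[m]$ is $K_{\omega\cap\omega_0}\join K_{\omega\cap\omega_0^c}$. A join has nonzero $\widetilde H^1$ only when one factor is empty and the other has nonzero $\widetilde H^1$, or when both factors have nonzero $\widetilde H^0$. The factor $\partial\Delta^2$ has disconnected full subcomplexes only if the subset is empty, and it has nonzero $\widetilde H^1$ only for all of $\omega_0$. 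Hence the only weights contributing to $b_2$ are $\omega_0$ and $\omega_0^c$, consistent with $b_2(\RZ_K)=2$ from Lemma~\ref{lem:missing_triangle}. So $M$ is c-symplectic only if $\omega_0$ or $\omega_0^c$ lies in $\row(\lambda)$, together with $\one_m\in\row(\lambda)$. Since $\one_m=\omega_0+\omega_0^c$, this means both indicator vectors must lie in $\row(\lambda)$.

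The main step is then a linear-algebra argument showing that $\omega_0\notin\row(\lambda)$. By Lemma~\ref{lem:row-kernel-duality}, it suffices to find $g\in\ker\lambda$ with $\langle\omega_0,g\rangle=1$. Any two of the three triangle vertices together with an edge of the $q$-gon form a facet, so any two of the three columns $\lambda(e_i)$, $i\in\omega_0$, are independent. If the three columns summed to zero, then $g=\omega_0$ itself would satisfy $\langle\omega_0,g\rangle=3=1$ and we would be done. Otherwise the three columns span a $3$-dimensional space, but I do not yet see a direct argument and would instead attack $\omega_0\in\row(\lambda)$ directly. Suppose some row combination $r$ equals $\omega_0$. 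Then the linear functional $r$ takes value $1$ on each triangle column and $0$ on each polygon column. At a vertex $\{a,b\}\cup\{x,y\}$, the four columns $\lambda_a,\lambda_b,\lambda_x,\lambda_y$ form a basis. Adding the third triangle column $\lambda_c$, I would derive a contradiction by parity: write $\lambda_c$ in the basis $\lambda_a,\lambda_b,\lambda_x,\lambda_y$ and evaluate $r$. This gives $1$ equal to the sum of the coefficients on $\lambda_a,\lambda_b$, and it forces constraints as the edge $\{x,y\}$ varies around the $q$-cycle.

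I expect this final parity contradiction to be the main obstacle. It may require the standard normal form $\lambda=(I_4\mid A)$ at a chosen vertex and a case check on the three triangle columns, and if it fails I would instead argue via the orientability condition combined with the $\Z_2$-cohomology ring~\eqref{eq:mod2cohom_smallcover}.
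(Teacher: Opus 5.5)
Your reduction is the paper's own. You pass to $\RZ_K$ via Proposition~\ref{prop:canonical-cover-obstruction}, apply Theorem~\ref{thm:only_flag_support_symplectic_RZ}, and exclude $\partial P_3^\ast\join\partial P_q^\ast$ by showing it carries no c-symplectic small cover. Your identification of $\omega_0,\omega_0^c$ as the only weights with $\widetilde H^1\neq 0$ is correct, as is the conclusion that both must lie in $\row(\lambda)$ and the asphericity argument for flag $K$.

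The gap is the decisive step, $\omega_0\notin\row(\lambda)$, which you never prove. You say you ``do not yet see a direct argument'' and leave the parity contradiction as an expected obstacle. Without it the exceptional case is not excluded, so the proposal does not yet prove the corollary. This step is exactly Lemma~\ref{lem:odd-factor-not-compatible}, which feeds Proposition~\ref{prop:triangle-factor-not-csymp}. You already have the right facets, but you use pairwise independence of the triangle columns in $\Z_2^4$. You need it modulo the span of a polygon edge.

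The missing idea is to keep one polygon edge $\{x,y\}$ fixed and use all three simplices $\{a,b,x,y\}$, $\{a,c,x,y\}$, $\{b,c,x,y\}$ of $K$; varying $\{x,y\}$ around the $q$-cycle is not needed. In your notation, write $\lambda_c=\alpha\lambda_a+\beta\lambda_b+\gamma\lambda_x+\delta\lambda_y$.
\begin{itemize}
\item If $\beta=0$, then $\lambda_c\in\langle\lambda_a,\lambda_x,\lambda_y\rangle$, contradicting that $\{a,c,x,y\}$ is a simplex.
\item If $\alpha=0$, the same contradiction arises with $\{b,c,x,y\}$.
\end{itemize}
Hence $\alpha=\beta=1$. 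Applying your functional $r$, which is $1$ on triangle columns and $0$ on polygon columns, gives $1=r(\lambda_c)=\alpha+\beta=0$, a contradiction.

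Equivalently, in the $2$-dimensional quotient $\Z_2^4/\langle\lambda_x,\lambda_y\rangle$ the three triangle columns are pairwise independent. They are therefore the three nonzero vectors and sum to zero, so $\lambda_a+\lambda_b+\lambda_c=s\lambda_x+t\lambda_y$ for some $s,t\in\Z_2$. Then $g=e_a+e_b+e_c+se_x+te_y\in\ker\lambda$ satisfies $\langle\omega_0,g\rangle=1$, which settles your ``otherwise'' case through Lemma~\ref{lem:row-kernel-duality}. The paper phrases the same fact as follows: the images of the triangle columns in that quotient would have to properly $2$-color an odd cycle inside the two-element affine line $\{\varphi=1\}$, which is impossible.
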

\begin{proof}
    Let $M$ be a symplectic four-dimensional small cover over a simplicial~$3$-sphere $K$.

    By Proposition~\ref{prop:canonical-cover-obstruction}, $\RZ_K$ is symplectic.
    Hence Theorem~\ref{thm:only_flag_support_symplectic_RZ} implies that $K$ is either flag or isomorphic to $\partial(P_3\times P_{m-3})^\ast$.

    The second case is excluded by Proposition~5.2 below, which shows that $\partial(P_3\times P_{m-3})^\ast$ supports no c-symplectic small cover.
    Since every symplectic manifold is c-symplectic, this rules out the exceptional case.

    Therefore $K$ is flag.
    Hence $M$ is aspherical by~\cite{Davis2012}.
\end{proof}

\section{Four-dimensional small covers over products of two polygons} \label{sec-four-dimensional-products}

Throughout this section we consider a simple polytope of the form 
$$
    P=P_{m_1} \times P_{m_2},
$$
where $P_{q}$ is a $q$-gon.
We may assume that $m_1 \leq m_2$.
We write $m=m_1 + m_2$, and decompose the facet set of $P$ as
$$
    \cF(P)=\{F_{1,1},\dots,F_{1,m_1}\} \sqcup \{F_{2,1},\dots,F_{2,m_2}\}
$$
where the block $\{F_{i,1},\dots,F_{i,m_i}\}$ comes from the $i$th polygon factor.

We identify each facet $F_{i,j}$ with the corresponding standard basis vector of $\Z_2^m$.
With this convention, for each $i=1,2$, we set the factor weights
$$
    \chi_i = F_{i,1}+ \dots + F_{i,m_i} \in \Z_2^m.
$$
We also write $\one_m= \sum_{F \in \cF(P)} F = \chi_1 + \chi_2  \in \Z_2^m$.

Let $K=\partial P^\ast$. 
As shown in Example~\ref{ex:RZ_polygon}, 
\begin{equation} \label{eq:RZK_productpolygon}
    \RZ_K \cong \RZ_{\partial P_{m_1}^\ast} \times \RZ_{\partial P_{m_2}^\ast} \cong \Sigma_{\genus(m_1)} \times \Sigma_{\genus(m_2)} =: \Sigma_1 \times \Sigma_2
\end{equation}
where $\Sigma_i=\Sigma_{\genus(m_i)}$ is a closed orientable surface of genus $\genus(m_i)=1+(m_i-4)2^{m_i-3}$.

Let $\lambda\colon \Z_2^m\to \Z_2^4$ be a mod~$2$ characteristic map on $K$, and let 
$$
    M=M(P,\lambda)=M^\R(K,\lambda) = \RZ_K/G_\lambda
$$
be the corresponding small cover, where $G_\lambda = \ker\lambda$.
Moreover, by Corollary~\ref{cor:NN}, $M$ is orientable if and only if $\one_m \in \row(\lambda)$.

We first show that triangle factors cannot occur in c-symplectic small covers over products of polygons.
\begin{lemma}\label{lem:odd-factor-not-compatible}
    Let $P = P_{m_1} \times P_{m_2}$ be a product of two polygons, and let $\lambda$ be a mod~$2$ characteristic map over $P$.
    If  $\chi_i \in \row(\lambda)$, then $m_i$ is even.
\end{lemma}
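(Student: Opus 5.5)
The plan is to translate the condition $\chi_i\in\row(\lambda)$ into a statement about a linear functional, and then reduce to a $2$-dimensional characteristic map on the $i$th polygon factor. By symmetry it suffices to treat $i=1$.

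First, I unwind the hypothesis. An element of $\row(\lambda)$ has the form $r\lambda$ for a row vector $r$, that is, a linear functional $\varphi\colon\Z_2^4\to\Z_2$ evaluated on the columns of $\lambda$. So $\chi_1\in\row(\lambda)$ means there is a functional $\varphi$ with
$$
\varphi(\lambda(F_{1,j}))=1 \ \text{ for all } j, \qquad \varphi(\lambda(F_{2,k}))=0 \ \text{ for all } k.
$$
Equivalently one could use Lemma~\ref{lem:row-kernel-duality}, but the functional formulation is more convenient here.

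Second, I pass to a quotient. Fix two adjacent edges $F_{2,a},F_{2,a+1}$ of the second polygon. For every $j$ (indices mod $m_1$), the four facets $F_{1,j},F_{1,j+1},F_{2,a},F_{2,a+1}$ meet at a vertex of $P$. Hence their characteristic vectors form a basis of $\Z_2^4$. Let
$$
V=\Z_2^4/\langle\lambda(F_{2,a}),\lambda(F_{2,a+1})\rangle\cong\Z_2^2,
$$
and let $\mu_j$ be the image of $\lambda(F_{1,j})$ in $V$. The basis property shows that $\mu_j,\mu_{j+1}$ form a basis of $V$ for every $j$. In particular each $\mu_j$ is nonzero and $\mu_j\neq\mu_{j+1}$. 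Since $\varphi$ vanishes on $\lambda(F_{2,a})$ and $\lambda(F_{2,a+1})$, it descends to a functional $\bar\varphi$ on $V$ with $\bar\varphi(\mu_j)=1$ for all $j$. In particular $\bar\varphi\neq 0$.

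Third, I count. A nonzero functional on $\Z_2^2$ takes the value $1$ on exactly two vectors. So every $\mu_j$ lies in a two-element set $\{x,y\}$. Consecutive $\mu_j$ are distinct, so the sequence $\mu_1,\mu_2,\dots,\mu_{m_1}$ alternates strictly between $x$ and $y$ around the cycle of length $m_1$. This closes up consistently ($\mu_{m_1}\neq\mu_1$) only if $m_1$ is even.

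The argument for $i=2$ is identical, with the roles of the two factors exchanged. I expect no real obstacle. The only point needing care is checking that the quadruples $F_{1,j},F_{1,j+1},F_{2,a},F_{2,a+1}$ really are vertices of $P_{m_1}\times P_{m_2}$; this holds because the vertices of the product are exactly the pairs of vertices of the two factors.
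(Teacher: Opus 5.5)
Your proposal is correct and takes the same approach as the paper. The paper also realizes $\chi_i$ as a functional $\varphi$ and passes to the quotient by the $2$-dimensional span $W$ of two adjacent characteristic vectors from the other factor. It then observes that the images of the $i$th polygon's vectors lie in the two-element set $\{\varphi=1\}$, must alternate around the cycle, and therefore force $m_i$ to be even.
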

\begin{proof}
    Since $\chi_i \in \row(\lambda)$, there is a linear functional
    $$
        \varphi\colon \Z_2^{4}\to \Z_2
    $$
    whose values on the columns of $\lambda$ are prescribed by $\chi_i$.
    
    Choose a vertex in the other polygon factor.
    It is given by two adjacent facets.
    Let $W \subset \Z_2^{4}$ be the span of their characteristic vectors.
    By the characteristic condition, $\dim W=2$, and by the definition of $\chi_i$ we have $W\subset \ker\varphi$.
    
    In the quotient $\Z_2^{4}/W$, the images of the characteristic vectors from the $i$th polygon all lie in the affine line
    $$
        \{v\in \Z_2^{4}/W\mid \varphi(v)=1\},
    $$
    which has two elements.
    Adjacent facets of the $i$th polygon have distinct image, again by the characteristic condition.
    Hence the two possible images must alternate around the polygon.
    This is possible only when $m_i$ is even.
\end{proof}

\begin{proposition}\label{prop:triangle-factor-not-csymp}
    Let $P = P_{3} \times P_{m-3}$ be a product of two polygons.
    Then no small cover over $P$ is c-symplectic.
\end{proposition}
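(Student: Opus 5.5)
The plan is to use the cohomological criterion of Proposition~\ref{prop-four-manifold-csymp}. Let $M=M(P,\lambda)$ be a small cover over $P=P_3\times P_{m-3}$, where $K=\partial P_3^\ast\join\partial P_{m-3}^\ast$. If $M$ were c-symplectic, we would need $\one_m\in\row(\lambda)$ and some $\omega\in\row(\lambda)$ with $\widetilde H^1(K_\omega;\Q)\neq 0$. I will first determine all $\omega\subset[m]$ with $\widetilde H^1(K_\omega;\Q)\ne0$. I will then show that no such $\omega$ lies in $\row(\lambda)$ once $\one_m$ does.

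\emph{Step 1: the relevant weights.} Write $\omega=\omega_1\sqcup\omega_2$ according to the two factors. Full subcomplexes of a join are joins of full subcomplexes, so $K_\omega=K_{\omega_1}\join K_{\omega_2}$, where $K_{\omega_1}$ is a full subcomplex of $\partial P_3^\ast$ and $K_{\omega_2}$ one of $\partial P_{m-3}^\ast$. By the K\"unneth formula for joins, $\widetilde H^{1}(A\join B)\cong\bigoplus_{i+j=0}\widetilde H^i(A)\otimes\widetilde H^j(B)$, with the convention $\widetilde H^{-1}(\varnothing)=\Q$. This leaves three possibilities:
\begin{itemize}
\item $\omega_2=\varnothing$ and $\widetilde H^1(K_{\omega_1})\ne0$;
\item $\omega_1=\varnothing$ and $\widetilde H^1(K_{\omega_2})\ne0$;
\item both $\widetilde H^0(K_{\omega_1})$ and $\widetilde H^0(K_{\omega_2})$ are nonzero.
\end{itemize}
Any two vertices of $\partial P_3^\ast$ are adjacent, so every nonempty full subcomplex of it is connected and the third case is impossible. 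A full subcomplex of a polygon boundary has nonzero $\widetilde H^1$ only if it is the whole polygon. Hence the only candidates are $\omega=\chi_1$ and $\omega=\chi_2$.

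\emph{Step 2: excluding them.} Suppose $\one_m\in\row(\lambda)$. Since $\one_m=\chi_1+\chi_2$ and $\row(\lambda)$ is a subspace, $\chi_1\in\row(\lambda)$ if and only if $\chi_2\in\row(\lambda)$. By Lemma~\ref{lem:odd-factor-not-compatible}, $\chi_1\in\row(\lambda)$ would force $m_1=3$ to be even, which is absurd. Hence neither $\chi_1$ nor $\chi_2$ lies in $\row(\lambda)$. The Hochster-type decomposition~\eqref{eq:Hochster_decom} then gives $b_2(M)=0$.

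\emph{Conclusion.} Either $M$ is non-orientable, or $M$ is orientable with $b_2(M)=0$. In both cases Proposition~\ref{prop-four-manifold-csymp} shows that $M$ is not c-symplectic. The only step needing care is the join computation in Step~1, in particular the bookkeeping of the empty-set convention. Everything else is immediate from the results already established.
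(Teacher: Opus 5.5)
Your proposal is correct and follows essentially the same route as the paper: you show that $\chi_1$ and $\chi_2$ are the only weights with $\widetilde H^1(K_\omega;\Q)\neq 0$, then use $\one_m=\chi_1+\chi_2$ together with Lemma~\ref{lem:odd-factor-not-compatible} to rule out both. The only difference is presentation. You make the join K\"unneth bookkeeping explicit, where the paper just notes that the triangle has no disconnected proper full subcomplex, and you phrase the conclusion contrapositively as ``non-orientable, or orientable with $b_2(M)=0$.''
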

\begin{proof}
    Let $K=\partial P^\ast$ and let $M=\RZ_K/G_\lambda$ be a c-symplectic small cover over~$K$.
    By Proposition~\ref{prop-four-manifold-csymp}, there exists $\omega \in \row(\lambda)$ such that $\widetilde{H}^1( K_\omega;\Q) \neq 0$.
    
    Since $K=\partial P_3^\ast\join \partial P_{m-3}^\ast$, every full subcomplex of $K$ is a join of full subcomplexes of the two factors.
    The triangle factor $\partial P_3^\ast$ has no disconnected proper full subcomplex. Hence the only weights $\omega$ with $\widetilde H^1(K_\omega;\Q)\ne0$ are $\chi_1$ and $\chi_2$.
    Thus either $\chi_1\in\row(\lambda)$ or $\chi_2\in\row(\lambda)$.
    Since $\one_m=\chi_1+\chi_2\in\row(\lambda)$, it follows that both $\chi_1$ and $\chi_2$ belong to $\row(\lambda)$.
    
    This contradicts Lemma~\ref{lem:odd-factor-not-compatible}.
    Therefore $M$ is not c-symplectic.
\end{proof}

For $P=P_4\times P_4$, set
$$
    \delta_{1,1}=F_{1,1}+F_{1,3} \quad \text{ and } \quad \delta_{1,2}=F_{1,2}+F_{1,4},
$$
and
$$
    \delta_{2,1}=F_{2,1}+F_{2,3} \quad \text{ and } \quad \delta_{2,2}=F_{2,2}+F_{2,4}.
$$
These are the four opposite-pair vectors.

\begin{definition}
    Fix the product decomposition $P=P_{m_1}\times P_{m_2}$.
    With notation as above, we call the small cover $M(P,\lambda)$ \emph{factor-compatible} if the following two conditions hold.
    \begin{enumerate}
        \item If $(m_1,m_2)=(4,4)$, then the four vectors $\delta_{1,1},\delta_{1,2},\delta_{2,1},\delta_{2,2}$ can be divided into two pairs whose sums both lie in $\row(\lambda)$.
        \item If $(m_1,m_2)\neq(4,4)$, then both $\chi_1$ and $\chi_2$ belong to $\row(\lambda)$.
    \end{enumerate}
\end{definition}

\begin{proposition}\label{prop:factor-compatible-symplectic}
    Let $P=P_{m_1}\times P_{m_2}$ be a product of polygons, and let $M=M(P,\lambda)$ be a factor-compatible small cover.
    Then $M$ is symplectic.
\end{proposition}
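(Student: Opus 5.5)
The plan is to apply Proposition~\ref{prop:canonical-cover-obstruction}: I will construct an explicit $G_\lambda$-invariant symplectic form on $\RZ_K\cong\Sigma_1\times\Sigma_2$. The group $G_\lambda=\ker\lambda\subset\Z_2^m$ acts on $\RZ_K\subset(D^1)^m$ by coordinate sign changes. An element $g=(g_1,g_2)$, split according to the two facet blocks, acts componentwise: $g_i$ acts on $\Sigma_i=\RZ_{\partial P_{m_i}^\ast}\subset(D^1)^{m_i}$ by reflecting the coordinates in $\supp(g_i)$.

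The key preliminary fact is this. For a coordinate surface or circle whose orientation class lives in the Hochster summand of weight $\omega$, an element $g$ preserves the orientation if $\langle\omega,g\rangle=0$ and reverses it if $\langle\omega,g\rangle=1$. This holds because each coordinate reflection acts on the weight-$\omega$ summand of Theorem~\ref{thm:CP} by the sign $(-1)^{\langle\omega,g\rangle}$; one can also check it directly on the cubical cell structure. Lemma~\ref{lem:row-kernel-duality} then converts membership in $\row(\lambda)$ into the statement $\langle\omega,g\rangle=0$ for all $g\in G_\lambda$.

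\emph{Case $(m_1,m_2)\neq(4,4)$.} Since $\chi_1,\chi_2\in\row(\lambda)$, every $g\in G_\lambda$ acts by an orientation-preserving diffeomorphism on each factor $\Sigma_i$, because the fundamental class of $\Sigma_i$ has weight $\chi_i$. The image $H_i$ of $G_\lambda$ in $\mathrm{Diff}(\Sigma_i)$ is a finite group of orientation-preserving diffeomorphisms. Averaging any positive area form on $\Sigma_i$ over $H_i$ therefore gives an $H_i$-invariant area form $\sigma_i$, which is still positive since each summand is positive. Then $\widetilde\Omega=\mathrm{pr}_1^\ast\sigma_1+\mathrm{pr}_2^\ast\sigma_2$ is closed and nondegenerate, and it is $G_\lambda$-invariant because the action is a product action. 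Proposition~\ref{prop:canonical-cover-obstruction} then finishes this case.

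\emph{Case $(m_1,m_2)=(4,4)$.} Here $\partial P_4^\ast=S^0\join S^0$, so each $\Sigma_i=T^2$ splits further as $C_{i,1}\times C_{i,2}$. The circle $C_{i,j}$ is the real moment-angle complex of the pair of opposite facets recorded by $\delta_{i,j}$, and it lives in those two coordinates. So $\RZ_K=C_{1,1}\times C_{1,2}\times C_{2,1}\times C_{2,2}$, and $G_\lambda$ acts factorwise by reflections of the square circles. I identify each square circle with a round circle so that the two coordinate reflections become isometric reflections; this identification is equivariant. Then $g$ pulls back the angular form $d\theta_{i,j}$ to $(-1)^{\langle\delta_{i,j},g\rangle}d\theta_{i,j}$. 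If the pairing is $\{\delta_a,\delta_b\},\{\delta_c,\delta_d\}$ with $\delta_a+\delta_b,\ \delta_c+\delta_d\in\row(\lambda)$, then every $g\in G_\lambda$ flips $d\theta_a$ and $d\theta_b$ simultaneously or neither, and likewise for $c,d$. Hence $\widetilde\Omega=d\theta_a\wedge d\theta_b+d\theta_c\wedge d\theta_d$ is a $G_\lambda$-invariant linear symplectic form on $T^4$. This covers all three possible pairings, including the pairing $\{\chi_1,\chi_2\}$, so again Proposition~\ref{prop:canonical-cover-obstruction} applies.

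The main obstacle is making the sign rule rigorous: that the action of $g$ on the orientation, or on the invariant $1$-form, of the relevant coordinate submanifold is $(-1)^{\langle\omega,g\rangle}$. This is also where the choice of equivariant round models matters. I would verify the sign rule directly on the polygon surfaces $\Sigma_i$ and on the square circles. A single reflection $x_j\mapsto -x_j$ reverses the orientation of the $2$-cells of the cubical structure on $\RZ_{\partial P_q^\ast}$, and the top class is the sum over all cells with compatible signs, which gives the sign $-1$ per reflected coordinate and hence the parity $\langle\chi_i,g\rangle$. For the square circle, the two reflections are visibly reflections of a round circle.
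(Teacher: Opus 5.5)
Your proof is correct, and it departs from the paper only in the case $(m_1,m_2)=(4,4)$.

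For $(m_1,m_2)\neq(4,4)$ your argument is the paper's. Both $\chi_i\in\row(\lambda)$ make every $g\in G_\lambda$ orientation-preserving on each $\Sigma_i$. One then averages area forms, takes $\mathrm{pr}_1^\ast\sigma_1+\mathrm{pr}_2^\ast\sigma_2$, and descends it via Proposition~\ref{prop:canonical-cover-obstruction}. The paper simply asserts that the orientation character is $(-1)^{\langle\chi_i,g\rangle}$; you justify it, via the isotypic reading of the Hochster decomposition or the reflection-type behaviour of coordinate reflections.

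For $(4,4)$ the paper argues differently. It observes that the only weights with $\widetilde H^1(K_\omega;\Q)\neq 0$ are sums $\delta_a+\delta_b$. By Proposition~\ref{prop-four-manifold-csymp}, factor-compatibility is then exactly c-symplecticity, and the paper invokes Ishida's theorem for real Bott manifolds. You instead split $\RZ_K$ as $(S^1)^4$ and write down the explicit flat form $d\theta_a\wedge d\theta_b+d\theta_c\wedge d\theta_d$. Its $G_\lambda$-invariance is literally the condition $\delta_a+\delta_b,\ \delta_c+\delta_d\in\row(\lambda)$.

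What your route buys: it is self-contained and uniform with the other case, and it produces an explicit invariant form. It also shows that Ishida's theorem is not needed here at all. In fact, the converse direction of Theorem~\ref{thm-two-polygons-factor-compatible} for $(4,4)$ uses only the trivial implication symplectic $\Rightarrow$ c-symplectic plus the Hochster computation. So your construction would remove the dependence on Ishida from the whole classification.

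What the paper's route buys: it is shorter, and it records the identification of factor-compatibility with c-symplecticity for the cube, which it reuses in that converse.

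One wording fix: the pairing $\{\delta_{1,1},\delta_{1,2}\},\{\delta_{2,1},\delta_{2,2}\}$ is the one whose sums are $\chi_1$ and $\chi_2$. It is not ``the pairing $\{\chi_1,\chi_2\}$''.
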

\begin{proof}
    If $(m_1,m_2)=(4,4)$, then the factor-compatible condition is exactly the c-symplectic condition in Ishida's theorem, written in the facet notation of the cube. 
    Indeed, for $K=\partial(P_4\times P_4)^\ast=(S^0)^{\join 4}$, the full subcomplexes $K_\omega$ with $\widetilde H^1(K_\omega;\Q)\neq 0$ are precisely those indexed by the sums of two opposite-pair vectors. 
    Hence the above condition is equivalent to c-symplecticity by Proposition~\ref{prop-four-manifold-csymp}.
    By Ishida's theorem~\cite{Ishida2011}., it is also equivalent to symplecticity.
    
    Assume that $(m_1,m_2)\neq(4,4)$.
    Let $K=\partial P^\ast$.
    Under the identification $\RZ_K\cong \Sigma_1\times\Sigma_2$ as in~\eqref{eq:RZK_productpolygon}, the canonical action splits by the two factors.
    Write $\Z_2^m=\Z_2^{m_1}\oplus\Z_2^{m_2}$.
    For $g\in G_\lambda$, write $g=(g^{(1)},g^{(2)})$ according to this decomposition, and let $\rho_i(g)\colon \Sigma_i\to\Sigma_i$ be the diffeomorphism induced by $g^{(i)}$ on the $i$th surface factor.
    
    The orientation character of $\rho_i(g)$ is $(-1)^{\langle \chi_i,g\rangle}$.
    Since $\chi_i\in\row(\lambda)$ and $g\in\ker\lambda$, we have $\langle\chi_i,g\rangle=0$.
    Thus every $\rho_i(g)$ is orientation-preserving.
    
    Choose positive area forms $\eta_1^0$ and $\eta_2^0$ on $\Sigma_1$ and $\Sigma_2$.
    Define
    $$
        \eta_i=\sum_{g\in G_\lambda}\rho_i(g)^\ast\eta_i^0 \qquad (i=1,2).
    $$
    Since every $\rho_i(g)$ is orientation-preserving, each $\eta_i$ is again a positive area form.
    By construction, $\eta_i$ is invariant under the induced $G_\lambda$-action on $\Sigma_i$.
    
    Let $\pi_i \colon \RZ_K\to \Sigma_i$ be the projection. 
    Then 
    $$
        \widetilde\Omega = \pi_1^\ast\eta_1+\pi_2^\ast \eta_2
    $$
    is closed and $G_\lambda$-invariant. 
    Moreover, $\widetilde\Omega^2 = 2\pi_1^\ast\eta_1\wedge \pi_2^\ast\eta_2$ is a positive volume form on $\Sigma_1 \times \Sigma_2$.
    Hence $\widetilde\Omega$ is a $G_\lambda$-invariant symplectic form on $\RZ_K$, and it descends to a symplectic form on $M$ by Proposition~\ref{prop:canonical-cover-obstruction}.
\end{proof}

We now turn to the converse direction.
From now on, we prove that every symplectic small cover over a product of two polygons is factor-compatible.
We introduce the geometric obstruction that controls symplectic small covers over products of two polygons.
\begin{proposition}\label{prop-product-factor-reversal}
    Let $P=P_{m_1}\times P_{m_2}$ be a product of two polygons with $m_1,m_2\geq 4$ and $(m_1,m_2)\ne (4,4)$, and put $K=\partial P^\ast$.
    Let $M=\RZ_K/G_\lambda$ be an oriented small cover.
    If some element of $G_\lambda$ reverses the orientations of both surface factors of~$\RZ_K$, then $M$ is not symplectic.
\end{proposition}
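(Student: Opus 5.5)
The plan is to derive a contradiction from Lemma~\ref{lem:canonical-class-invariant}, using that the symplectic canonical class of a product of surfaces is forced up to sign by Seiberg--Witten theory. Suppose $M$ is symplectic with form $\Omega$, and put $\widetilde\Omega=\pi^\ast\Omega$ on $\RZ_K\cong\Sigma_1\times\Sigma_2$ as in~\eqref{eq:RZK_productpolygon}. Since $m_1,m_2\ge 4$, both genera satisfy $\genus(m_i)\ge 1$. Since $(m_1,m_2)\neq(4,4)$, we have $m_2\ge 5$, and hence $\genus(m_2)\ge 5$. Let $\alpha_i=\pi_i^\ast[\eta_i]\in H^2(\Sigma_1\times\Sigma_2;\Z)$ be the pullback of the positive generator of $H^2(\Sigma_i;\Z)$. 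Then
$$
    K_{\mathrm{std}}=(2\genus(m_1)-2)\,\alpha_1+(2\genus(m_2)-2)\,\alpha_2
$$
is the canonical class of the product symplectic structure, and $K_{\mathrm{std}}\neq 0$ because $\genus(m_2)\ge 2$.

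The key input, and the main obstacle, is to identify $K_{\widetilde\Omega}$ for an \emph{arbitrary} symplectic form $\widetilde\Omega$ on $\Sigma_1\times\Sigma_2$, not merely a product one. I would proceed in two steps.
\begin{enumerate}
    \item Note that $b^+(\Sigma_1\times\Sigma_2)=1+2\genus(m_1)\genus(m_2)>1$. Hence Taubes' theorem~\cite{Taubes1994} shows that $\pm K_{\widetilde\Omega}$ is a Seiberg--Witten basic class, with respect to the orientation induced by $\widetilde\Omega^2$.
    \item Invoke the known computation of Seiberg--Witten basic classes of products of surfaces with one genus $\ge 2$ and the other $\ge 1$: the only basic classes are $\pm K_{\mathrm{std}}$. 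This uses that the manifold is a product/fibration with vanishing SW invariants away from the canonical classes, by the Morgan--Szab\'o--Taubes type product computation.
\end{enumerate}
Because $\widetilde\Omega^2$ might induce the opposite orientation from the product orientation, I would handle this by noting that the reversed orientation is realized by $\overline{\Sigma_1}\times\Sigma_2$. The same conclusion then holds with $\alpha_1$ replaced by $-\alpha_1$. In either case $K_{\widetilde\Omega}=\pm(\epsilon_1 c_1\alpha_1+c_2\alpha_2)$ with $\epsilon_1=\pm1$, $c_1=2\genus(m_1)-2\ge 0$, and $c_2=2\genus(m_2)-2>0$. Since $H^2(\Sigma_1\times\Sigma_2;\Z)$ is torsion-free, all these identities can be checked rationally.

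Now let $g\in G_\lambda$ reverse the orientations of both surface factors. Then $g=\rho_1(g)\times\rho_2(g)$ satisfies $g^\ast\alpha_1=-\alpha_1$ and $g^\ast\alpha_2=-\alpha_2$, because $\rho_i(g)$ acts by $-1$ on $H^2(\Sigma_i)$. Consequently
$$
    g^\ast K_{\widetilde\Omega}=-K_{\widetilde\Omega}.
$$
On the other hand, Lemma~\ref{lem:canonical-class-invariant} gives $g^\ast K_{\widetilde\Omega}=K_{\widetilde\Omega}$. Together these force $2K_{\widetilde\Omega}=0$, hence $K_{\widetilde\Omega}=0$ in the torsion-free group $H^2(\RZ_K;\Z)$. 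This contradicts the fact that the coefficient $c_2$ of $\alpha_2$ is nonzero. Therefore $M$ is not symplectic.

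The only delicate point is the citation in the second step, namely that no exotic basic classes exist on $\Sigma_1\times\Sigma_2$ beyond $\pm K_{\mathrm{std}}$. If this is unavailable in exactly the needed generality, a fallback would be the adjunction inequality applied to the embedded square-zero fibres $\{pt\}\times\Sigma_2$ and $\Sigma_1\times\{pt\}$, which are provided by Lemma~\ref{lem:coordinate-surface-dual}. For any basic class $\kappa$ this yields $|\kappa\cdot[\Sigma_i]|\le 2\genus(m_i)-2$. The fallback must then be combined with Taubes' equality $K_{\widetilde\Omega}\cdot[\widetilde\Omega]$ and his bounds to pin down the $\alpha_2$-coefficient of $K_{\widetilde\Omega}$ as nonzero. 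Nonvanishing of that one coefficient is all the argument needs.
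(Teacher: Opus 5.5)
Your skeleton matches the paper's: confine $K_{\widetilde\Omega}$ to $\Q\alpha_1\oplus\Q\alpha_2$, use $g$ to force $K_{\widetilde\Omega}=0$, and derive a contradiction. The gap is in step~2. Your claim that $\pm K_{\mathrm{std}}$ are the only Seiberg--Witten basic classes is correct when both genera are at least $2$, since the manifold is then a minimal surface of general type. It is false when $m_1=4$, which the statement explicitly covers. In that case $\RZ_K\cong T^2\times\Sigma_h$ with $h=\genus(m_2)\ge 5$. This is an elliptic surface of Kodaira dimension one whose Seiberg--Witten function is $(t-t^{-1})^{2h-2}$, where $t$ corresponds to the fibre class $F=\alpha_2$. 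Its basic classes are all $2kF$ with $|k|\le h-1$, and $0$ is among them. So the list of basic classes you cite does not exclude $K_{\widetilde\Omega}=0$, which is exactly where the involution $g$ pushes you.

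Your fallback does not close this. Adjunction for $\Sigma_1\times\{\mathrm{pt}\}$ and $\{\mathrm{pt}\}\times\Sigma_2$ only bounds the $\alpha_i$-coefficients from above. It gives no lower bound on the $\alpha_2$-coefficient. It also says nothing about the component of $K_{\widetilde\Omega}$ in $H^1(\Sigma_1)\otimes H^1(\Sigma_2)$. On that summand $g^\ast$ acts by $\rho_1(g)^\ast\otimes\rho_2(g)^\ast$, which need not be $-1$. So in the fallback you cannot even conclude $g^\ast K_{\widetilde\Omega}=-K_{\widetilde\Omega}$.

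There are two ways to repair this.
\begin{enumerate}
\item Use the sharper form of Taubes' theorem, namely that the canonical class has Seiberg--Witten invariant $\pm1$. On $T^2\times\Sigma_h$ the invariant of $2kF$ is $\pm\binom{2h-2}{h-1+k}$, which equals $\pm1$ only for $k=\pm(h-1)$. So $K_{\widetilde\Omega}=\pm K_{\mathrm{std}}$ after all, and your argument then goes through.
\item Follow the paper's route, which needs no computation of Seiberg--Witten invariants of products. Apply the adjunction inequality to the genus-one, square-zero product tori $\gamma_1\times\gamma_2$; these span $H_1(\Sigma_1;\Q)\otimes H_1(\Sigma_2;\Q)$, so the mixed component of $K_{\widetilde\Omega}$ vanishes uniformly in all cases. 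Then $g$ forces $K_{\widetilde\Omega}=0$. Finally, Bauer's theorem says a symplectic $4$-manifold with torsion $c_1$ has $b_2^+\le 3$, which contradicts $b_2^+(\RZ_K)=1+2\genus(m_1)\genus(m_2)>3$.
\end{enumerate}
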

\begin{proof}
    Suppose that $M$ is symplectic.
    Let $\Omega$ be a symplectic form on $M$.
    Let $\pi \colon \RZ_K\to M$ be the finite covering map, and let $\widetilde\Omega=\pi^\ast\Omega$ be the pullback symplectic form.
    Every element of $G_\lambda$ acts as a symplectomorphism of $\RZ_K\cong \Sigma_1\times \Sigma_2$ with respect to $\widetilde\Omega$.
    
    Choose an almost complex structure $J$ compatible with $\widetilde\Omega$, and let
    $$
    K_{\widetilde\Omega}:=-c_1(T\RZ_K,J)
    $$
    be the symplectic canonical class of $\RZ_K$.
    Since $(m_1,m_2)\neq (4,4)$, one of $\genus(m_1)$ and $\genus(m_2)$ is greater than $1$.
    Hence 
    \begin{equation} \label{eq:b2+}
        b_2^+(\RZ_K)=1+2\genus(m_1)\genus(m_2)>3.      
    \end{equation}

    Let $\gamma_1\subset \Sigma_1$ and $\gamma_2\subset \Sigma_2$ be embedded loops representing nonzero integral homology classes.
    Then
    $$
        T_{\gamma_1,\gamma_2}:=\gamma_1\times\gamma_2
    $$
    is an embedded torus in $\RZ_K$.
    Its homology class is non-torsion, and its self-intersection is zero.

    By Taubes' non-vanishing theorem and the standard Seiberg--Witten adjunction inequality for $b_2^+>1$~\cite{Taubes1994, Ozsvath-Szabo2000}, applied to the symplectic canonical class $K_{\widetilde\Omega}$, we have
    $$
        0=2\genus(T_{\gamma_1,\gamma_2})-2 \ge [T_{\gamma_1,\gamma_2}]^2+\left|K_{\widetilde\Omega}\cdot [T_{\gamma_1,\gamma_2}]\right|= \left|K_{\widetilde\Omega}\cdot [T_{\gamma_1,\gamma_2}]\right|.
    $$
    Therefore $K_{\widetilde\Omega}\cdot [T_{\gamma_1,\gamma_2}]=0$ for every product torus of this form.

    As $\gamma_1$ and $\gamma_2$ vary, these product tori span the mixed K\"unneth summand
    $$
        H_1(\Sigma_1;\Q)\otimes H_1(\Sigma_2;\Q) \subset H_2(\RZ_K;\Q).
    $$
    Hence, by K\"unneth duality, the component of $K_{\widetilde\Omega}$ in $H^1(\Sigma_1;\Q)\otimes H^1(\Sigma_2;\Q)$ vanishes.
    Thus the rational canonical class lies in $\Q u_1\oplus \Q u_2$, where $u_1$ and $u_2$ are the orientation classes of the two surface factors.
    
    Let $g \in G_\lambda$ be an element reversing the orientations of both surface factors.
    Then $g$ acts by multiplication by $-1$ on each of the two factor orientation classes.
    Therefore
    $$
        g^\ast K_{\widetilde\Omega}=-K_{\widetilde\Omega}.
    $$
    On the other hand, $g$ is a symplectomorphism of the pulled-back symplectic form, so it preserves the symplectic canonical class.
    Thus
    $$
        g^\ast K_{\widetilde\Omega}=K_{\widetilde\Omega}.
    $$
    It follows that $K_{\widetilde\Omega}=0$ in $H^2(\RZ_K;\Q)$.
    Since $\RZ_K$ is a product of surfaces, its integral cohomology is torsion-free. 
    Thus $K_{\widetilde\Omega}=0$ in $H^2(\RZ_K;\Z)$.    
    Equivalently, $c_1(T\RZ_K,J)=0$.
    In particular, $c_1(T\RZ_K,J)$ is torsion.
    By Bauer's theorem~\cite[Corollary~1.2]{Bauer2008}, applied to the closed symplectic four-manifold $(\RZ_K,\widetilde\Omega)$, we have
    $$
        b_2^+(\RZ_K)\le 3.
    $$
    This contradicts~\eqref{eq:b2+}.
    Hence $M$ is not symplectic.
\end{proof}

We now apply the obstruction to the failure of factor-compatibility.
For a product of two polygons, write $\chi_1,\chi_2\in \Z_2^m$ for the vectors supported on the facets of the two factors.
When at least one factor is not a square, factor-compatibility is equivalent to the condition that both $\chi_1$ and $\chi_2$ lie in $\row(\lambda)$.
When both factors are squares, this is the real Bott case.

\begin{lemma}\label{lem-failure-factor-compatible-reverses}
    Let $P=P_{m_1}\times P_{m_2}$ with $(m_1,m_2) \neq (4,4)$ and let $M=M(P,\lambda)$ be orientable.
    Assume that $M$ is not factor-compatible.
    Then there exists $g \in \ker\lambda$ which reverses the orientations of both surface factors of $\RZ_{\partial P^\ast}\cong\Sigma_1\times\Sigma_2$.
\end{lemma}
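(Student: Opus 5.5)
Since $(m_1,m_2)\neq(4,4)$, the statement "$M$ is not factor-compatible" means that $\chi_1\notin\row(\lambda)$ or $\chi_2\notin\row(\lambda)$. Orientability gives $\one_m=\chi_1+\chi_2\in\row(\lambda)$ by Corollary~\ref{cor:NN}. If one of $\chi_1,\chi_2$ lay in $\row(\lambda)$, the other would too, since $\row(\lambda)$ is a subspace. Hence in fact both $\chi_1\notin\row(\lambda)$ and $\chi_2\notin\row(\lambda)$.

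Next I apply Lemma~\ref{lem:row-kernel-duality} to $u=\chi_1$. This gives $g\in G_\lambda=\ker\lambda$ with $\langle\chi_1,g\rangle=1$. Since $\one_m\in\row(\lambda)=(\ker\lambda)^\perp$, we have $\langle\one_m,g\rangle=0$. Therefore
$$
    \langle\chi_2,g\rangle=\langle\one_m,g\rangle+\langle\chi_1,g\rangle=1.
$$

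It remains to translate these pairings into orientation behaviour. As in the proof of Proposition~\ref{prop:factor-compatible-symplectic}, write $g=(g^{(1)},g^{(2)})$ with respect to $\Z_2^m=\Z_2^{m_1}\oplus\Z_2^{m_2}$. The action of $g$ on $\RZ_K\cong\Sigma_1\times\Sigma_2$ is the product of the actions $\rho_i(g)$ of $g^{(i)}$ on $\Sigma_i=\RZ_{\partial P_{m_i}^\ast}$. The orientation character of $\rho_i(g)$ is $(-1)^{\langle\chi_i,g\rangle}$. To justify this, note that each generator $e_j$ of $\Z_2^{m_i}$ acts on $\RZ_{\partial P_{m_i}^\ast}\subset (D^1)^{m_i}$ as a reflection $x_j\mapsto -x_j$. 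In the top Hochster summand of $H^2(\Sigma_i;\Q)$, which is indexed by the full vertex set, this reflection acts by $-1$; equivalently, it is the reflection of $(D^1)^{m_i}$ restricted to a hypersurface with trivialized normal bundle. So each coordinate reflection reverses orientation, and a product of reflections has sign $(-1)^{|\supp g^{(i)}|}=(-1)^{\langle\chi_i,g\rangle}$. Both pairings equal $1$, so $g$ reverses the orientations of both $\Sigma_1$ and $\Sigma_2$.

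The only step I expect to require care is this last justification that each coordinate reflection is orientation-reversing on the surface factor. I would verify it through the Hochster decomposition of Theorem~\ref{thm:CP} with $\lambda=\id$. The fundamental class lies in the summand $\widetilde H^1(K_{[m_i]})$. The coordinate reflection $e_j$ acts on the summand of weight $\omega$ by $(-1)^{\langle\omega,e_j\rangle}$, which is $-1$ here because $j\in[m_i]$.
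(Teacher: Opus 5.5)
Your proof is correct and follows the paper's argument. Orientability gives $\one_m\in\row(\lambda)$, and row--kernel duality produces $g\in\ker\lambda$ with $\langle\chi_1,g\rangle=1$, which forces $\langle\chi_2,g\rangle=1$; the orientation character $(-1)^{\langle\chi_i,g\rangle}$ then finishes the argument.

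Your extra justification of that orientation-character formula, reading the Hochster weight $\omega$ as the $(-1)^{\langle\omega,\cdot\rangle}$-eigenspace, is a welcome addition, since the paper simply asserts the formula. One correction: $\RZ_{\partial P_{m_i}^\ast}$ has codimension $m_i-2$ in $\R^{m_i}$, not $1$, so ``hypersurface'' should read ``submanifold whose normal framing is preserved by the reflection.''
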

\begin{proof}
    Since $M$ is orientable, Corollary~\ref{cor:NN} gives $\one_{m}=\chi_1+\chi_2\in\row(\lambda)$.
    By the assumption, one of $\chi_1,\chi_2$ does not lie in $\row(\lambda)$.
    
    Without loss of generality, assume that $\chi_1\notin\row(\lambda)$.
    By Lemma~\ref{lem:row-kernel-duality}, there exists $g \in\ker\lambda$ such that $\langle \chi_1, g \rangle=1$.
    Since
    $$
        0 = \langle \one_{m},g \rangle = \langle \chi_1+\chi_2,g\rangle = 1+\langle \chi_2,g\rangle,
    $$
    we have $\langle \chi_2,g\rangle=1$.
    The orientation character of the action of $g$ on the $i$th surface factor is $(-1)^{\langle\chi_i,g\rangle}$.
    Therefore $\langle\chi_i,g\rangle=1$ means that $g$ reverses the orientation of the $i$th surface factor.
\end{proof}

Combining the previous results gives the four-dimensional classification for products of two polygons.

\begin{theorem}\label{thm-two-polygons-factor-compatible}
    Let $P$ be a product of two polygons.
    A small cover over $P$ is symplectic if and only if it is factor-compatible.
\end{theorem}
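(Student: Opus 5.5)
The ``if'' direction is exactly Proposition~\ref{prop:factor-compatible-symplectic}, so the work is in the converse. Let $P=P_{m_1}\times P_{m_2}$ with $m_1\le m_2$, and let $M=M(P,\lambda)$ be a symplectic small cover. We aim to show that $M$ is factor-compatible, splitting into cases according to $(m_1,m_2)$.

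First, suppose $m_1=3$. Since $M$ is symplectic, it is c-symplectic. Proposition~\ref{prop:triangle-factor-not-csymp} says no small cover over $P_3\times P_{m-3}$ is c-symplectic, so this case cannot occur.

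Second, suppose $(m_1,m_2)=(4,4)$. Then $P$ is the $4$-cube and $M$ is a real Bott manifold. By Ishida's theorem~\cite{Ishida2011}, symplecticity is equivalent to c-symplecticity. As explained in the proof of Proposition~\ref{prop:factor-compatible-symplectic}, Proposition~\ref{prop-four-manifold-csymp} shows c-symplecticity is equivalent to the opposite-pair condition~(1) of factor-compatibility. The one point to check is that c-symplecticity also includes orientability, $\one_m\in\row(\lambda)$. Note that $\one_m$ is the sum of all four $\delta$'s. So if one pair-sum lies in $\row(\lambda)$, the complementary pair-sum lies in $\row(\lambda)$ as well. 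Hence a single nonvanishing $\widetilde H^1(K_\omega;\Q)$ with $\omega\in\row(\lambda)$, together with orientability, yields the required pairing.

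Finally, suppose $m_1,m_2\ge 4$ and $(m_1,m_2)\ne(4,4)$. A symplectic manifold is orientable, so $M$ is orientable. Assume for contradiction that $M$ is not factor-compatible. Lemma~\ref{lem-failure-factor-compatible-reverses} then gives $g\in\ker\lambda=G_\lambda$ reversing the orientations of both surface factors of $\RZ_K\cong\Sigma_1\times\Sigma_2$. Proposition~\ref{prop-product-factor-reversal} then says $M$ is not symplectic, which is a contradiction. Hence $\chi_1,\chi_2\in\row(\lambda)$, which is condition~(2).

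All the substantive input has already been established, so the proof is essentially bookkeeping. The step needing the most care is the $(4,4)$ case: one must confirm that Ishida's c-symplectic criterion matches the pairing formulation of condition~(1) exactly, including the orientability requirement. A secondary point is that the case split is exhaustive. It is, since $m_1\le m_2$ forces either $m_1=3$ or both $m_i\ge4$.
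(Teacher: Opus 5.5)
Your proposal is correct and follows the paper's proof essentially step for step. Triangle factors are excluded via Proposition~\ref{prop:triangle-factor-not-csymp}, the $(4,4)$ case is reduced to Ishida's theorem, and the remaining cases combine Lemma~\ref{lem-failure-factor-compatible-reverses} with Proposition~\ref{prop-product-factor-reversal}. The only differences are cosmetic: you get the ``if'' direction uniformly from Proposition~\ref{prop:factor-compatible-symplectic}, whereas the paper handles the triangle case via Lemma~\ref{lem:odd-factor-not-compatible}, and you spell out the orientability and complementary-pair check in the cube case that the paper leaves implicit.
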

\begin{proof}
    Let $M$ be a small cover over $P=P_{m_1} \times P_{m_2}$.
    If one of $m_1$, $m_2$ is equal to $3$, then Proposition~\ref{prop:triangle-factor-not-csymp} shows that no small cover over $P$ is c-symplectic, hence none is symplectic. 
    On the other hand, Lemma~\ref{lem:odd-factor-not-compatible} shows that no such small cover is factor-compatible. 
    Thus the equivalence holds in this case. 
    We may therefore assume $m_1,m_2 \geq 4$.
    
    If the small cover is factor-compatible, then it is symplectic by Proposition~\ref{prop:factor-compatible-symplectic}.
    
    Conversely, assume that $M$ is symplectic.
    If $(m_1,m_2)=(4,4)$, then $M$ is a real Bott manifold.
    By~\cite{Ishida2011}, c-symplecticity and symplecticity are equivalent for real Bott manifolds.
    Since factor-compatibility is defined as c-symplecticity in this case, the claim follows.
    
    Now assume $(m_1,m_2)\ne(4,4)$.
    If $M$ is not factor-compatible, Lemma~\ref{lem-failure-factor-compatible-reverses} would give an element of $\ker\lambda$ reversing both surface factors.
    Proposition~\ref{prop-product-factor-reversal} would then rule out symplectic structures on $M$, a contradiction.
    Hence $M$ is factor-compatible.
\end{proof}

\begin{example}\label{ex-csymp-not-factor-compatible-P5P4}
    Let $P=P_5\times P_4$.
    Consider the characteristic matrix
    $$
        \lambda=
            \begin{pNiceArray}{ccccc|cccc}[first-row,first-col]
                &
                \scriptstyle F_{1,1}&\scriptstyle F_{1,2}&\scriptstyle F_{1,3}
                &\scriptstyle F_{1,4}&\scriptstyle F_{1,5}
                &
                \scriptstyle F_{2,1}&\scriptstyle F_{2,2}&\scriptstyle F_{2,3}&\scriptstyle F_{2,4}
                \\
                \lambda_1&
                1&0&1&0&1&
                0&0&0&0
                \\
                \lambda_2&
                0&1&0&1&1&
                0&0&0&0
                \\
                \lambda_3&
                0&0&0&0&1&
                1&0&1&0
                \\
                \lambda_4&
                0&0&0&0&0&
                0&1&0&1
            \end{pNiceArray}.
    $$
    It is straightforward to check the characteristic condition.
    
    Since $\lambda_1 + \cdots + \lambda_4 =\one_9 \in \row(\lambda)$, the corresponding small cover $M=M(P,\lambda)$ is orientable by Corollary~\ref{cor:NN}.
    Moreover, the row space contains 
    $$
        \omega = F_{1,1}+F_{1,3}+F_{2,1}+F_{2,3} = \lambda_1 + \lambda_3.
    $$
    The induced full subcomplex is $K_\omega \cong S^0\ast S^0 \cong S^1$, and hence $\widetilde H^1(K_\omega;\Q)\cong \Q$.
    Therefore, by Proposition~\ref{prop-four-manifold-csymp}, $M$ is c-symplectic.
    
    On the other hand, since the first factor has five sides, Lemma~\ref{lem:odd-factor-not-compatible} gives $\chi_1\notin \row(\lambda)$.
    Hence $M$ is not factor-compatible.
    Therefore Theorem~\ref{thm-two-polygons-factor-compatible} implies that $M$ is not symplectic.
    
    This example shows that the equivalence between c-symplecticity and symplecticity for real Bott manifolds~\cite{Ishida2011} does not extend to small covers over products of two polygons. 
\end{example}

\section{Smooth classification of factor-compatible small covers}\label{sec:two-stage-rigidity}

In this section, we discuss the smooth type of a symplectic small covers $M$ over a product $P = P_{m_1}\times P_{m_2}$ of two polygons.
First, we shall classify $M$ up to D--J equivalence.

For an even integer $m$, set
$$
    \one_m=(1,\ldots,1) \quad \text{ and } \quad \varepsilon_m=(1,0,1,0,\ldots,1,0)\in \Z_2^{m}.
$$

\begin{lemma}\label{lem:two-stage-normal-form}
    Every two-stage factor-compatible small cover is, after permuting the two polygon factors, and up to
    D--J equivalence, represented by a characteristic matrix of the form
    \begin{equation}\label{eq:two-stage-normal-form}
        \lambda_\beta=
        \begin{pmatrix}
            \one_{m_1} & 0\\
            \varepsilon_{m_1} & 0\\
            0 & \one_{m_2}\\
            \beta & \varepsilon_{m_2}
        \end{pmatrix},
    \end{equation}
    where $\beta\in \Z_2^{m_1}$.
\end{lemma}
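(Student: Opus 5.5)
Throughout I take ``two-stage factor-compatible'' to mean $P=P_{m_1}\times P_{m_2}$ with $\chi_1,\chi_2\in\row(\lambda)$. By Lemma~\ref{lem:odd-factor-not-compatible}, both $m_1$ and $m_2$ are then even. The plan is to work with columns rather than rows. Write $x_1,\dots,x_{m_1}$ for the columns of the first block and $y_1,\dots,y_{m_2}$ for those of the second, indices taken cyclically. Let $\varphi_1,\varphi_2\colon\Z_2^4\to\Z_2$ be the functionals realizing $\chi_1,\chi_2$. Thus $\varphi_1$ equals $1$ on every $x_j$ and $0$ on every $y_k$, and $\varphi_2$ the other way round. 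Put $W_k=\langle y_k,y_{k+1}\rangle$ and $V_j=\langle x_j,x_{j+1}\rangle$. The characteristic condition says exactly that $V_j\oplus W_k=\Z_2^4$ for all $j,k$.

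\emph{Step 1 (alternation).} Run the argument of Lemma~\ref{lem:odd-factor-not-compatible} for every $W_k$. Modulo $W_k$, the $x_j$ lie on the two-element affine line $\{\varphi_1=1\}$ and must alternate. Hence every $d=x_j+x_{j+2}$ lies in $D_1:=\bigcap_k W_k$. Symmetrically, every $y_k+y_{k+2}$ lies in $D_2:=\bigcap_j V_j$.

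\emph{Step 2 (one block is two-valued).} This is the key step, and I expect it to be the main obstacle. Suppose some $d=x_j+x_{j+2}$ is nonzero. Then $d$ lies in every $W_k$, and $\varphi_1(d)=\varphi_2(d)=0$. The only nonzero element of $W_k$ killed by $\varphi_2$ is $y_k+y_{k+1}$, so $d=y_k+y_{k+1}$ for every $k$. Consequently all consecutive sums in the second block coincide, which forces $y_k=y_{k+2}$ for all $k$. So either the first block is two-valued or the second is. After permuting the factors, I may assume $y_k$ equals $y_1$ for $k$ odd and $y_2$ for $k$ even. In that case every $x_j+x_{j+2}$ lies in $\{0,\,y_1+y_2\}$. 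This permutation of factors is the one allowed by the statement.

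\emph{Step 3 (normal form).} Change basis of $\Z_2^4$ to the vertex basis $x_1,x_2,y_1,y_2$; this is a row operation, so the D--J class is unchanged. By Step 2, each $x_j$ equals $x_1$ or $x_2$ according to the parity of $j$, plus $\beta_j(y_1+y_2)$ for some $\beta_j\in\Z_2$. The second block then has columns $e_3,e_4,e_3,e_4,\dots$. The rows of the resulting matrix are $(\varepsilon_{m_1}\mid 0)$, $(\one+\varepsilon_{m_1}\mid 0)$, $(\beta\mid\varepsilon_{m_2})$ and $(\beta\mid\one+\varepsilon_{m_2})$. Now replace row~$2$ by row~$1$ plus row~$2$, and row~$4$ by row~$3$ plus row~$4$, and then reorder the rows. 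This produces exactly the form~\eqref{eq:two-stage-normal-form}, with $\beta=(\beta_j)$; the normalization $\beta_1=\beta_2=0$ is harmless, since it is a special case of an arbitrary $\beta$.

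It remains to check that nothing else is used. The conditions $\chi_1,\chi_2\in\row(\lambda)$ enter only through $\varphi_1$ and $\varphi_2$. The characteristic condition enters through the alternation in Step 1 and the direct sums $V_j\oplus W_k=\Z_2^4$. Finally, only relabeling of facets within each factor by cyclic shift may be needed to make $x_1,x_2,y_1,y_2$ the chosen vertex basis.
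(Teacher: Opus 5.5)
Your proof is correct, but it takes a different route from the paper's.

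\textbf{The paper's route.} It works with rows. It makes $\chi_1,\chi_2$ the first and third rows and asserts that the other two rows can be taken as $(\varepsilon_{m_1}\mid\beta_{m_2})$ and $(\beta_{m_1}\mid\varepsilon_{m_2})$. It then reduces the $4\times4$ minor at each vertex $F_{1,i}\cap F_{1,i+1}\cap F_{2,j}\cap F_{2,j+1}$ to $1+A_iB_j$, where $A_i,B_j$ are consecutive differences of $\beta_{m_1},\beta_{m_2}$. Hence one of $\beta_{m_1},\beta_{m_2}$ is constant, and it can (implicitly) be cleared by $\chi_1$ or $\chi_2$.

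\textbf{Your route.} You work with columns. Alternation modulo every $W_k$, together with $W_k\cap\ker\varphi_2=\{0,y_k+y_{k+1}\}$, forces one block to be two-valued. Passing to the vertex basis $x_1,x_2,y_1,y_2$ then gives the normal form by two explicit row additions.

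\textbf{Why these are the same fact.} All sums $x_i+x_{i+1}$ and $y_j+y_{j+1}$ are nonzero vectors of the plane $\ker\varphi_1\cap\ker\varphi_2$. The characteristic condition says exactly $x_i+x_{i+1}\neq y_j+y_{j+1}$; in the paper's coordinates this is $1+A_iB_j\neq0$. Since the plane has only three nonzero vectors, one factor uses just one of them.

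\textbf{What each buys.} Your version is complete where the paper's is not. The paper states its initial row form without proof; the justification is precisely your Step~1, applied to the functionals dual to $x_1$ and $y_1$ in the vertex basis. The paper also leaves the final clearing step to the reader. Your argument derives everything from the characteristic condition. Your normalization $\beta_1=\beta_2=0$ also picks a canonical representative of $[\beta]\in\Z_2^{m_1}/\langle\one_{m_1},\varepsilon_{m_1}\rangle$, as used in Corollary~\ref{cor:number-two-stage-symplectic}. The paper's determinant, in turn, is a one-line check once its row form is granted.

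\textbf{Two minor points.}
\begin{enumerate}
\item No cyclic relabeling is ever needed, since $F_{1,1}\cap F_{1,2}\cap F_{2,1}\cap F_{2,2}$ is already a vertex. Relabeling facets would not be a D--J equivalence anyway.
\item Like the paper's proof, you read factor-compatibility as $\chi_1,\chi_2\in\row(\lambda)$. For $(m_1,m_2)=(4,4)$ this is stronger than the paper's definition, so in that case one must first regroup the four interval directions of the cube into two squares.
\end{enumerate}
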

\begin{proof}
    By factor-compatibility, after elementary row operations, we may assume that the first and third rows are the factor weights $\chi_1$ and $\chi_2$, respectively. 
    With respect to this choice of row basis, the second and fourth rows can be written as $\begin{pmatrix} \varepsilon_{m_1} & \beta_{m_2}\end{pmatrix}$ and $\begin{pmatrix}  \beta_{m_1} &\varepsilon_{m_2} \end{pmatrix}$, respectively.
    Hence,
    $$
        \lambda=
        \begin{pmatrix}
            \one_{m_1} & 0\\
            \varepsilon_{m_1} & \beta_{m_2}\\
            0 & \one_{m_2}\\
            \beta_{m_1} & \varepsilon_{m_2}
        \end{pmatrix}.
    $$
    
    Suppose both $\beta_{m_2}$ and $\beta_{m_1}$ are nonzero. 
    
    For adjacent pairs in $P_{m_1}$ and $P_{m_2}$, set
    $$
        A_i=\beta_{m_1}(F_{1,i})+\beta_{m_1}(F_{1,i+1}) \qquad B_j=\beta_{m_2}(F_{2,j})+\beta_{m_2}(F_{2,j+1}).
    $$
    At the vertex determined by $F_{1,i}\cap F_{1,i+1}\cap F_{2,j}\cap F_{2,j+1}$, the four corresponding columns must form a basis of $\Z_2^4$. 
    After adding the first of the two $P_{m_1}$-columns to the second, and the first of the two $P_{m_2}$-columns to the second, the determinant becomes
    $$
        \det
        \begin{pmatrix}
            1 & B_j\\
            A_i & 1
        \end{pmatrix}
        = 1+A_iB_j.
    $$
    Hence the characteristic condition forces $A_iB_j=0$ for all $i,j$.
    Therefore either $A_i=0$ for all $i$, or $B_j=0$ for all $j$.
\end{proof}

\begin{corollary}\label{cor:number-two-stage-symplectic}
    Let $m_1$ and $m_2$ be even integers with $m_1,m_2\ge 4$, and put $P=P_{m_1}\times P_{m_2}$.
    Up to D--J equivalence, the number of symplectic small covers over $P$ is
    $$
        2^{m_1-2}+2^{m_2-2}-1.
    $$
\end{corollary}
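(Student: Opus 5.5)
By Theorem~\ref{thm-two-polygons-factor-compatible}, symplectic small covers over $P$ are exactly the factor-compatible ones. Small covers over $P$ up to D--J equivalence correspond to characteristic matrices modulo left multiplication by $GL_4(\Z_2)$, that is, to row spaces $\row(\lambda)\subset\Z_2^m$. So the plan is to count the row spaces of factor-compatible characteristic matrices, using the normal form of Lemma~\ref{lem:two-stage-normal-form} and then removing redundancies.

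\emph{Step 1: the two blocks.} Put $\chi_1$ and $\chi_2$ as the first and third rows. In the second row, the $P_{m_1}$-block must take different values on adjacent facets of $P_{m_1}$; this is the argument of Lemma~\ref{lem:odd-factor-not-compatible}. Hence that block alternates and equals $\varepsilon_{m_1}$ or $\varepsilon_{m_1}+\one_{m_1}$, and adding the first row normalizes it to $\varepsilon_{m_1}$. The same holds for the fourth row and $\varepsilon_{m_2}$. The proof of Lemma~\ref{lem:two-stage-normal-form} then shows that one off-diagonal block, say $\beta_{m_1}$, has all differences $A_i=0$, so it is constant. Adding the first row (if needed) makes it zero, while the other block $\beta_{m_2}$ stays arbitrary. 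Conversely, I will check that for any $\beta$ the matrix $\lambda_\beta$, and its mirror with the roles of the factors swapped, satisfies the characteristic condition. Indeed, the determinant at each vertex is $1+A_iB_j=1$ once one of the families of differences vanishes.

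\emph{Step 2: counting.} Within the family $\lambda_\beta$, adding row~$1$ to row~$4$ replaces $\beta$ by $\beta+\one_{m_1}$. Adding row~$2$ replaces $\beta$ by $\beta+\varepsilon_{m_1}$, and changes row~$4$ only by adding zero to its second block. So $\row(\lambda_\beta)$ depends only on the class of $\beta$ in $\Z_2^{m_1}/\langle\one_{m_1},\varepsilon_{m_1}\rangle$. Conversely, I will show that $\row(\lambda_\beta)=\row(\lambda_{\beta'})$ forces $\beta+\beta'\in\langle\one_{m_1},\varepsilon_{m_1}\rangle$. This follows by intersecting the row space with the subspace of vectors whose $P_{m_2}$-block is $\varepsilon_{m_2}$. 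That gives $2^{m_1-2}$ classes, and symmetrically $2^{m_2-2}$ classes for the mirrored family. A row space lying in both families contains a vector with first block $\beta$ and second block $\varepsilon_{m_2}$, and also one with first block $\varepsilon_{m_1}$ and second block $\beta'$. Comparing these forces both $\beta$ and $\beta'$ to be trivial, so the overlap is exactly the product class $\beta=0$. This yields $2^{m_1-2}+2^{m_2-2}-1$.

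\emph{Main obstacle.} The delicate case is $(m_1,m_2)=(4,4)$. There factor-compatibility allows the other pairings of the opposite-pair vectors $\delta_{i,j}$, so $\chi_1,\chi_2$ need not lie in $\row(\lambda)$, and Lemma~\ref{lem:two-stage-normal-form} does not obviously apply. Here I would first try to enumerate directly. Orientability gives $\one_8\in\row(\lambda)$, so factor-compatibility amounts to requiring one of $\chi_1$, $\delta_{1,1}+\delta_{2,1}$, $\delta_{1,1}+\delta_{2,2}$ to lie in $\row(\lambda)$. Up to relabeling facets within each square, the cross pairings look just like the $\chi$ pairing. I would then check by hand, or against Ishida's count of c-symplectic $4$-dimensional real Bott manifolds, that the total number of D--J classes is $7=4+4-1$. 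If the cross pairings turn out to contribute additional classes, the corollary would have to be restricted to $(m_1,m_2)\ne(4,4)$.
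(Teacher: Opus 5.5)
\paragraph{The case $(m_1,m_2)=(4,4)$.} This is the genuine gap. You leave it open, and it cannot be closed the way you hope: the cross pairings do contribute, and the count there is $19$, not $7$.

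For instance, take the matrix over $P_4\times P_4$ with rows $(1,0,1,1\mid 0,0,0,0)$, $(0,1,0,1\mid 0,0,0,0)$, $(0,0,0,1\mid 1,0,1,0)$, $(0,0,0,0\mid 0,1,0,1)$. Columns are ordered $F_{1,1},\dots,F_{1,4},F_{2,1},\dots,F_{2,4}$; this is the square analogue of Example~\ref{ex-csymp-not-factor-compatible-P5P4}.
\begin{itemize}
\item It satisfies the characteristic condition.
\item Its row space contains $\one_8$, $\delta_{1,1}+\delta_{2,1}$ (rows $1+3$) and $\delta_{1,2}+\delta_{2,2}$ (rows $2+4$). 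So it is factor-compatible, hence symplectic.
\item But $\chi_1\notin\row(\lambda)$, whereas every $\lambda_\beta$ and its mirror has $\chi_1$ in its row space. So it is a new D--J class.
\end{itemize}

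In general, permuting the four directions of the cube is a symmetry of the labeled characteristic matrices over $P_4\times P_4$. It shows that each of the three pairings of the $\delta_{i,j}$ is realized by exactly $7$ row spaces; your Steps 1--2 count the $\chi$-pairing. A row space compatible with two different pairings contains all six sums of two distinct $\delta_{i,j}$. The vertex condition then forces opposite facets of each square to have the same characteristic vector, which gives the torus $T^4$. Hence there are $3\cdot 7-3+1=19$ D--J classes of symplectic small covers over $P_4\times P_4$.

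Your fallback is therefore the right conclusion: the corollary as stated is false for $(m_1,m_2)=(4,4)$ and holds only for $(m_1,m_2)\neq(4,4)$. The paper's own proof has exactly this hole. Lemma~\ref{lem:two-stage-normal-form} begins by putting $\chi_1,\chi_2$ into $\row(\lambda)$, which factor-compatibility does not provide when both factors are squares.

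\paragraph{The case $(m_1,m_2)\neq(4,4)$.} Your argument is correct and is essentially the paper's. You use the normal form, then count classes of $\beta$ modulo $\langle\one_{m_i},\varepsilon_{m_i}\rangle$ in each direction, with the product class shared. You also make explicit the checks the paper omits: that $\row(\lambda_\beta)$ determines the class of $\beta$, that the two families meet only in the product class, and that every $\lambda_\beta$ is characteristic.

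One minor repair is needed in Step~1. For an arbitrary completion of $\chi_1,\chi_2$ to a row basis, row~$2$ need not have an alternating $P_{m_1}$-block. The argument of Lemma~\ref{lem:odd-factor-not-compatible} only shows that $\row(\lambda)$ contains such a row. So rows $2$ and $4$ must be chosen suitably rather than taken arbitrarily.
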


\begin{proof}
    A small cover over $P$ is symplectic if and only if it is factor-compatible by Theorem~\ref{thm-two-polygons-factor-compatible}.
    By Lemma~\ref{lem:two-stage-normal-form}, the only off-diagonal datum is a class in $[\beta]\in \Z_2^{m_i}/\langle \one_{m_i},\varepsilon_{m_i}\rangle$, so each direction contributes $2^{m_i-2}-1$ nonzero classes, while the zero class gives the unique product case.
\end{proof}

Since each diagonal block is an orientable characteristic matrix over a polygon, the diagonal small covers over $P_{m_1}$ and $P_{m_2}$ are the orientable surfaces 
$$
    \widetilde{\Sigma}_1=\Sigma_{\genus_{sc}(m_1)} \quad \text{ and } \quad \widetilde{\Sigma}_2=\Sigma_{\genus_{sc}(m_2)},
$$
respectively, where $\genus_{sc}(m)=\frac{m-2}{2}$.

On the other hand, Since $H^1(\widetilde{\Sigma}_1;\Z_2)\cong\Z_2^{m_1}/\langle \one_{m_1},\varepsilon_{m_1}\rangle$, the class $[\beta]$ may be regarded as an element $\alpha\in H^1(\widetilde{\Sigma}_1;\Z_2)$, where
$$
    \alpha=[\beta]\in H^1(\widetilde{\Sigma}_1;\Z_2).
$$
We denote by $M_\alpha$ the small cover represented by \eqref{eq:two-stage-normal-form}.

\begin{proposition}\label{prop:two-stage-diffeomorphism-classification}
    For fixed $m_1$ and $m_2$, the diffeomorphism type of $M_\alpha$ depends only on whether $\alpha$ is zero or nonzero. 
    More precisely,
    $$
        M_\alpha\cong  \widetilde{\Sigma}_1\times \widetilde{\Sigma}_2 \quad\text{if } \alpha=0,
    $$
    whereas all small covers $M_\alpha$ with $\alpha\ne0$ are mutually diffeomorphic.
\end{proposition}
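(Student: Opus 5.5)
We propose to realize $M_\alpha$ as the total space of a fiber bundle $\widetilde\Sigma_2 \to M_\alpha \to \widetilde\Sigma_1$ with structure group $\Z_2$, and then to classify such bundles. The first two rows of $\lambda_\beta$ involve only the first factor. Consider the projection $\Z_2^4\to\Z_2^2$ onto the first two coordinates. It maps $M_\alpha$ onto the small cover $M(P_{m_1},\binom{\one}{\varepsilon})=\widetilde\Sigma_1$, and the map is compatible with the projection $P\to P_{m_1}$. Concretely, write $M_\alpha=(\RZ_{\partial P_{m_1}^\ast}\times \RZ_{\partial P_{m_2}^\ast})/G_{\lambda_\beta}$.

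The block-triangular form shows that $M_\alpha$ is the quotient of $\widetilde\Sigma_1'\times \widetilde\Sigma_2$ by a $\Z_2$. Here $\widetilde\Sigma_1'\to\widetilde\Sigma_1$ is the double cover classified by $\alpha$, and $\Z_2$ acts diagonally by the deck involution on the first factor and by an involution $\tau$ on $\widetilde\Sigma_2$. The involution $\tau$ is the action on $\widetilde\Sigma_2=M(P_{m_2},\binom{\one}{\varepsilon})$ of the element of the residual torus corresponding to the extra row direction. Equivalently, one checks directly from the definition $(\Z_2^4\times P)/\sim$ that
$$M_\alpha\cong \widetilde\Sigma_1'\times_{\Z_2}\widetilde\Sigma_2,$$
a flat $\widetilde\Sigma_2$-bundle over $\widetilde\Sigma_1$ with monodromy $\pi_1(\widetilde\Sigma_1)\to H_1(\widetilde\Sigma_1;\Z_2)\xrightarrow{\alpha}\Z_2=\langle\tau\rangle$. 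When $\alpha=0$ the bundle is trivial, which gives $M_0\cong\widetilde\Sigma_1\times\widetilde\Sigma_2$. We will verify the monodromy identification by tracking the fourth-row entries $\beta(F_{1,i})$: crossing the facet $F_{1,i}$ in the base applies $\tau^{\beta(F_{1,i})}$ in the fiber.

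For $\alpha\ne0$, the key observation is that the mapping class group of the orientable surface $\widetilde\Sigma_1$ acts transitively on the nonzero elements of $H^1(\widetilde\Sigma_1;\Z_2)$. This holds because $\mathrm{Sp}(2g,\Z)\to\mathrm{Sp}(2g,\Z_2)$ is surjective and $\mathrm{Sp}(2g,\Z_2)$ is transitive on nonzero vectors. Given nonzero $\alpha,\alpha'$, choose a diffeomorphism $f\colon\widetilde\Sigma_1\to\widetilde\Sigma_1$ with $f^\ast\alpha'=\alpha$. Then $f$ lifts to a $\Z_2$-equivariant diffeomorphism of the double covers $\widetilde\Sigma_1^{\alpha}\to\widetilde\Sigma_1^{\alpha'}$. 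Taking the product with the identity on $\widetilde\Sigma_2$ induces a diffeomorphism $M_\alpha\cong M_{\alpha'}$. Note that $\tau$ is the same involution for all $\alpha$, since it depends only on $m_2$.

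The main obstacle is the first step: rigorously identifying $M_\alpha$ with the associated bundle and pinning down $\tau$ as a fixed involution independent of $\beta$. We would handle this by an explicit gluing. We build $M_\alpha$ from copies of $P_{m_1}\times\widetilde\Sigma_2$ indexed by $\Z_2^2$, glued along facets $F_{1,i}\times\widetilde\Sigma_2$ via $\tau^{\beta(F_{1,i})}$. This matches the standard construction of $\widetilde\Sigma_1$ from copies of $P_{m_1}$, twisted by the cocycle $\beta$. Changing $\beta$ by $\one_{m_1}$ or $\varepsilon_{m_1}$ is a coboundary, which is consistent with $\alpha$ being well defined.
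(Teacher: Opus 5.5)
Your proposal is correct and follows essentially the paper's own route. Like the paper, you identify $M_\alpha$ with the associated bundle $L_\alpha\times_{\Z_2}\widetilde\Sigma_2$, with a fixed fiber involution coming from the $\varepsilon_{m_2}$-row. You then use transitivity of the mapping class group of $\widetilde\Sigma_1$ on nonzero classes in $H^1(\widetilde\Sigma_1;\Z_2)$ to pull back the double cover.

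There are two minor differences. You justify transitivity through the surjections $\mathrm{Mod}(\widetilde\Sigma_1)\to\mathrm{Sp}(2g,\Z)\to\mathrm{Sp}(2g,\Z_2)$, whereas the paper uses nonseparating simple closed curves. You also sketch an explicit gluing check of the bundle identification, which the paper only asserts.
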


\begin{proof}
    If $\alpha=0$, then $\lambda_\beta$ is D--J equivalent to the block diagonal matrix.
    Thus $M_\alpha\cong \widetilde{\Sigma}_1\times \widetilde{\Sigma}_2$.
    
    Now suppose that $\alpha\ne0$.
    The projection $P_{m_1}\times P_{m_2}\to P_{m_1}$ induces an equivariant bundle
    $$
        \widetilde{\Sigma}_2\longrightarrow M_\alpha\longrightarrow \widetilde{\Sigma}_1.
    $$
    The off-diagonal row $\beta$ records the monodromy of this bundle. 
    More precisely, $M_\alpha$ is the associated bundle
    $$
        M_\alpha\cong L_\alpha\times_{\Z_2}\widetilde{\Sigma}_2,
    $$
    where $L_\alpha\to \widetilde{\Sigma}_1$ is the principal $\Z_2$-bundle classified by $\alpha\in H^1(\widetilde{\Sigma}_1;\Z_2)$, and the nontrivial element of $\Z_2$ acts on the fiber $\widetilde{\Sigma}_2$ by the fixed involution induced by the second generator of the standard small cover over $P_{m_2}$.
    
    It remains to show that all nonzero $\alpha$ give diffeomorphic total spaces.
    Let $\alpha,\alpha'\in H^1(\widetilde{\Sigma}_1;\Z_2)$ be nonzero. 
    The mapping class group of $\widetilde{\Sigma}_1$ acts transitively on the nonzero elements of $H^1(\widetilde{\Sigma}_1;\Z_2)$. 
    Indeed, by Poincar\'e duality, nonzero cohomology classes correspond to nonzero mod~$2$ homology classes, and such classes can be represented by nonseparating simple closed curves. 
    The mapping class group is transitive on nonseparating simple closed curves.
    
    Hence there exists a diffeomorphism $f\colon \widetilde{\Sigma}_1\to \widetilde{\Sigma}_1$ such that $f^\ast\alpha'=\alpha$.
    Therefore the principal $\Z_2$-bundle $L_\alpha$ is isomorphic to $f^\ast L_{\alpha'}$. 
    Taking the associated bundle with the same fiber action on $\widetilde{\Sigma}_2$ gives a diffeomorphism
    $$
        L_\alpha\times_{\Z_2}\widetilde{\Sigma}_2 \cong L_{\alpha'}\times_{\Z_2}\widetilde{\Sigma}_2.
    $$
    Thus $M_\alpha\cong M_{\alpha'}$ for all nonzero $\alpha,\alpha'$.
\end{proof}

We now show that the mod~$2$ cohomology ring detects the two cases in Proposition~\ref{prop:two-stage-diffeomorphism-classification}.

\begin{lemma}\label{lem:two-stage-square-rank}
    Let $R_\alpha=H^\ast(M_\alpha;\Z_2)$, and let $q_\alpha \colon R_\alpha^1\to R_\alpha^2$ be the square map on degree-one classes, that is, $q_\alpha(z)=z^2$. Then
    $$
        \rank q_\alpha = \begin{cases} 
            0, & \alpha=0,\\
            m_2 -2, & \alpha\ne0.
        \end{cases}
    $$
\end{lemma}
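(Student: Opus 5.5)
We will compute $q_\alpha$ directly from the Davis--Januszkiewicz presentation \eqref{eq:mod2cohom_smallcover}. Over $\Z_2$ the Frobenius map $z\mapsto z^2$ is additive, so $\rank q_\alpha$ is the dimension of the span of the squares $v_{1,i}^2$ and $v_{2,j}^2$ of the generators. Write $x_i=v_{1,i}$ and $y_j=v_{2,j}$. The linear relations coming from the rows of $\lambda_\beta$ are
$$
\textstyle\sum_i x_i=0,\quad \sum_{i\text{ odd}}x_i=0,\quad \sum_j y_j=0,\quad b+\sum_{j\text{ odd}}y_j=0,
$$
where $b=\sum_i\beta_i x_i$. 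The Stanley--Reisner relations say that $x_ix_k=0$ and $y_jy_l=0$ for nonadjacent indices. Mixed products $x_iy_j$ are unrestricted.

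The basic trick is the following. For any row $r\in\row(\lambda_\beta)$ with $r_F=1$, we have $v_F^2=v_F\cdot\sum_{F'\neq F}r_{F'}v_{F'}$. Only the neighbours of $F$ in its own polygon, together with all facets of the other factor, survive in this product. Applying this to the first-factor variables:
\begin{itemize}
\item for $i$ odd, using the row $(\varepsilon_{m_1},0)$ gives $x_i^2=0$, since both neighbours of $i$ are even;
\item for $i$ even, using the row $(\one_{m_1}+\varepsilon_{m_1},0)$ gives $x_i^2=0$ similarly.
\end{itemize}
Hence $q_\alpha$ kills the span of the $x_i$. For $y_j$ with $j$ odd, the fourth row gives $y_j^2=y_j b$; for $j$ even, the fourth row plus the third gives $y_j^2=y_jb$ as well, because the neighbours of $j$ again drop out. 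By linearity, $q_\alpha(x+y)=b\cdot y$, where $x$ lies in the span of the $x_i$ and $y$ in the span of the $y_j$. If $\alpha=0$, then $b$ lies in the span of the relations, so $b=0$ in $R^1_\alpha$, and $q_\alpha=0$.

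For $\alpha\neq0$, it remains to show that multiplication by $b$ is injective on the image $Y$ of the span of the $y_j$ in $R^1_\alpha$. From the relations, $Y\cong\Z_2^{m_2}/\langle\one_{m_2},\varepsilon_{m_2}\rangle$ modulo the class $b$, and $b$ lies in the $x$-span. So the relevant quotient is $(m_2-2)$-dimensional, which matches $b_1(\widetilde\Sigma_2;\Z_2)$; this gives the upper bound $\rank q_\alpha\le m_2-2$. For injectivity, I plan to use the additive structure of $R^2_\alpha$ coming from the bundle $\widetilde\Sigma_2\to M_\alpha\to\widetilde\Sigma_1$ of Proposition~\ref{prop:two-stage-diffeomorphism-classification}. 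Its Serre spectral sequence with $\Z_2$ coefficients should degenerate, and since the total Betti numbers equal $h_i(P)$ by \eqref{eq:mod2betti}, we get a Leray--Hirsch type decomposition
$$
R^2_\alpha\supset \pi^*H^1(\widetilde\Sigma_1;\Z_2)\cdot Y\cong H^1(\widetilde\Sigma_1;\Z_2)\otimes Y.
$$
Here the fibre involution acts trivially on $H^*(\widetilde\Sigma_2;\Z_2)$, because $y_j\mapsto y_j$ holds in the presentation. With $b=\pi^*\alpha\neq0$, the map $y\mapsto \alpha\otimes y$ is injective, which gives $\rank q_\alpha=m_2-2$.

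The main obstacle is this last step: justifying the tensor decomposition of the mixed part of $R^2_\alpha$ rigorously. As a fallback I would count dimensions directly. Using the Betti numbers $h_2(P)$ and the generating set of $R^2_\alpha$ given by the monomials $x_ix_{i+1}$, $y_jy_{j+1}$ and $x_iy_j$, I would check that the $x_iy_j$ modulo the linear relations span a space of dimension exactly $(m_1-2)(m_2-2)$. That identifies the mixed part of $R^2_\alpha$ with $H^1(\widetilde\Sigma_1;\Z_2)\otimes Y$, and injectivity of $y\mapsto b\cdot y$ for $b\ne0$ follows.
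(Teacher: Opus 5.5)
Your proposal follows essentially the same route as the paper. The Davis--Januszkiewicz presentation gives $x_i^2=0$ and $y_j^2=b\,y_j$, hence $q_\alpha(x+y)=b\cdot y$. The remaining point is injectivity of multiplication by $b=\pi^\ast\alpha$ on the fibre part. The paper only asserts this from the presentation, whereas your Leray--Hirsch argument, with degeneration forced by $\sum_i h_i(P)=m_1m_2$, actually justifies it.

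One slip needs correcting. The fourth row gives $b=\sum_{j\text{ odd}}y_j\in Y$, so $\dim Y=m_2-1$ and $b\cdot b=0$. Hence multiplication by $b$ is not injective on $Y$, and $H^1(\widetilde\Sigma_1;\Z_2)\otimes Y\to R^2_\alpha$ is not injective. You must work with $Y/\langle b\rangle\cong H^1(\widetilde\Sigma_2;\Z_2)$, or with a complement of $\langle b\rangle$ in $Y$, exactly as your own upper-bound computation already does.
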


\begin{proof}
    By~\eqref{eq:mod2cohom_smallcover}, one can see that $u^2=0$ for every $u\in H^1(\widetilde\Sigma_1;\Z_2)$, and $v^2= \alpha v$ for every $v\in H^1(\widetilde\Sigma_2;\Z_2)$.
    
    Since the coefficient field is $\Z_2$, the square map on degree-one classes is linear. 
    If $z=u+v\in R_\alpha^1$, then
    $$
        z^2= \alpha v.
    $$
    Consequently, if $\alpha=0$, then $q_\alpha=0$.
    
    Assume now that $\alpha\ne0$. The image of $q_\alpha$ is $\alpha\cdot H^1(\widetilde\Sigma_2;\Z_2)$.
    Apply again~\eqref{eq:mod2cohom_smallcover}, one can see that multiplication by $\alpha$ is injective on the fiber degree-one part, as desired.
\end{proof}

\begin{theorem}\label{thm:two-stage-mod2-rigidity}
    Let $M$ and $M'$ be symplectic small covers over products of two polygons. 
    If
    $$
        H^\ast(M;\Z_2)\cong H^\ast(M';\Z_2)
    $$
    as graded rings, then $M$ and $M'$ are diffeomorphic.
\end{theorem}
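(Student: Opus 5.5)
Given a symplectic small cover $M$ over $P=P_{m_1}\times P_{m_2}$, Theorem~\ref{thm-two-polygons-factor-compatible} shows that $M$ is factor-compatible. Proposition~\ref{prop:triangle-factor-not-csymp} and Lemma~\ref{lem:odd-factor-not-compatible} then force $m_1,m_2$ to be even and at least $4$. The case $(m_1,m_2)=(4,4)$ is the real Bott case; there I would appeal to the known result that real Bott manifolds are classified up to diffeomorphism by their mod~$2$ cohomology rings (Kamishima--Masuda), so no further work is needed. Otherwise Lemma~\ref{lem:two-stage-normal-form} puts $M$ in the form $M_\alpha$. By Proposition~\ref{prop:two-stage-diffeomorphism-classification}, the diffeomorphism type of $M$ is then determined by three data: the unordered pair of genera $\{\genus_{sc}(m_1),\genus_{sc}(m_2)\}$, which side carries the twisting, and whether $\alpha$ is zero. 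The plan is to recover all three from the graded ring $R=H^\ast(M;\Z_2)$.

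First, I read off numerical invariants. By \eqref{eq:mod2betti}, $b_1(M;\Z_2)=h_1(P)=m_1+m_2-4$. Also $b_2(M;\Z_2)=h_2(P)$, and for a product of polygons this equals $(m_1-2)(m_2-2)+2$. Together these determine the unordered pair $\{m_1,m_2\}$. Second, Lemma~\ref{lem:two-stage-square-rank} distinguishes $\alpha=0$ from $\alpha\ne0$, since it gives $\operatorname{rank} q_\alpha=0$ versus $m_2-2>0$. When $\alpha\neq 0$, the same rank identifies $m_2$, the genus datum of the fiber factor in the normal form (after permuting factors as in Lemma~\ref{lem:two-stage-normal-form}). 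So the ordered pair, with base $\widetilde\Sigma_1$ over $P_{m_1}$ and fiber over $P_{m_2}$, is recovered from $R$. If $m_1=m_2$ there is no ambiguity to begin with.

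Combining these, if $R\cong R'$ then $M$ and $M'$ share $\{m_1,m_2\}$, both have $\alpha=0$ or both have $\alpha\neq0$, and in the twisted case they have the same fiber polygon. Proposition~\ref{prop:two-stage-diffeomorphism-classification} then gives a diffeomorphism: in the untwisted case both are $\widetilde\Sigma_1\times\widetilde\Sigma_2$, and in the twisted case both are the common total space $L_\alpha\times_{\Z_2}\widetilde\Sigma_2$.

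I expect the main obstacle to be the bookkeeping around the normal form. Lemma~\ref{lem:two-stage-normal-form} allows a permutation of the factors, so I must check that the rank $m_2-2$ really refers to the fiber side, and that the base and fiber roles cannot be exchanged when $m_1\ne m_2$. The rank statement in Lemma~\ref{lem:two-stage-square-rank} settles this once stated intrinsically. I also need the $(4,4)$ case to be covered cleanly; if I want to avoid citing the external real Bott classification, I would note that in the $(4,4)$ case the normal form still applies whenever $\chi_1,\chi_2\in\row(\lambda)$, and treat the remaining real Bott configurations by the same square-map rank argument.
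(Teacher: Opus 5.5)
Your proposal is correct and follows essentially the same route as the paper. The $h$-vector determines $\{m_1,m_2\}$, and Lemma~\ref{lem:two-stage-normal-form} reduces to $M_\alpha$. The rank of the square map on degree-one classes detects both whether $\alpha\neq0$ and which polygon is the fiber, and Proposition~\ref{prop:two-stage-diffeomorphism-classification} finishes. Your separate treatment of the $(4,4)$ case via Kamishima--Masuda is a reasonable extra precaution. There, factor-compatibility is the $\delta$-pairing condition rather than $\chi_1,\chi_2\in\row(\lambda)$, and the paper's uniform appeal to the normal form does not address this explicitly.
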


\begin{proof}
    Since products of two polgyons are distinct by their face numbers, they are decided by $H^\ast(M;\Z_2)$ by \eqref{eq:mod2betti}.
    Put $P =P_{m_1} \times P_{m_2}$.
    By Lemma~\ref{lem:two-stage-normal-form}, we may write $M=M_\alpha$ for some $\alpha\in H^1(\widetilde\Sigma_1;\Z_2)$, where $M_\alpha$ is represented by the matrix~\eqref{eq:two-stage-normal-form}. 
    Then, the diffeomorphism type is determined by the nonzeroness of $\alpha$ by Proposition~\ref{prop:two-stage-diffeomorphism-classification}, and it is detected by the $H^\ast(M_\alpha;\Z_2)$ by Lemma~\ref{lem:two-stage-square-rank}.
    If $\alpha \neq 0$, the rank $q$ only depends on $m_2$. 
    Hence, one can even decide the which factor is base.
    
    By \eqref{eq:mod2betti}, the graded mod~$2$ cohomology ring determines the $h$-vector of the orbit polytope. 
    For a product of two polygons $P_{m_1}\times P_{m_2}$, this $h$-vector determines the unordered pair $\{m_1,m_2\}$. 
    Hence, after possibly interchanging the two factors, we may assume that both $M$ and $M'$ lie over the same product $P=P_{m_1} \times P_{m_2}$.
    
    By Lemma~\ref{lem:two-stage-normal-form}, after possibly interchanging the two factors, we may write $M=M_\alpha$ for some $\alpha\in H^1(\widetilde\Sigma_1;\Z_2)$, where $M_\alpha$ is represented by the normal form \eqref{eq:two-stage-normal-form}. 
    Similarly, write $M'=M_{\alpha'}$.
    
    By Proposition~\ref{prop:two-stage-diffeomorphism-classification}, for fixed $m_1$ and $m_2$, the diffeomorphism type of $M_\alpha$ depends only on whether
    $\alpha$ is zero or nonzero. 
    By Lemma~\ref{lem:two-stage-square-rank}, this condition is detected by the graded ring $H^\ast(M_\alpha;\Z_2)$, namely by the rank of the square map on degree-one classes.
    Thus $\alpha=0$ if and only if $\alpha'=0$.
    
    If $\alpha=0$, then both manifolds are diffeomorphic to $\widetilde\Sigma_1\times \widetilde\Sigma_2$.
    If $\alpha\ne0$, then Lemma~\ref{lem:two-stage-square-rank} gives $\rank(x\mapsto x^2)=m_{\mathrm{fiber}}-2$ so the graded ring also determines which factor is the base, except in the case $m_1=m_2$, where the distinction is irrelevant. 
    Proposition~\ref{prop:two-stage-diffeomorphism-classification} then implies that all nonzero choices of $\alpha$ give diffeomorphic total spaces.
    Therefore $M$ and $M'$ are diffeomorphic.
\end{proof}

The property that the diffeomorphism type of a small cover is determined by its mod~$2$ cohomology ring is called a \emph{mod~$2$ cohomology rigidity}.
It does not hold for general small covers~\cite{Masuda2010}, but it holds for some specific cases such as real Bott manifolds~\cite{Kamishima-Masuda2009,Choi-Masuda-Oum2017}.
See~\cite{Choi-Masuda-Suh2011} for details.

\begin{corollary} \label{cor:number_of_diffeo_types}
    Let $m_1$ and $m_2$ be even integers with $m_1,m_2\ge 4$, and put $P=P_{m_1}\times P_{m_2}$.
    Up to diffeomorphism, the number of symplectic small covers over $P$ is
    $$
        \begin{cases}
            2, & m_1=m_2,\\
            3, & m_1\ne m_2.
        \end{cases}
    $$
\end{corollary}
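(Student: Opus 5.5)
By Theorem~\ref{thm-two-polygons-factor-compatible}, a small cover over $P$ is symplectic exactly when it is factor-compatible. So the task is to count the diffeomorphism classes among the manifolds $M_\alpha$ of Lemma~\ref{lem:two-stage-normal-form}. For each ordered choice of base factor there are two cases: $\alpha=0$, giving the product, and $\alpha\neq 0$. The plan is to list the candidates this produces, then separate them using the mod~$2$ cohomology invariants already computed.

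When $(m_1,m_2)=(4,4)$ the normal form of Lemma~\ref{lem:two-stage-normal-form} was stated for two-stage factor-compatible covers. I will treat this case by the same normal form, i.e.\ assume every symplectic cover over $P_4\times P_4$ is of the form~\eqref{eq:two-stage-normal-form}. If that fails for genuinely Bott examples, I would instead use the known real Bott classification via Ishida's c-symplecticity criterion. Under this assumption, the candidates are:
\begin{itemize}
\item the product $\widetilde\Sigma_1\times\widetilde\Sigma_2$, coming from $\alpha=0$ in either direction;
\item $M_\alpha$ with base $\widetilde\Sigma_1$ and $\alpha\neq0$;
\item $M_{\alpha'}$ with base $\widetilde\Sigma_2$ and $\alpha'\neq0$.
\end{itemize}
By Proposition~\ref{prop:two-stage-diffeomorphism-classification}, all nonzero $\alpha$ over a fixed base give one diffeomorphism type. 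Hence there are at most three types in general. When $m_1=m_2$, interchanging the factors identifies the last two items, so there are at most two types.

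For the lower bound I would use Lemma~\ref{lem:two-stage-square-rank}, because the rank of the squaring map $H^1(-;\Z_2)\to H^2(-;\Z_2)$ is a diffeomorphism invariant. This rank is $0$ for the product. It is $m_2-2$ when $\widetilde\Sigma_1$ is the base and $m_1-2$ when $\widetilde\Sigma_2$ is the base; both are positive since $m_i\ge4$. So the twisted manifolds are never diffeomorphic to the product. When $m_1\ne m_2$ the two ranks differ, so the two twisted types are also distinct. This gives exactly three types when $m_1\ne m_2$ and two when $m_1=m_2$.

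The main obstacle is checking that Lemma~\ref{lem:two-stage-square-rank} really gives rank $m_{\mathrm{fiber}}-2$. This requires multiplication by a nonzero $\alpha$ to be injective on the $(m_{\mathrm{fiber}}-2)$-dimensional fiber part $H^1(\widetilde\Sigma_{\mathrm{fiber}};\Z_2)$ of $H^2(M_\alpha;\Z_2)$. I would verify this in the Davis--Januszkiewicz presentation~\eqref{eq:mod2cohom_smallcover}, using the Leray--Hirsch/Künneth description $H^\ast(M_\alpha;\Z_2)\cong H^\ast(\widetilde\Sigma_1;\Z_2)\otimes H^\ast(\widetilde\Sigma_2;\Z_2)$ as modules. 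Here $\alpha\cdot v$ is the product of a base class with a fiber class, and such products are independent for $\alpha\ne0$ and linearly independent fiber classes $v$. The degenerate case $(4,4)$ is consistent with this count: there $m_1=m_2$, and the answer $2$ matches the product torus together with the single nontrivial twisted type.
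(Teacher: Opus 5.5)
Your proposal is correct and is essentially the paper's argument. Theorem~\ref{thm-two-polygons-factor-compatible} and Lemma~\ref{lem:two-stage-normal-form} reduce everything to the manifolds $M_\alpha$, Proposition~\ref{prop:two-stage-diffeomorphism-classification} gives the upper bound, and the rank of $x\mapsto x^2$ from Lemma~\ref{lem:two-stage-square-rank} separates the types. The assumption you make for $(4,4)$ is harmless, and the paper uses it implicitly too. Automorphisms of the $4$-cube permute the four opposite-facet pairs arbitrarily, so any admissible pairing of $\delta_{1,1},\delta_{1,2},\delta_{2,1},\delta_{2,2}$ can be moved to the one giving $\chi_1,\chi_2\in\row(\lambda)$, after which the normal-form argument applies verbatim.
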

\begin{proof}
    The zero off-diagonal class gives the product type, and for each ordered base-fiber choice all nonzero classes give one non-product type.
    Two choices are identified when $m_1=m_2$, while for $m_1\ne m_2$ they are distinguished by $\rank(x\mapsto x^2)$.
\end{proof}

\section{Symplectic small covers beyond products of two polygons}

In the previous sections, we proved that the flag condition is an important obstruction for four-dimensional small covers and real moment-angle complexes.
At present, however, we do not know any flag simplicial~$3$-sphere $K$ for which $\RZ_K$ is not symplectic.
This suggests that flagness may be close to the right combinatorial condition.

On the other hand, explicit symplectic examples are still quite limited.
So far, the most accessible family comes from products of two polygons.
It is not clear a priori whether there exist symplectic small covers over flag spheres which are not dual to products of two polygons.
The goal of this section is to show that such examples do exist.

Our main tool is the following criterion for products with a circle.
For a closed oriented $3$-manifold $N$, the manifold $S^1\times N$ is symplectic if and only if $N$ fibers over $S^1$.
The ``if'' direction goes back to Thurston~\cite{Thurston1976}, and the converse was proved by Friedl and Vidussi~\cite{Friedl-Vidussi2011}.

This leads us naturally to the case 
$$
    K=\partial(I\times Q)^\ast,
$$
where $Q$ is a simple $3$-polytope.
Indeed, $\partial(I\times Q)^\ast=\partial I^\ast \join \partial Q^\ast$, and hence
$$
    \RZ_K \cong \RZ_{\partial I^\ast}\times \RZ_{\partial Q^\ast} \cong S^1\times \RZ_{\partial Q^\ast}.
$$
Therefore, if $\RZ_{\partial Q^\ast}$ fibers over $S^1$, then $\RZ_K$ is symplectic.

If $Q$ is decomposable, then since $\dim Q=3$, it must be of the form $I\times P_m$ for some polygon $P_m$.
In that case, $I\times Q \cong P_4\times P_m$, which is again a product of two polygons.
Thus, in order to find genuinely new examples, it is enough to consider the case where $Q$ is indecomposable.

We begin with a brief review of the general background on fibrations of $3$-manifolds over the circle.
After that, we introduce a specific indecomposable simple $3$-polytope $Q$, let $L=\partial Q^\ast$, and construct a symplectic small cover over~$I\times Q$.

We recall a standard way to prove that a closed $3$-manifold fibers over the circle.
Let $N$ be a closed orientable irreducible $3$-manifold.
By Stallings' fibering theorem~\cite{Stallings1961}, if a primitive class $\phi\in H^1(N;\Z)$ has finitely generated kernel $\ker\phi\subset \pi_1(N)$, then $\phi$ is represented by a fibration 
$$
    N\to S^1.
$$
Thus, in our setting, it is enough to construct such a class and prove that its kernel is finitely generated.

We shall use Bestvina--Brady Morse theory~\cite{Bestvina-Brady1997} for this purpose.
Let $L$ be a flag simplicial~$2$-sphere on the vertex set $[m]$, and let $\mu\colon \Z_2^m\to \Z_2^3$ be a mod~$2$ characteristic map over~$L$.
Write $\mu_i:=\mu(e_i)\in \Z_2^3$ for $i\in [m]$.
Let $N=M^\R(L,\mu)$ be the corresponding orientable small cover.
We use the cubewise cell structure on~$N$.
Its vertices are indexed by elements of~$\Z_2^3$, and the oriented edge in the $i$th direction at a vertex $g\in\Z_2^3$ goes from~$g$ to~$g+\mu_i$.

Let $c$ be an integral cubewise $1$-cocycle on $N$.
Assume that $c$ is nonzero on every oriented edge and is affine on every square.
Equivalently, opposite parallel edges in each square have the same value under $c$.
Then $c$ lifts to a cubewise Morse function on the universal cover of $N$ and defines a homomorphism
$$
    [c]\colon \pi_1(N)\to \Z.
$$

For each vertex $g\in\Z_2^3$, define
$$
    P_g=\{i\mid c(g,i)>0\} \quad \text{ and } \quad N_g=\{i\mid c(g,i)<0\}.
$$
The ascending and descending links at $g$ are the induced subcomplexes~$L_{P_g}$ and~$L_{N_g}$, respectively.

The following criterion is a standard consequence of Stallings' fibering theorem and Bestvina--Brady Morse theory.
\begin{proposition}\label{prop:fibering-criterion-small-cover}
    Let $L$ be a flag simplicial~$2$-sphere, and let
    $N=M^\R(L,\mu)$ be an orientable small cover.
    Suppose that there exists an integral cubewise $1$-cocycle~$c$ satisfying
    the following conditions.
    \begin{enumerate}
        \item $c$ is nonzero on every oriented edge.
        \item $c$ is affine on every square.
        \item For every vertex $g\in\Z_2^3$, the induced subcomplexes $L_{P_g}$ and $L_{N_g}$ are nonempty and connected.
    \end{enumerate}
    If the image of $[c]\colon\pi_1(N)\to\Z$ is $d\Z$ for some $d>0$, then the primitive class
    $$
        \phi=\frac{1}{d}[c]\in H^1(N;\Z)
    $$
    is represented by a fibration $N\to S^1$.
\end{proposition}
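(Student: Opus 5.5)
We pass to a covering on which $c$ becomes a genuine Morse function, use Bestvina--Brady to get finite generation of the kernel, and finish with Stallings' theorem.

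\textbf{Step 1: the covering and the Morse function.} Lift the cube complex structure of $N$ to its universal cover $\widetilde N$. Since $L$ is a flag $2$-sphere, the real toric space $N$ is a nonpositively curved cube complex (Gromov's link condition: the vertex link is $L$, which is flag). Hence $\widetilde N$ is a CAT(0) cube complex, contractible, and $N$ is aspherical. Conditions (1) and (2) say that $c$ integrates to a $\pi_1(N)$-equivariant function $f\colon\widetilde N\to\R$. This $f$ is affine on each cube, nonconstant on every edge, and $[c]$-equivariant: $f(\gamma x)=f(x)+[c](\gamma)$. Thus $f$ is a Morse function in the sense of Bestvina--Brady. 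Its ascending and descending links at a vertex lying over $g$ are the full subcomplexes $L_{P_g}$ and $L_{N_g}$. Here we use that an affine function on a cube attains its maximum and minimum at vertices, and that the cells of the link are spanned exactly by the positive, respectively negative, directions.

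\textbf{Step 2: Bestvina--Brady.} Set $H=\ker[c]=\ker\phi$. The group $H$ acts cocompactly on each level set $f^{-1}(t)$, since $\pi_1(N)/H\cong d\Z$ acts cocompactly on $\R$. By the Bestvina--Brady Morse lemma, passing a critical level attaches cones over the ascending or descending links. Because these links are nonempty and connected by (3), the connectivity of $f^{-1}([a,b])$ is preserved as $[a,b]$ grows. In the limit $\widetilde N$ is contractible, so the level set $f^{-1}(t)$ is connected, and the homotopy equivalences give that it is simply connected modulo the links' $\pi_1$. Bestvina--Brady's theorem (the $n=1$ case: links connected implies $H$ finitely generated) then yields that $H$ is finitely generated. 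Concretely, $H$ acts cocompactly on the connected complex $f^{-1}([a,b])$ for a suitable window, and a group acting properly and cocompactly on a connected complex is finitely generated. We also need $H$ to be the full kernel and not just the stabilizer of a component; this follows from the connectedness of the level sets.

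\textbf{Step 3: Stallings.} The class $\phi=\frac1d[c]$ is primitive and $\ker\phi=H$ is finitely generated. The manifold $N$ is closed and orientable, and it is irreducible because it is aspherical with universal cover $\R^3$ (sphere theorem). Moreover $\pi_1(N)$ is not $\Z_2$. Stallings' fibering theorem therefore represents $\phi$ by a fibration $N\to S^1$.

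\textbf{Main obstacle.} I expect the main difficulty in Step 2. One must check that condition (3), applied only at the eight vertex types $g\in\Z_2^3$, suffices for the Bestvina--Brady conclusion on all of $\widetilde N$. This works because the local data is invariant under deck transformations. One must also check that the affine-on-squares condition genuinely makes $f$ Morse. The flagness of $L$ is used for the CAT(0) property and asphericity, which feed both Bestvina--Brady and irreducibility.
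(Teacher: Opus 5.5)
Your proposal is correct and follows the same route as the paper's (very brief) argument. The affine cocycle integrates to a Bestvina--Brady Morse function on the CAT(0) universal cover, with ascending and descending links $L_{P_g}$ and $L_{N_g}$; connectedness of these links gives finite generation of $\ker[c]=\ker\phi$; and flagness gives asphericity, hence irreducibility, so Stallings' theorem applies to the primitive class $\phi$.

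One small correction: the exceptional case in Stallings' theorem is that the kernel $\ker\phi$ has order two, not $\pi_1(N)$. That case is excluded because $\pi_1(N)$ is torsion-free, since $N$ is aspherical.
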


Indeed, the affine cocycle gives a cubewise Morse function on the universal cover. 
The connectivity assumption on the ascending and descending links implies, by Bestvina--Brady Morse theory~\cite{Bestvina-Brady1997}, that $\ker[c]$ is finitely generated. 
Since $L$ is flag, $N$ is aspherical and hence irreducible, so Stallings' theorem~\cite{Stallings1961} applies to the primitive class $\phi$.

\subsection{A symplectic small cover example over $I\times Q$}
We now describe our example.

Let $L$ be the flag simplicial complex on the vertex set $\{0,\ldots, 9\}$ whose facets are
$$
    \begin{aligned}
        &(0,6,8), (0,6,9), (0,7,8), (0,7,9), (1,2,5), (1,2,6), (1,3,5), (1,3,7),\\
        &(1,6,8), (1,7,8), (2,4,5), (2,4,6), (3,4,5), (3,4,7), (4,6,9), (4,7,9).
    \end{aligned}
$$
Let $Q$ be the simple $3$-polytope dual to $L$, so that $L=\partial Q^\ast$.
See Figure~\ref{fig:Q_L}
\begin{figure}
    \centering
    \begin{tikzpicture}
    [
        scale=0.8,
        line join=round,
        line cap=round,
        visible/.style={draw=black, line width=1pt},
        hidden/.style={draw=black, dotted, line width=0.5pt}
    ]
    
    \coordinate (Aone)   at (-2.00, 3.20);
    \coordinate (Bone)   at ( 2.00, 3.20);
    \coordinate (Cone)   at ( 2.67, 4);
    \coordinate (Done)   at ( 1.34, 5);
    \coordinate (Eone)   at (-1.34, 5);
    \coordinate (Fone)   at (-2.67, 4);
    
    \coordinate (Atwo)   at (-2.00, 0.00);
    \coordinate (Btwo)   at ( 2.00, 0.00);
    \coordinate (Ctwo)   at ( 2.67, 0.80);
    \coordinate (Dtwo)   at ( 1.34, 2.46);
    \coordinate (Etwo)   at (-1.34, 2.46);
    \coordinate (Ftwo)   at (-2.67, 0.80);
    
    \coordinate (Athree) at (-1.34, 1.85);
    \coordinate (Bthree) at ( 1.34, 1.85);
    \coordinate (Afour)  at (-1.34, 0.54);
    \coordinate (Bfour)  at ( 1.34, 0.54);
    
    \draw[visible] (Aone)--(Bone)--(Cone)--(Done)--(Eone)--(Fone)--cycle;
    
    \draw[visible] (Fone)--(Ftwo);
    \draw[visible] (Cone)--(Ctwo);
    \draw[visible] (Ftwo)--(Atwo)--(Btwo)--(Ctwo);
    
    \draw[visible] (Aone)--(Athree)--(Afour)--(Atwo);
    \draw[visible] (Bone)--(Bthree)--(Bfour)--(Btwo);
    \draw[visible] (Athree)--(Bthree);
    \draw[visible] (Afour)--(Bfour);
    
    \draw[hidden] (Done)--(Dtwo);
    \draw[hidden] (Eone)--(Etwo);
    \draw[hidden] (Etwo)--(Dtwo);
    \draw[hidden] (Dtwo)--(Ctwo);
    \draw[hidden] (Etwo)--(Ftwo);
    
    \foreach \P in {Aone,Bone,Cone,Done,Eone,Fone,Atwo,Btwo,Ctwo,Dtwo,Etwo,Ftwo,Athree,Bthree,Afour,Bfour}
        \fill (\P) circle (1.1pt);
    
    \end{tikzpicture}
    \quad
    \begin{tikzpicture}[scale=0.8,line join=round,line cap=round]

\coordinate (v5) at ( 2.5, 2.5);

\coordinate (v1) at ( 0,  2);
\coordinate (v8) at ( 0,  1);
\coordinate (v0) at ( 0,  0);
\coordinate (v9) at ( 0, -1);
\coordinate (v4) at ( 0, -2);

\coordinate (v2) at ( -2,  0);
\coordinate (v6) at ( -1,  0);

\coordinate (v3) at (2, 0);
\coordinate (v7) at (1, 0);

\coordinate (v5u) at ( 0, 3);
\coordinate (v5d) at (0, -3);
\coordinate (v5r) at ( 3, 0);
\coordinate (v5l) at (-3, 0);

\draw[thick] (v0)--(v6);
\draw[thick] (v0)--(v8);
\draw[thick] (v0)--(v9);
\draw[thick] (v0)--(v7);

\draw[thick] (v1)--(v2);
\draw[thick] (v1)--(v3);
\draw[thick] (v1)--(v6);
\draw[thick] (v1)--(v8);
\draw[thick] (v1)--(v5u);
\draw[thick] (v1)--(v7);

\draw[thick] (v2)--(v5l);
\draw[thick] (v2)--(v6);
\draw[thick] (v2)--(v4);

\draw[thick] (v3)--(v4);
\draw[thick] (v3)--(v5r);
\draw[thick] (v3)--(v7);

\draw[thick] (v4)--(v5d);
\draw[thick] (v4)--(v6);
\draw[thick] (v4)--(v7);
\draw[thick] (v4)--(v9);

\draw[thick] (v6)--(v8);
\draw[thick] (v6)--(v9);

\draw[thick] (v7)--(v8);
\draw[thick] (v7)--(v9);

\foreach \i in {0,1,2,3,4,5, 6,7,8,9}{
    \fill (v\i) circle (1.5pt);
}

\node[right] at (v1) {$\scriptstyle 1$};
\node[above left] at (v6) {$\scriptstyle 6$};
\node[above] at (v2) {$\scriptstyle 2$};
\node[right] at (v4) {$\scriptstyle 4$};
\node[right] at (v5) {$\scriptstyle 5$};
\node[above] at (v3) {$\scriptstyle 3$};
\node[above right] at (v7) {$\scriptstyle 7$};

\node[right] at (v8) {$\scriptstyle 8$};
\node[below right] at (v0) {$\scriptstyle 0$};
\node[right] at (v9) {$\scriptstyle 9$};

\end{tikzpicture}
    \caption{The polytope $Q$ and a Schlegel diagram of its dual complex $L$. 
    In the diagram of $L$, the vertex $5$ is placed at infinity.}
    \label{fig:Q_L}
\end{figure}

The polytope $Q$ is indecomposable, so $I\times Q$ is not a product of two polygons.
It has six square faces, and four hexagonal faces.

Let $L=\partial Q^\ast$.
Define a linear map $\mu\colon \Z_2^{10}\to \Z_2^3$ by the characteristic matrix
$$
    \mu=
        \begin{pNiceArray}{cccccccccc}[first-row,first-col]
                & \scriptstyle 0 & \scriptstyle 1 & \scriptstyle 2 & \scriptstyle 3
                & \scriptstyle 4 & \scriptstyle 5 & \scriptstyle 6
                & \scriptstyle 7 & \scriptstyle 8 & \scriptstyle 9 \\
        \scriptstyle e_1
                & 1&1&0&0&1&0&0&0&0&0\\
        \scriptstyle e_2
                & 0&0&0&0&0&1&1&1&0&0\\
        \scriptstyle e_3
                & 0&0&1&1&0&0&0&0&1&1
        \end{pNiceArray}.
$$

\begin{lemma}\label{lem:q10-mu-characteristic}
    The map $\mu$ is a mod~$2$ characteristic map over $L$.
    The associated small cover $N_\mu=M(Q,\mu)=M^\R(L,\mu)$ is orientable.
\end{lemma}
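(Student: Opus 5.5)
The plan has two parts, both pure linear algebra over $\Z_2$. For the characteristic condition, I first record the columns of $\mu$: vertices $0,1,4$ are sent to $e_1$, vertices $5,6,7$ to $e_2$, and vertices $2,3,8,9$ to $e_3$.

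Every column is one of the three standard basis vectors. So a set of vertices of $L$ gives linearly independent columns exactly when no two of them lie in the same colour class. Since $L$ is a pure $2$-dimensional complex, it is enough to check the sixteen listed facets $(a,b,c)$. For each one I verify that $a$, $b$, $c$ lie in three different classes among
$$
    \{0,1,4\},\quad \{5,6,7\},\quad \{2,3,8,9\}.
$$
Going through the list: $(0,6,8)$ is $e_1,e_2,e_3$, and $(0,6,9)$, $(0,7,8)$, $(0,7,9)$ are the same; $(1,2,5)$ is $e_1,e_3,e_2$, and so on. Each facet has exactly one vertex in each class. In other words, $\mu$ is a proper $3$-colouring of the $1$-skeleton of $L$. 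Equivalently, I check that no edge of $L$ joins two vertices of the same class: the sets $\{0,1,4\}$, $\{5,6,7\}$, $\{2,3,8,9\}$ are independent in the edge list read off from the facets. This gives nonsingularity on every simplex, and hence on faces of facets automatically.

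For orientability I use Corollary~\ref{cor:NN}. The rows of $\mu$ are the indicator vectors of the three colour classes, which partition $\{0,\dots,9\}$. Therefore the sum of the three rows is $\one_{10}$, so $\one_{10}\in\row(\mu)$ and $N_\mu$ is orientable.

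There is no real obstacle here. The only care needed is the bookkeeping of the sixteen facets, which is routine.
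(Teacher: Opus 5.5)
Your proposal is correct and follows the paper's proof: each of the sixteen facets of $L$ has exactly one vertex in each of the classes $\{0,1,4\}$, $\{5,6,7\}$, $\{2,3,8,9\}$, so the three columns form a basis of $\Z_2^3$. The rows of $\mu$ are the indicator vectors of these classes, so they sum to $\one_{10}\in\row(\mu)$, and Corollary~\ref{cor:NN} gives orientability.
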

\begin{proof}
    From the facet list of $L$, each facet of $L$ contains one vertex from each
    of the three sets $\{0,1,4\}$, $\{5,6,7\}$, and $\{2,3,8,9\}$.
    Hence the corresponding three columns of $\mu$ form a basis of $\Z_2^3$, so $\mu$ is a mod~$2$ characteristic map over $L$.

    Also, the sum of the three rows of $\mu$ is $\one_{10}$. 
    Thus $\one_{10}\in\row(\mu)$, and $N_\mu$ is orientable by Corollary~\ref{cor:NN}.
\end{proof}

We now prove that $N_\mu$ fibers over $S^1$.  
The vertices of the cubewise model of $N_\mu$ are indexed by $g\in\Z_2^3$, and the oriented edge in the $i$-direction at $g$ goes from $g$ to $g+\mu_i$.

Let $\epsilon=(0,1,0,0,0,0,1,0,0,0)\in \Z_2^{10}$.
Define an integral $1$-cochain $c$ by $c(g,i)=(-1)^{\langle \mu_i,g\rangle+\epsilon_i}$.
Here $\langle-,-\rangle$ is the standard pairing on $\Z_2^3$.

\begin{lemma} \label{lem:q10-affine-cocycle}
    The cochain $c$ defines a cubewise affine integral Morse function on the universal cover of $N_\mu$.  
    In particular, it defines a homomorphism 
    $$
        [c] \colon \pi_1(N_\mu)\to \Z.
    $$
\end{lemma}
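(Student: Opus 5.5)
The plan is to reduce everything to two parity facts about the columns of $\mu$, and then check the cocycle, affineness and Morse conditions cell by cell in the cubewise structure. Every column $\mu_i$ is a standard basis vector of $\Z_2^3$: columns $0,1,4$ equal $e_1$, columns $5,6,7$ equal $e_2$, and columns $2,3,8,9$ equal $e_3$. This gives the following.
\begin{enumerate}
\item $\langle \mu_i,\mu_i\rangle=1$ for every $i$.
\item If $\{i,j\}$ is an edge of $L$, then $\mu_i\neq\mu_j$. This holds by the characteristic condition of Lemma~\ref{lem:q10-mu-characteristic}, since every edge lies in a facet and facets meet each of the three color classes once. Hence $\langle\mu_i,\mu_j\rangle=0$.
\end{enumerate}

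First I would check that $c$ is a well-defined cochain on oriented edges. The edge in direction $i$ at $g$, traversed backwards, is the edge in direction $i$ at $g+\mu_i$. Consistency therefore requires $c(g+\mu_i,i)=-c(g,i)$. Indeed,
$$
c(g+\mu_i,i)=(-1)^{\langle\mu_i,g\rangle+\langle\mu_i,\mu_i\rangle+\epsilon_i}=-c(g,i)
$$
by fact (1). Clearly $c(g,i)=\pm1\neq 0$ on every oriented edge.

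Next I would check affineness on squares. A square at $g$ corresponds to an edge $\{i,j\}$ of $L$ and has vertices $g$, $g+\mu_i$, $g+\mu_j$, $g+\mu_i+\mu_j$. Opposite parallel edges have equal values because
$$
c(g+\mu_j,i)=(-1)^{\langle\mu_j,\mu_i\rangle}c(g,i)=c(g,i)
$$
by fact (2), and symmetrically for $j$. Then the boundary sum is
$$
c(g,i)+c(g+\mu_i,j)-c(g+\mu_j,i)-c(g,j)=0.
$$
So $c$ is a cellular cocycle, and it is affine on every square. For the $3$-cubes (triangles of $L$), affineness follows because all their $2$-faces are affine with consistent edge values.

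Finally I would pass to the universal cover $\widetilde N_\mu$. This is a cube complex, and it is CAT(0) because $L$ is flag, although only simple connectivity is needed here. Fix a base vertex, set $f=0$ there, and define $f$ on vertices by summing the pulled-back $c$ along edge paths. The cocycle condition makes this path-independent, since every loop in the $2$-skeleton of a simply connected cube complex is a product of square boundaries. Extending $f$ affinely over each cube is consistent by the affineness just shown. It is nonconstant on every edge, so $f$ is a Morse function in the sense of Bestvina--Brady, with integral values at vertices.

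Deck transformations shift $f$ by the integral of $c$ along the corresponding loop. This gives the homomorphism $[c]\colon\pi_1(N_\mu)\to\Z$, which is the image of the cohomology class of $c$ under $H^1(N_\mu;\Z)\cong\operatorname{Hom}(\pi_1(N_\mu),\Z)$.

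The only step with real content is the sign-consistency and affineness check, and it is settled by facts (1) and (2). I expect no serious obstacle. The one point to state carefully is that the generated relations are exactly the square boundaries, so that path-independence on the universal cover holds.
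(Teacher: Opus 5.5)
Your proposal is correct and follows the paper's proof. The paper uses the same two facts, $\langle\mu_i,\mu_i\rangle=1$ (for consistency under edge reversal) and $\langle\mu_i,\mu_j\rangle=0$ for every edge $\{i,j\}$ of $L$ (for the square-affine condition), and you additionally spell out the integration of the cocycle on the universal cover and the resulting deck-transformation homomorphism, which the paper leaves implicit.
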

\begin{proof}
    First, $c(g+\mu_i,i)=-c(g,i)$ because $\langle \mu_i,\mu_i\rangle=1$ for $\mu_i\in\{e_1,e_2,e_3\}$.
    Thus the assignment is compatible with reversing the orientation of an edge.
    
    Next let $\{i,j\}$ be an edge of $L$.  
    Since every edge of $L$ joins vertices of different colors, $\mu_i\neq\mu_j$.  
    As $\mu_i,\mu_j$ are distinct standard basis vectors, one has $\langle \mu_i,\mu_j\rangle=0$.
    Therefore 
    $$
            c(g+\mu_j,i)=c(g,i) \quad \text{ and } \quad c(g+\mu_i,j)=c(g,j).
    $$
    Thus opposite parallel edges of every square have the same slope.  
    This is exactly the cubewise affine condition.  
    The induced affine function on the universal cover has no zero slope along any edge, and hence is a cubewise Morse function.
\end{proof}

For each $g\in\Z_2^3$, set
$$
    P_g=\{i\mid c(g,i)>0\} \quad \text{ and } \quad N_g=\{i\mid c(g,i)<0\}.
$$
The values are as follows, where $g = (g_1, g_2, g_3)^T \in \Z_2^3$ is encoded by the integer $g_1+2g_2+4g_3$.
$$
    \begin{array}{c|l|l}
         g & P_g & N_g\\
        \hline
        0 & 0,2,3,4,5,7,8,9 & 1,6\\
        1 & 1,2,3,5,7,8,9 & 0,4,6\\
        2 & 0,2,3,4,6,8,9 & 1,5,7\\
        3 & 1,2,3,6,8,9 & 0,4,5,7\\
        4 & 0,4,5,7 & 1,2,3,6,8,9\\
        5 & 1,5,7 & 0,2,3,4,6,8,9\\
        6 & 0,4,6 & 1,2,3,5,7,8,9\\
        7 & 1,6 & 0,2,3,4,5,7,8,9.
    \end{array}
$$

\begin{lemma}\label{lem:q10-connected-links}
    For every $g\in\Z_2^3$, the induced subgraphs $L_{P_g}^{(1)}$ and $L_{N_g}^{(1)}$ are nonempty and connected.
\end{lemma}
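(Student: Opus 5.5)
The plan is to reduce the sixteen connectivity checks to four, using a symmetry of the cochain $c$ and combinatorial Alexander duality on the $2$-sphere $L$.

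\emph{Symmetry.} Put $\mathbf{t}=(1,1,1)^T\in\Z_2^3$. Every $\mu_i$ is one of $e_1,e_2,e_3$, so $\langle \mu_i,\mathbf t\rangle=1$ for all $i$. The definition $c(g,i)=(-1)^{\langle\mu_i,g\rangle+\epsilon_i}$ then gives $c(g+\mathbf t,i)=-c(g,i)$. Hence $N_g=P_{g+\mathbf t}$ for every $g$, which matches the table ($g\leftrightarrow 7-g$). So the sixteen subgraphs in the statement are just the eight subgraphs $L^{(1)}_{P_g}$, $g\in\Z_2^3$. They are nonempty by inspection of the table. Also, since $c$ never vanishes, $P_g$ and $N_g=P_{g+\mathbf t}$ are complementary in $\{0,\dots,9\}$. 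Thus the eight sets are four small sets together with their complements:
$$
\{1,6\},\ \{0,4,6\},\ \{1,5,7\},\ \{0,4,5,7\}.
$$

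\emph{The small sets.} I will read the edges of $L$ off the facet list.
\begin{itemize}
\item $\{1,6\}$: $1$--$6$ is an edge (facet $(1,2,6)$).
\item $\{0,4,6\}$: the edges $0$--$6$ and $4$--$6$ are present.
\item $\{1,5,7\}$: the edges $1$--$5$ and $1$--$7$ are present.
\item $\{0,4,5,7\}$: the edges $0$--$7$, $4$--$7$ and $4$--$5$ are present.
\end{itemize}
So each small set induces a connected graph.

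\emph{The complements.} For these I will not trace the graph directly. Instead I use combinatorial Alexander duality on the simplicial $2$-sphere $L$, which gives
$$
\widetilde H_0(L_{\omega^c};\Q)\cong \widetilde H^{1}(L_\omega;\Q).
$$
It is therefore enough to show that each small $L_\omega$ has no $1$-dimensional rational cohomology, i.e.\ that it is a tree. Since $L$ is flag, $L_\omega$ is the clique complex of its $1$-skeleton, so it suffices to show that each induced graph is acyclic.
\begin{itemize}
\item $\{1,6\}$ is a single edge.
\item $\{0,4,6\}$ is acyclic once we check that $0$--$4$ is not an edge.
\item $\{1,5,7\}$ is acyclic once we check that $5$--$7$ is not an edge; the only facets containing $5$ are $(1,2,5),(1,3,5),(2,4,5),(3,4,5)$.
\item $\{0,4,5,7\}$ has the three edges $0$--$7$, $4$--$7$, $4$--$5$ on four vertices, so it is a tree provided $0$--$4$, $0$--$5$ and $5$--$7$ are all absent.
\end{itemize}
All four induced subcomplexes are then trees, so the four complements, which are the remaining $P_g$'s, are connected.

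The only step that needs real care is confirming these non-edges against the facet list. Both the symmetry and the duality reduction are formal.
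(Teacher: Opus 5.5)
Your proof is correct, and it takes a partly different route from the paper. I checked the non-edges you rely on against the facet list: $0$ is adjacent only to $6,7,8,9$, and $5$ only to $1,2,3,4$, so $0$--$4$, $0$--$5$ and $5$--$7$ are indeed absent.

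The paper's proof is a plain direct inspection. It exhibits edges for the small sets $\{1,6\}$, $\{0,4,6\}$ and $\{1,5,7\}$. It pairs $N_0$ with $P_7$, and so on, simply by reading the table. It disposes of the large sets by saying they contain one of these connected sets plus adjacent vertices, or "are checked directly"; the latter covers, for instance, $P_0$ and $\{0,4,5,7\}$.

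You differ in two ways.
\begin{itemize}
\item You derive the pairing $N_g=P_{g+\mathbf t}$ and the complementarity $N_g=P_g^c$ structurally, from $c(g+\mathbf t,i)=-c(g,i)$ and $c\neq 0$. This halves the work and explains the shape of the table.
\item You replace the connectivity check on the six- to eight-vertex sets by Alexander duality on the $2$-sphere, $\widetilde H_0(L_{\omega^c};\Q)\cong\widetilde H^1(L_\omega;\Q)$. After that, only acyclicity of four tiny induced graphs, i.e.\ a handful of non-edges, must be verified.
\end{itemize}
Your route is more transparent and less error-prone. The paper's route needs no topology, but it leaves part of the case check implicit.

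Two small remarks. Flagness is not actually needed in your duality step: an acyclic induced graph contains no triangle, so $L_\omega$ has no $2$-simplices and equals that forest in any case. Also, ``no $1$-dimensional cohomology, i.e.\ a tree'' overstates an equivalence. You in fact prove the stronger property (a tree), so nothing breaks.
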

\begin{proof}
    This is checked directly from the edge list, cf. Figure~\ref{fig:Q_L}.  
    The smallest cases are $N_0=\{1,6\}$ and $P_7=\{1,6\}$, and $(1,6)$ is an edge of $L$.  
    Similarly, $N_2=\{1,5,7\}$ and $P_5=\{1,5,7\}$ are connected because $(1,5)$ and $(1,7)$ are edges, and $N_1=\{0,4,6\}$ and $P_6=\{0,4,6\}$ are connected because $(0,6)$ and $(4,6)$ are edges.  
    The remaining induced subgraphs contain one of these connected sets together with vertices adjacent to it, or are checked directly from the same edge list.
\end{proof}

\begin{proposition}\label{prop:q10-N-mu-fibered}
    The small cover $N_\mu$ fibers over $S^1$.
\end{proposition}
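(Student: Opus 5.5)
The plan is to apply Proposition~\ref{prop:fibering-criterion-small-cover} to the cochain $c$ defined above. Its hypotheses hold as follows. $L$ is a flag simplicial $2$-sphere by construction. $N_\mu$ is orientable by Lemma~\ref{lem:q10-mu-characteristic}. Conditions (1) and (2) follow from Lemma~\ref{lem:q10-affine-cocycle}: every value $c(g,i)=\pm1$ is nonzero, and opposite edges of each square carry equal values. Condition (3) is exactly Lemma~\ref{lem:q10-connected-links}. Note that each $P_g$ and $N_g$ in the table is nonempty. Connectivity of the induced $1$-skeleton $L^{(1)}_{P_g}$ is the same as connectivity of the induced subcomplex $L_{P_g}$, since connectivity only depends on the $1$-skeleton.

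The one remaining input is that $[c]\colon\pi_1(N_\mu)\to\Z$ has image $d\Z$ with $d>0$, i.e. $[c]$ is nonzero. The image is automatically a subgroup of $\Z$, so only nontriviality needs checking. I would produce an explicit closed edge loop in the cubewise $1$-skeleton with nonzero $c$-sum. For example, follow edges in direction $i$ from $g=0$ and then from $\mu_i$. Since $c(\mu_i,i)=-c(0,i)$, a single back-and-forth loop sums to zero, so a better choice is a loop traversing distinct colors. One candidate is the loop $0\to\mu_1\to 0$, using the edge in direction $0$ and then the edge in direction $1$; both have $\mu=e_1$. The sum is
\[
c(0,0)+c(e_1,1) = (-1)^{0}+(-1)^{1+1} = 2,
\]
which is nonzero.

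Which edges count as closed loops needs some care. In the cubewise model, each vertex $g$ carries an edge in every direction $i$ joining $g$ and $g+\mu_i$. The edges in directions $0$ and $1$ at $g=0$ are distinct edges, both ending at $e_1$. Going out along one and returning along the other (reversed) therefore gives a genuine loop. Its value is $c(0,0)-c(0,1)$, using the orientation-reversal rule from Lemma~\ref{lem:q10-affine-cocycle}. This equals $1-(-1)=2$ because $\epsilon_1=1$ and $\epsilon_0=0$. Hence $[c]\neq 0$, which gives $d\ge1$. Any $d$ works, since Proposition~\ref{prop:fibering-criterion-small-cover} divides by $d$.

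The main obstacle I expect is justifying this last step carefully. I need $c$ to be a genuine cocycle, so that it defines a class and the loop value is a homomorphism value independent of homotopy. This is essentially the content of the affine-on-squares condition. I also need the chosen loop to be a legitimate closed path in $N_\mu$. With the image $d\Z\neq0$ established, Proposition~\ref{prop:fibering-criterion-small-cover} then yields a fibration $N_\mu\to S^1$ representing $\phi=\frac1d[c]$.
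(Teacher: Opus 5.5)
Your proposal is correct and follows the paper's proof essentially verbatim: the same criterion (Proposition~\ref{prop:fibering-criterion-small-cover}), the same supporting lemmas, and the same loop, built from the direction-$0$ and direction-$1$ edges at the vertex $0$, with value $c(0,0)-c(0,1)=2$. The only difference is that the paper also proves $\im[c]=2\Z$ exactly, because $c \bmod 2$ is the coboundary of $g\mapsto g_1+g_2+g_3$, which identifies the fibered class as $\tfrac12[c]$; you instead observe, correctly, that nontriviality of $[c]$ already suffices for the existence of a fibration, since the criterion accepts any $d>0$.
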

\begin{proof}
    By Lemma~\ref{lem:q10-mu-characteristic}, $N_\mu$ is orientable.
    Lemma~\ref{lem:q10-affine-cocycle} shows that $c$ is a nonzero cubewise affine integral $1$-cocycle.
    Also, Lemma~\ref{lem:q10-connected-links} shows that all ascending and descending links are nonempty and connected.

    It remains only to compute the image of $[c]\colon \pi_1(N_\mu)\to \Z$.
    We claim that $\im[c]=2\Z$.
    Modulo $2$, the cochain $c$ takes value $1$ on every oriented edge.
    Since every column of $\mu$ is a standard basis vector, this mod~$2$ cochain is the coboundary of $g\longmapsto g_1+g_2+g_3$.
    Hence $[c]$ takes an even value on every cycle.

    On the other hand, since $\mu_0=\mu_1=e_1$, the difference of the two edges in directions $0$ and $1$ at the vertex $0$ is a cycle $\gamma$.
    By the definition of $c$,
    $$
        \langle c,\gamma\rangle=c(0,0)-c(0,1)=2.
    $$
    Therefore $\im[c]=2\Z$.

    Now Proposition~\ref{prop:fibering-criterion-small-cover}, applied with $d=2$, shows that the primitive class
    $$
        \phi=\frac{1}{2}[c]\in H^1(N_\mu;\Z)
    $$
    is represented by a fibration $N_\mu\to S^1$.
\end{proof}

Let $I$ be an interval with facets $w_0,w_1$.  
Define a characteristic function $\lambda\colon\cF(I\times Q)\to \Z_2^4$ by
$$
    \lambda(w_0)=\lambda(w_1)=e_0 \quad \text{ and } \quad \lambda(u_i)=(0,\mu_i) \quad (i=0,\ldots,9),
$$
where $e_0=(1,0,0,0)$ and $(0,\mu_i)\in \Z_2\oplus \Z_2^3$.  
This is the product characteristic function associated with the interval characteristic function $\varepsilon(w_0)=\varepsilon(w_1)=1$ and the characteristic function $\mu$ on $Q$.

\begin{theorem}\label{thm:q10-product-compatible-example}
    The small cover $M(I\times Q,\lambda)$ is symplectic. 
    Moreover, $I\times Q$ is not combinatorially equivalent to a product of two polygons.
\end{theorem}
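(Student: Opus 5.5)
The plan is to identify $M(I\times Q,\lambda)$ with a product $S^1\times N_\mu$ and then apply Thurston's construction to the fibration of Proposition~\ref{prop:q10-N-mu-fibered}. The combinatorial claim will be handled separately by a face count.

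\textbf{Step 1: product decomposition.} The characteristic matrix $\lambda$ is block diagonal. Its first block is the $1\times 2$ matrix $(1\ 1)$ on the facets $w_0,w_1$ of $I$, and its second block is $\mu$ on the facets of $Q$. Hence $\ker\lambda=\ker\varepsilon\oplus\ker\mu$ inside $\Z_2^2\oplus\Z_2^{10}$. Moreover $\RZ_{\partial(I\times Q)^\ast}\cong \RZ_{\partial I^\ast}\times\RZ_L$, and this splitting is compatible with the two kernels. Therefore
$$
    M(I\times Q,\lambda)\cong \RZ_{\partial I^\ast}/\ker\varepsilon\times \RZ_L/\ker\mu .
$$
Here $\RZ_{\partial I^\ast}\cong S^1$ is the boundary of the square $[-1,1]^2$, and $\ker\varepsilon=\langle(1,1)\rangle$ acts on it by the antipodal map. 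This action is free, so the first factor is again a circle. We conclude $M(I\times Q,\lambda)\cong S^1\times N_\mu$. Alternatively, the general fact that a small cover with block-diagonal characteristic matrix over a product polytope is the product of the small covers gives the same conclusion. Lemma~\ref{lem:q10-mu-characteristic} shows that $\lambda$ is characteristic, since at each vertex the columns are $e_0$ together with a basis of $0\oplus\Z_2^3$.

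\textbf{Step 2: symplecticity.} By Proposition~\ref{prop:q10-N-mu-fibered}, $N_\mu$ is a closed orientable $3$-manifold fibering over $S^1$. Thurston's theorem~\cite{Thurston1976} then makes $S^1\times N_\mu$ symplectic. Concretely, let $p\colon N_\mu\to S^1$ be the fibration. We choose a closed $1$-form $\alpha=p^\ast d\theta$ and a closed $2$-form $\beta$ on $N_\mu$ that restricts to an area form on each fiber, which exists by Thurston's argument since fibers are nontrivial in homology. For a vector field $X$ transverse to the fibers with $\alpha(X)=1$, we may rescale so that $\iota_X\beta=0$ after modifying $X$ to lie in $\ker\beta$. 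Then $\Omega=dt\wedge\alpha+\beta$ is nondegenerate on $S^1\times N_\mu$. The only delicate input is the existence of $\beta$, which is the content of Thurston's construction, and I would simply cite it. Note that this argument does not need Proposition~\ref{prop:canonical-cover-obstruction} or any invariance consideration, because it produces the form on the small cover directly.

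\textbf{Step 3: not a product of polygons.} Suppose $I\times Q$ were combinatorially equivalent to $P_{m_1}\times P_{m_2}$. Counting facets gives $m_1+m_2=12$, since $Q$ has $10$ facets. Counting vertices gives $m_1m_2=2f_0(Q)=2\cdot 16=32$. The roots of $t^2-12t+32$ are $4$ and $8$, so the only candidate is $P_4\times P_8$. To rule it out, I would compare facet types. The facets of $P_4\times P_8$ are four copies of $P_4\times I$ (squares times interval) and eight copies of $I\times P_8$. The facets of $I\times Q$ are two copies of $Q$ together with $I\times F$ for the ten facets $F$ of $Q$, which are six square prisms and four hexagonal prisms. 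Since $Q$ has $10$ facets, it is not a prism over an octagon, so these facet multisets differ. Equivalently, by the paper's indecomposability remark, $\partial(I\times Q)^\ast=S^0\join L$, and $L$ is not a join $S^0\join\partial P_8^\ast$ because no vertex of $L$ has degree $8$. I expect Step 1, namely checking that the identification with $S^1\times N_\mu$ holds as smooth manifolds, to be the only mildly technical point; the rest is citation and counting.
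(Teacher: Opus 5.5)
Your Steps 1 and 2 are the paper's argument. Because $\lambda$ is the product characteristic function, $M(I\times Q,\lambda)\cong S^1\times N_\mu$. Thurston's construction applied to the fibration of Proposition~\ref{prop:q10-N-mu-fibered} then gives the symplectic form. Your extra step choosing $X\in\ker\beta$ is harmless but unnecessary: $\Omega^2=2\,dt\wedge\alpha\wedge\beta$ is a volume form as soon as $\beta$ is nondegenerate on the fibres $\ker\alpha$.

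For the combinatorial claim, the paper simply appeals to indecomposability of $Q$, implicitly using uniqueness of product decompositions. Your face-counting route avoids both facts and is more self-contained, but one step is wrong as written. The sentence ``since $Q$ has $10$ facets, it is not a prism over an octagon'' is false: $I\times P_8$ has exactly $10$ facets and $16$ vertices.
\begin{itemize}
\item Every simple $3$-polytope with $10$ facets has $2\cdot 10-4=16$ vertices. So your vertex count adds nothing to the facet count.
\item The case $I\times(I\times P_8)=P_4\times P_8$ is exactly the one the counting cannot exclude.
\item What does exclude it is the facet shapes you already recorded. The facets of $Q$ are six squares and four hexagons, so neither $Q$ nor any $I\times F$ is an octagonal prism.
\item By contrast, $P_4\times P_8$ has four octagonal-prism facets $E\times P_8$, one for each edge $E$ of $P_4$, and eight cubes $P_4\times E'$. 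You had these multiplicities swapped.
\end{itemize}

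Your dual variant also needs repair. Passing from $S^0\join L\cong S^0\join S^0\join\partial P_8^\ast$ to $L\cong S^0\join\partial P_8^\ast$ is a cancellation. It requires uniqueness of join decompositions, which holds for flag complexes via the components of the complementary graph of the $1$-skeleton, but you did not invoke it. It is easier to compare the vertex degrees of the full complexes directly:
\begin{itemize}
\item In $S^0\join L$ the degrees are $10$ twice, $8$ four times and $6$ six times.
\item In $\partial P_4^\ast\join\partial P_8^\ast$ they are $10$ four times and $6$ eight times.
\end{itemize}
With this correction your proof is complete.
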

\begin{proof}
    Since $\lambda$ is the product characteristic function, we have 
    $$
        M(I\times Q,\lambda)\cong M(I,\varepsilon)\times M(Q,\mu)\cong S^1\times N_\mu.
    $$
    By Proposition~\ref{prop:q10-N-mu-fibered}, the $3$-manifold $N_\mu$ fibers over $S^1$.  
    Thurston's construction for surface bundles over the circle then implies that $S^1\times N_\mu$ admits a symplectic form.  
    Hence $M(I\times Q,\lambda)$ is symplectic.
\end{proof}

\appendix
\section{Computer-assisted verification up to ten facets}
\label{sec-four-dimensional-up-to-ten}

We include a computer-assisted verification for four-dimensional symplectic small covers whose orbit polytopes have at most ten facets.
The example constructed in Theorem~\ref{thm:q10-product-compatible-example} is not covered by this section, since $I \times Q$ has $10+2=12$ facets.

We now discuss the obstructions to symplecticity used in the complete computer-assisted enumeration.
Labels for a simplicial sphere such as \texttt{lutz\_m9\_0057} are used only as identifiers for comparison with Lutz's lists of triangulated manifolds~\cite{LutzManifoldPage3Manifolds,Lutz1999book}.
Each example is given self-containedly by listing the facets of the dual simplicial~complex and the columns of the characteristic matrix.
For brevity, we write $0123$ for the facet $\{0,1,2,3\}$.

Throughout this section, a column of a characteristic matrix over $\Z_2$ may be written as an integer using its binary expansion.
We encode a column vector $(\xi_1,\xi_2,\xi_3,\xi_4)^T \in \Z_2^4$ by the integer $\xi_1 + 2 \xi_2 + 4 \xi_3 + 8 \xi_4$.
For example, the columns in Example~\ref{ex-csymp-not-factor-compatible-P5P4} are
$$
    [1,2,1,2,7,4,8,4,8].
$$

For simplicial spheres with a few vertices, only a few simplices are flag.
We first apply the immediate consequences of Corollary~\ref{cor:only_flag_support_symplectic_small}.
For $m\leq 8$, exactly one is flag.
It is $\partial (I^4)^\ast$.
Hence every small cover over this support is a real Bott manifold.
In particular, every c-symplectic example in this case is factor-compatible.
By Theorem~\ref{thm-two-polygons-factor-compatible}, these examples are symplectic.
Thus no additional obstruction is needed in the $m=8$ case.

For $m=9$, $\partial (P_4 \times P_5)^\ast$ is the only flag simplicial $3$-sphere with nine vertices.
By Theorem~\ref{thm-two-polygons-factor-compatible} and Lemma~\ref{lem:odd-factor-not-compatible}, they do not support symplectic small covers.

For $m=10$, among $247,882$ simplicial $3$-spheres, there are only three flag simplicial $3$-spheres: \texttt{lutz\_m10\_247860}, \texttt{lutz\_m10\_247882} and \texttt{lutz\_m10\_247880}.
The $3$-sphere \texttt{lutz\_m10\_247860} is $\partial(P_4 \times P_6)^\ast$.
Hence, every symplectic small cover over it is factor-compatible by Theorem~\ref{thm-two-polygons-factor-compatible}.
The $3$-sphere \texttt{lutz\_m10\_247882} is indecomposable $3$-sphere with facets
$$
    \begin{gathered}
        0123, 0124, 0135, 0146, 0156, 0237, 0247, 0357, 0467, 0567, 1238, 1248, 1358,\\
        1468, 1568, 2379, 2389, 2479, 2489, 3579, 3589, 4679, 4689, 5679, 5689.
    \end{gathered}
$$
The sphere \texttt{lutz\_m10\_247882} has $25$ facets. 
Hence it cannot support a symplectic small cover by Proposition~\ref{prop-wu-obstruction}.

The remaining $3$-sphere $K$=\texttt{lutz\_m10\_247880} is $\partial (I \times Q')$ where $Q'$ is a simple polytope obtained from a pentagonal prism by an edge cut.
Its facets are
$$
    \begin{gathered}
        0123, 0124, 0135, 0146, 0156, 0237, 0247, 0357, 0468, 0478, 0568, 0578,\\
        1239, 1249, 1359, 1469, 1569, 2379, 2479, 3579, 4689, 4789, 5689, 5789.
    \end{gathered}
$$

\begin{proposition}\label{prop-adjunction-virtual-betti}
    Let $M = \RZ_K/G$ be an oriented $4$-dimensional small cover.
    Assume that $H_2(M;\Q)$ is spanned by the homology classes of embedded tori of self-intersection zero.    
    If $b_1(\RZ_K)>4$, then $M$ is not symplectic.
\end{proposition}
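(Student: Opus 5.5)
The plan is to argue by contradiction: assuming $M$ is symplectic, we show that its symplectic canonical class is rationally trivial, then pass to the finite cover $\RZ_K$ and apply Bauer's theorem~\cite{Bauer2008}, which bounds $b_1$ for symplectic four-manifolds with torsion first Chern class. So suppose $M$ carries a symplectic form $\Omega$, choose a compatible almost complex structure $J$, and let $K_\Omega=-c_1(TM,J)$ be the symplectic canonical class.

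\textbf{Step 1: adjunction on the tori.} Let $T\subset M$ be an embedded torus with $[T]^2=0$ representing a non-torsion class. We want $K_\Omega\cdot[T]=0$. By the proof of Proposition~\ref{prop-wu-obstruction}, $\sigma(M)=0$, so $b_2^+(M)=b_2(M)/2$.
\begin{itemize}
\item If $b_2^+(M)>1$, Taubes' non-vanishing theorem together with the Seiberg--Witten adjunction inequality~\cite{Taubes1994, Ozsvath-Szabo2000} gives $0=2\cdot 1-2\ge [T]^2+|K_\Omega\cdot[T]|$. Hence $K_\Omega\cdot[T]=0$, exactly as in the proof of Proposition~\ref{prop-product-factor-reversal}.
\item If $b_2^+(M)=1$, the wall-crossing phenomenon means the plain adjunction inequality is not automatic. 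Here we use Lemma~\ref{lem:rational-ruled-exclusions}(1): an orientable small cover is neither rational nor ruled. For symplectic four-manifolds that are not rational or ruled, the Li--Liu refinement of the adjunction inequality still holds for embedded surfaces of positive genus and nonnegative self-intersection. This again forces $K_\Omega\cdot[T]=0$.
\end{itemize}
If $b_2(M)=0$, there is nothing to prove, but that case is in any event excluded because $[\Omega]^2\neq 0$.

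\textbf{Step 2: $K_\Omega$ is torsion.} By hypothesis, the classes of such tori span $H_2(M;\Q)$. Step 1 then shows that $K_\Omega$ pairs trivially with all of $H_2(M;\Q)$. By the universal coefficient theorem, $K_\Omega$ is torsion in $H^2(M;\Z)$, so $c_1(TM,J)$ is torsion.

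\textbf{Step 3: pass to the cover.} Let $\pi\colon \RZ_K\to M$ be the covering map, which is finite by Proposition~\ref{prop:canonical-cover-obstruction}. Then $\pi^\ast\Omega$ is a symplectic form on $\RZ_K$, and $\pi^\ast J$ is compatible with it. Moreover, $c_1(T\RZ_K,\pi^\ast J)=\pi^\ast c_1(TM,J)$ is torsion. Bauer's theorem~\cite{Bauer2008} states that a closed symplectic four-manifold with torsion canonical class has the rational homology of a K3 surface, an Enriques surface, or a $T^2$-bundle over $T^2$. In particular its first Betti number is at most $4$, so $b_1(\RZ_K)\le 4$. This contradicts the assumption $b_1(\RZ_K)>4$, and hence $M$ is not symplectic.

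\textbf{Main obstacles.} I expect the delicate point to be Step 1 when $b_2^+(M)=1$. There one must invoke the Li--Liu version of the adjunction inequality for non-rational, non-ruled symplectic manifolds, and check that it applies to square-zero tori in the form needed, namely $|K_\Omega\cdot[T]|\le 0$. A secondary check is the precise form of Bauer's corollary: that it indeed gives $b_1\le 4$ (and not only $b_2^+\le 3$) under the torsion hypothesis on $c_1$.
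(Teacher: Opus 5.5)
Your proposal is correct and follows essentially the same route as the paper. Adjunction on the square-zero tori, together with the fact that small covers are neither rational nor ruled (which is what your Li--Liu $b_2^+=1$ case spells out), kills the canonical class rationally; its pullback to $\RZ_K$ is then torsion, and the Bauer--Li bound $b_1\le 4$ for symplectic Calabi--Yau surfaces gives the contradiction. Your secondary worry is harmless: torsion $c_1$ gives $2\chi+3\sigma=0$, hence $b_2^-=4-4b_1+5b_2^+\ge 0$, so Bauer's $b_2^+\le 3$ already forces $b_1\le 4$ (the paper cites this bound via Friedl--Vidussi).
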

\begin{proof}
    Suppose that $M$ is symplectic.
    By Lemma~\ref{lem:rational-ruled-exclusions}, $M$ is neither rational nor ruled.
    The adjunction inequality applied to the spanning square-zero tori would then force the symplectic canonical class to vanish in $H^2(M;\Q)$.
    The pullback of the canonical class vanishes in $H^2(\RZ_K;\Q)$.
    Hence it is torsion in integral cohomology.
    Thus $\RZ_K$ would be a symplectic Calabi--Yau $4$-manifold in the sense that its canonical class is torsion.
    By the Bauer--Li restriction, as formulated for example in~\cite{Friedl-Vidussi2013}, a symplectic Calabi--Yau $4$-manifold with positive first Betti number satisfies
    $$
        b_1(\RZ_K)\le 4.
    $$
    This contradicts the hypothesis.
\end{proof}
\begin{example}\label{ex-virtual-betti}
    Let $K$ be the simplicial~$3$-sphere labelled \texttt{lutz\_m10\_247880}.
    Take
    $$
        \Lambda=[1,2,4,8,14,14,4,2,8,1].
    $$
    It is straightforward to check that $\Lambda$ satisfies the characteristic condition.
    The corresponding small cover is orientable and has $b_2(M)=2$. 
    Hence it is c-symplectic by Proposition~\ref{prop-four-manifold-csymp}.
    On the other hand, $b_1(\RZ_K)=32>4$.
    
    A direct cellular computation shows that $H_2(M;\Q)$ is spanned by embedded square-zero tori.
    Therefore Proposition~\ref{prop-adjunction-virtual-betti} applies, and $M$ admits no symplectic structure.
\end{example}

By a direct computation, $K$ supports exactly $100$ c-symplectic characteristic matrices up to D--J~equivalence.
For each of them, the homology group $H_2(M;\Q)$ is spanned by embedded square-zero tori.
Since $b_1(\RZ_K)=32$, Proposition~\ref{prop-adjunction-virtual-betti} eliminates all these cases.
Example~\ref{ex-virtual-betti} records one representative case.

Therefore, the following theorem immediately follows.
\begin{theorem}\label{thm-up-to-ten-factor-compatible}
    Let $M=M(P,\lambda)$ be a four-dimensional small cover over a simple $4$-polytope $P$ with at most ten facets.
    If $M$ is symplectic, then $P$ is a product of two polygons and $M$ is factor-compatible.
\end{theorem}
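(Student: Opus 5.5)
The plan is to pass from the simple polytope $P$ to its dual simplicial $3$-sphere $K=\partial P^\ast$ and sort by the number $m\le 10$ of vertices of $K$, using the flagness obstruction to shrink the list to a handful of spheres. By Proposition~\ref{prop:canonical-cover-obstruction} and Theorem~\ref{thm:only_flag_support_symplectic_RZ}, together with Proposition~\ref{prop:triangle-factor-not-csymp}, Corollary~\ref{cor:only_flag_support_symplectic_small} shows that if $M$ is symplectic then $K$ must be flag. A flag simplicial $3$-sphere has at least $8$ vertices, and with exactly $8$ it is the cross-polytope $\partial(I^4)^\ast$. So only $m\in\{8,9,10\}$ remains, and in each case I would use the enumeration of flag spheres recalled above: $\partial(I^4)^\ast$ for $m=8$; $\partial(P_4\times P_5)^\ast$ for $m=9$; and \texttt{lutz\_m10\_247860}, \texttt{lutz\_m10\_247880}, \texttt{lutz\_m10\_247882} for $m=10$.

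For the product supports, the conclusion comes directly from Theorem~\ref{thm-two-polygons-factor-compatible}.
\begin{itemize}
\item $\partial(I^4)^\ast=\partial(P_4\times P_4)^\ast$ and $\partial(P_4\times P_6)^\ast$ are products of two polygons, so any symplectic $M$ over them is factor-compatible, as required.
\item For $P_4\times P_5$, the same theorem says a symplectic $M$ would be factor-compatible. That would force $\chi_1\in\row(\lambda)$ for the pentagon factor, which Lemma~\ref{lem:odd-factor-not-compatible} forbids. Hence no symplectic small cover lives over it, and the statement holds vacuously.
\end{itemize}

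For the two non-product spheres with ten vertices, the task is to rule out symplecticity altogether.
\begin{itemize}
\item \texttt{lutz\_m10\_247882} has $25$ facets, which is not divisible by $4$. Proposition~\ref{prop-wu-obstruction} then excludes every small cover over it.
\item \texttt{lutz\_m10\_247880} is the hard case. Here I would enumerate all characteristic matrices up to D--J equivalence and keep the c-symplectic ones, i.e.\ those with $\one_m\in\row(\lambda)$ and some $\omega\in\row(\lambda)$ with $\widetilde H^1(K_\omega;\Q)\neq0$, as in Proposition~\ref{prop-four-manifold-csymp}; the computation gives $100$. For each of these, I would verify that $H_2(M;\Q)$ is spanned by embedded square-zero tori. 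I would then compute $b_1(\RZ_K)=32>4$ from the Hochster decomposition, which reduces to counting reduced $\widetilde H^0(K_\omega)$. Proposition~\ref{prop-adjunction-virtual-betti} then excludes all $100$ matrices.
\end{itemize}

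The main obstacle is the torus-spanning verification for the $100$ matrices. To carry it out, I would realize candidate tori as coordinate products, namely preimages of square faces of $P$ (for example $\RZ_{K_\omega}$ with $K_\omega$ a $4$-cycle, projected to $M$). I would then check by a cellular or Hochster-weight computation that their classes span $H_2(M;\Q)$, and that each square is zero, in the way Lemma~\ref{lem:coordinate-surface-dual} shows.

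Combining all cases, any symplectic $M$ with at most ten facets lies over a product of two polygons and is factor-compatible.
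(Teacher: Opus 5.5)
Your proposal is correct and follows the paper's argument essentially step for step. It uses the flagness reduction via Corollary~\ref{cor:only_flag_support_symplectic_small}, including the exclusion of $\partial(P_3\times P_{m-3})^\ast$ by Proposition~\ref{prop:triangle-factor-not-csymp}. It then splits into $m=8,9,10$, handling the product supports with Theorem~\ref{thm-two-polygons-factor-compatible} and Lemma~\ref{lem:odd-factor-not-compatible}, \texttt{lutz\_m10\_247882} with Proposition~\ref{prop-wu-obstruction}, and the $100$ c-symplectic matrices over \texttt{lutz\_m10\_247880} with Proposition~\ref{prop-adjunction-virtual-betti} and $b_1(\RZ_K)=32$. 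As in the paper, the fact that $H_2(M;\Q)$ is spanned by square-zero tori in those $100$ cases is a computer check rather than a proof, so your argument is exactly as complete as the paper's on that point.
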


\begin{remark}
    For $m=11$, the Sulanke--Lutz~\cite{Sulanke-Lutz2009} enumeration contains $166,564,303$ simplicial~$3$-spheres.
    Although the list is available at~\cite{LutzManifoldPage3Manifolds}, the number of candidates is too large for the certificate-by-certificate verification carried out in this section.
    We therefore stop the complete enumeration at $m=10$.
\end{remark}

\providecommand{\bysame}{\leavevmode\hbox to3em{\hrulefill}\thinspace}
\providecommand{\MR}{\relax\ifhmode\unskip\space\fi MR }
\providecommand{\MRhref}[2]{%
  \href{http://www.ams.org/mathscinet-getitem?mr=#1}{#2}
}
\providecommand{\href}[2]{#2}

\end{document}